\newtheorem{theorem}{Theorem}[section]
\newtheorem{proposition}[theorem]{Proposition}
\newtheorem{lemma}[theorem]{Lemma}
\newtheorem{TCT}{Toponogov's comparison theorem for open triangles.}
\newtheorem{LTOT}{Lemma on thin open triangles.}
\newtheorem{proof}{\textmd{\textit{Proof.}}}
\newtheorem{remark}[theorem]{Remark}
\newtheorem{example}[theorem]{Example}
\newtheorem{definition}[theorem]{Definition}
\newtheorem{acknowledgement}{\textmd{\textit{Acknowledgements.}}}
\newcommand{\qedd}{\hfill \Box}
\newcommand{\ve}{\varepsilon}
\newcommand{\lra}{\longrightarrow}
\newcommand{\wt}{\widetilde}
\newcommand{\wh}{\widehat}
\newcommand{\ol}{\overline}
\newcommand{\N}{\ensuremath{\mathbb{N}}}
\newcommand{\R}{\ensuremath{\mathbb{R}}}
\newcommand{\Sph}{\ensuremath{\mathbb{S}}}
\newcommand{\cA}{\ensuremath{\mathcal{A}}}
\newcommand{\cD}{\ensuremath{\mathcal{D}}}
\newcommand{\cH}{\ensuremath{\mathcal{H}}}
\newcommand{\cI}{\ensuremath{\mathcal{I}}}
\newcommand{\cP}{\ensuremath{\mathcal{P}}}
\newcommand{\cU}{\ensuremath{\mathcal{U}}}
\newcommand{\cR}{\ensuremath{\mathcal{R}}}
\def\inj{\mathop{\mathrm{inj}}\nolimits}
\def\Cut{\mathop{\mathrm{Cut}}\nolimits}
\def\Focal{\mathop{\mathrm{Foc}}\nolimits}
\def\FC{\mathop{\mathrm{FC}}\nolimits}
\title{Toponogov Comparison Theorem 
for Open Triangles\footnote{
2010 Mathematics Subject Classification. Primary 53C21; Secondary 53C22.}
\footnote{Key words and phrases. 
Cut locus, 
focal locus, 
open triangle, 
radial curvature, 
Riemannian manifold with boundary, 
Toponogov's comparison theorem.
}
}
\author{Kei KONDO \ $\cdot$ \ Minoru TANAKA}
\date{}
\begin{document}%%%%%%%%%%%%%%%%%%%%%%%%%%%%%%%%%%%%%%%%%%%%%%%%%%%
\maketitle

\vspace{-4.7mm}
\begin{center}
{\em \footnotesize in memory of the late professor Detlef Gromoll}
\end{center}
\vspace{3mm}

\begin{abstract}
The aim of our article is to generalize the Toponogov comparison theorem to 
a complete Riemannian manifold with smooth convex boundary. 
A geodesic triangle will be replaced by an open (geodesic) triangle standing on the boundary 
of the manifold, and a model surface will be replaced by the universal covering surface of 
a cylinder of revolution with totally geodesic boundary. 
\end{abstract}
\section{Introduction}\label{sec:int}

Cohn\,-Vossen is one of pioneers in global differential geometry. 
More than seventy years ago, 
he investigated the relationship between the total curvature 
and the Riemannian structure of complete open surfaces. 
He has given big influence to many geometers who research in global differential geometry, 
although he studied only $2$-dimensional manifolds in \cite{CV1} and \cite {CV2}. 
For example, Cohn\,-Vossen proved the following theorem known as a splitting theorem\,: 

\medskip

\begin{theorem}{\rm (\cite[Satz 5]{CV2})}
If a complete Riemannian $2$-manifold has non-negative Gaussian curvature and admits a straight line, then its universal covering space is isometric to Euclidean plane. 
\end{theorem}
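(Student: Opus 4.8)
The plan is to run the Busemann-function proof of the splitting phenomenon, which specializes particularly cleanly to surfaces. Let $\gamma\colon\R\lra M$ be the given straight line, so that $d(\gamma(s),\gamma(t))=|s-t|$ for all $s,t\in\R$, and let $\gamma^{+}(t)=\gamma(t)$, $\gamma^{-}(t)=\gamma(-t)$ for $t\ge 0$ be its two rays. First I would introduce the associated Busemann functions
\[
b_{\pm}(x)=\lim_{t\to\infty}\bigl(d(x,\gamma^{\pm}(t))-t\bigr),
\]
recalling the standard facts that these limits exist, that $b_{\pm}$ are $1$-Lipschitz, that $b_{\pm}(\gamma(s))=\mp s$, and that through every point $x\in M$ there passes a ray asymptotic to $\gamma^{\pm}$, obtained as a limit of minimal geodesics from $x$ to $\gamma^{\pm}(t)$. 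From $d(x,\gamma(t))+d(x,\gamma(-t))\ge d(\gamma(t),\gamma(-t))=2t$ one gets $b_{+}+b_{-}\ge 0$ on all of $M$, with equality everywhere along $\gamma$.

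The curvature hypothesis enters through the Laplacian comparison theorem. Since $\dim M=2$, non-negativity of the Gaussian curvature is exactly $\Ric\ge 0$, and for each $t$ one has $\Delta\,d(\cdot,\gamma^{\pm}(t))\le 1/d(\cdot,\gamma^{\pm}(t))$ wherever this distance function is smooth, and, using the asymptotic rays as upper barriers in Calabi's manner, in the sense of support functions everywhere. Letting $t\to\infty$ the right-hand side tends to $0$, so $b_{+}$, $b_{-}$, and hence $b_{+}+b_{-}$, are superharmonic in the support sense. As $b_{+}+b_{-}$ is a non-negative support-superharmonic function vanishing at the interior points $\gamma(s)$, E.~Hopf's strong minimum principle --- in Calabi's support-function formulation --- forces $b_{+}+b_{-}\equiv 0$. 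Therefore $b:=b_{+}=-b_{-}$ is simultaneously support-superharmonic and support-subharmonic, hence a distributional solution of $\Delta b=0$; by Weyl's lemma $b$ is a genuine $C^{\infty}$ function, and $|\nabla b|\equiv 1$ everywhere (the eikonal property of the Busemann function of a line, now legitimate since $b$ is $C^{1}$).

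The endgame is Bochner rigidity. Applying the Bochner formula to the harmonic function $b$ of unit gradient,
\[
0=\tfrac12\,\Delta|\nabla b|^{2}=|\mathrm{Hess}\,b|^{2}+\langle\nabla(\Delta b),\nabla b\rangle+\Ric(\nabla b,\nabla b)=|\mathrm{Hess}\,b|^{2}+\Ric(\nabla b,\nabla b),
\]
and since $\Ric\ge 0$ both terms must vanish. In dimension two $\Ric(\nabla b,\nabla b)=K\,|\nabla b|^{2}=K$, so the Gaussian curvature is identically zero: $M$ is a complete flat surface, and hence by the Killing--Hopf theorem its universal covering is isometric to the Euclidean plane. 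Equivalently, $\mathrm{Hess}\,b\equiv 0$ exhibits $\nabla b$ as a parallel unit vector field and, by the completeness argument of the Cheeger--Gromoll splitting theorem, identifies $M$ with a Riemannian product $\R\times N$ for a complete $1$-manifold $N$ isometric to $\R$ or to $\Sph^{1}$, from which one reads off the same conclusion.

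The step I expect to be the genuine obstacle is the low regularity of the Busemann functions: a priori $b_{\pm}$ are merely Lipschitz, so the Laplacian comparison inequality, the strong minimum principle applied to $b_{+}+b_{-}$, and the final upgrade to a smooth harmonic function must each be carried out in the barrier / support-function sense, which is precisely Calabi's device and carries essentially all of the analytic weight; once $b$ is known to be $C^{\infty}$, harmonic, with $|\nabla b|\equiv 1$, the Bochner computation and the passage to the universal cover are routine. A thematically closer alternative would be to avoid Busemann functions altogether and apply Toponogov's comparison theorem directly to the long thin triangles with vertices $\gamma(-t)$, $\gamma(t)$, $p$, letting $t\to\infty$ so that the comparison triangles degenerate, which forces all comparison angles to their extremal values and produces through each $p$ a line parallel to $\gamma$ together with flat quadrilaterals exhausting $M$; this works but requires a comparable amount of care, so I would not attempt it first.
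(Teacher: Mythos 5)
The paper does not prove this theorem; it appears only as a citation to Cohn-Vossen's 1936 article, offered as historical motivation for the comparison-geometric program the authors pursue. Your Busemann-function/Bochner argument is correct: the existence of the two limits, the inequality $b_{+}+b_{-}\ge 0$ with equality along $\gamma$, the support-sense Laplacian comparison via Calabi's barrier device, the strong minimum principle forcing $b_{+}+b_{-}\equiv 0$, the elliptic-regularity upgrade to a smooth harmonic $b$ with $|\nabla b|\equiv 1$, and the Bochner rigidity $K\equiv 0$ in dimension two are all sound, and you correctly identify the barrier-function machinery as the place carrying the analytic weight. However, this is not Cohn-Vossen's route, and not what the citation is pointing at: his 1936 proof predates Busemann functions and the use of Bochner's formula in comparison geometry, and is a two-dimensional Gauss--Bonnet/total-curvature argument. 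In outline, on a simply connected complete surface with $K\ge 0$ the line $\gamma$ bounds two geodesic half-planes; applying Gauss--Bonnet to a sequence of geodesic polygons exhausting one side, with two vertices sliding out along $\gamma$, and using that the straightness of $\gamma$ pins down the boundary turning, shows the total curvature on each side is $\le 0$, hence $\int_M K=0$, hence $K\equiv 0$ pointwise, and a complete simply connected flat surface is $\R^{2}$. What each approach buys: Cohn-Vossen's is elementary (no elliptic PDE, no regularity theory) but strictly two-dimensional; yours scales verbatim to all dimensions and $\operatorname{Ric}\ge 0$, being essentially the Cheeger--Gromoll splitting proof -- the very $n$-dimensional generalization the paper attributes to Toponogov in the next sentence. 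The Toponogov-triangle alternative you sketch at the end is actually the closest in spirit to what the present paper does (its own splitting theorems in Sections~9--10 are proved by exactly that kind of degenerating-triangle comparison argument), so it would be the most thematically appropriate proof to insert here, even though the Busemann route is cleaner to execute.
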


\medskip\noindent
Toponogov (\cite{T2}) generalized this splitting theorem for any dimensional complete 
Riemannian manifolds with non-negative sectional curvature 
by making use of the Toponogov comparison theorem (\cite{T1}). 
It is well known that the Toponogov comparison theorem has produced many great classical results, e.g., the maximal diameter theorem by Toponogov (\cite{T1}), 
the structure theorem with positive sectional curvature by Gromoll and Meyer (\cite{GM}), 
and the soul theorem with non-negative sectional curvature by Cheeger and Gromoll (\cite{CG}). 
Besides the Toponogov comparison theorem, some techniques originating from 
Euclidean geometry also play a key role in the references above. 
The techniques such as drawing a circle or a geodesic polygon, 
and joining two points by a minimal geodesic segment are very powerful 
in the comparison geometry. 
Cohn\,-Vossen first introduced such techniques into global differential geometry (see 
\cite {CV1} and \cite{CV2}). 
The Toponogov comparison theorem enables us to make use of such a technique 
in the comparison geometry.

\bigskip

In 2003, Itokawa, Machigashira, and Shiohama generalized the Toponogov comparison theorem 
by means of the radial sectional curvature. 
Their result contains the original Toponogov comparison theorem 
as a corollary (see  \cite[Theorem 1.3]{IMS}). 
The model surface in the original Toponogov comparison theorem is 
a complete $2$-dimensional manifold of constant Gaussian curvature, 
but in \cite{IMS}, the model surface is replaced by a von Mangoldt surface of revolution. 
Here, a von Mangoldt surface of revolution is, by definition, 
a complete  surface of revolution homeomorphic to Euclidean plane 
whose radial curvature function is non-increasing on $[0, \infty)$. 
Very familiar surfaces such as paraboloids or $2$-sheeted hyperboloids are 
typical examples of a von Mangoldt surface of revolution. 
Hence, it is natural to employ a von Mangoldt surface of revolution as a model surface. 
The reason why a von Mangoldt surface of revolution is used 
as a model surface lies in the following property of the surface:

\medskip

\begin{theorem}{\rm (\cite[Main Theorem]{Tn})}
The cut locus of a point on a von Mangoldt surface of revolution is empty 
or a subray of the meridian opposite to the point.
\end{theorem}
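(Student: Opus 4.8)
The plan is to reduce the whole question to the behaviour of a one‑parameter family of symmetric geodesic pairs issuing from the base point, and to isolate the single place where the monotonicity of the radial curvature is used, namely the monotone behaviour of the half‑period function of geodesics.

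Write the metric of the von Mangoldt surface $M\cong\R^2$ as $dr^2+f(r)^2\,d\theta^2$, where $r$ is distance from the pole $p$, $f(0)=0$, $f'(0)=1$, $f>0$ on $(0,\infty)$, and $G(r)=-f''(r)/f(r)$ is non‑increasing. If the base point is $p$ itself, then every geodesic from $p$ is a meridian, distinct meridians meet only at $p$, and the Jacobi field $f(r)$ along a meridian never vanishes because $f>0$; hence no meridian carries a conjugate point or a cut point, so $\Cut(p)=\emptyset$. Assume then that the base point is $q=(a,0)$ with $a>0$, and let $\sigma\colon(r,\theta)\mapsto(r,-\theta)$ be the reflection fixing $q$, so that $\Cut(q)$ is $\sigma$‑invariant. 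I will use the standard facts that on a surface $\Cut(q)$ is a (possibly empty) tree, that every $x\in\Cut(q)$ is joined to $q$ by at least two minimizers unless it is a conjugate point joined to $q$ by a unique one, that the terminal points of the tree are of the latter kind, and that $M\setminus\Cut(q)$ is an open disk; I will also use the Clairaut relation $f(r)^2\dot\theta=\nu$, so that a geodesic from $q$ is determined by the oriented angle $\alpha\in(-\pi,\pi]$ it makes with the outward meridian direction, with $\nu=f(a)\sin\alpha$.

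The core of the argument is a monotonicity statement in which the von Mangoldt hypothesis is indispensable. For $\alpha\in(0,\pi)$ the geodesics $\gamma_\alpha$ and $\gamma_{-\alpha}=\sigma\circ\gamma_\alpha$ form a symmetric pair; when $\gamma_\alpha$ is inward enough to wind around $p$, the pair meets again after $q$, and — since the first common point is fixed by $\sigma$ and a separation argument (the bigon between the two arcs encloses $p$) excludes the near side — that point $m(\alpha)$ lies on the opposite meridian $\{\theta=\pi\}$, with $d(q,m(\alpha))$ equal to the common length $L(\alpha)$ precisely when $m(\alpha)\in\Cut(q)$. Writing the $\theta$‑advance of a geodesic between consecutive tangencies to parallels as $\int \nu\,dr/\big(f(r)\sqrt{f(r)^2-\nu^2}\big)$ and differentiating this half‑period function in $\nu$, a Sturm‑type comparison of Jacobi fields that uses $G$ non‑increasing should show that $\alpha\mapsto m(\alpha)$ is a strictly monotone parametrization of an arc of $\{\theta=\pi\}$, that the symmetric pairs stay minimal up to $m(\alpha)$ without losing minimality prematurely, and that the meridian point $(r,\pi)$ is reached by $\gamma_\alpha$ only after $m(\alpha)$ — so that once a point of the opposite meridian is a cut point, so is every point beyond it. I expect this differentiation‑and‑comparison step to be the main obstacle.

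Granting the monotonicity, I would finish as follows. Suppose some cut point had $\theta\notin\{0,\pi\}$; by $\sigma$‑symmetry and connectedness of the tree $\Cut(q)$ there would then be a terminal cut point $y$ with $\theta(y)\in(0,\pi)$, necessarily a conjugate point of $q$ joined to $q$ by a unique minimizer $\gamma_\alpha$ that (by the surface‑of‑revolution structure) is tangent to a parallel at $y$; comparing $\gamma_\alpha$ with the symmetric pair through neighbouring parallels and invoking the half‑period monotonicity, one finds $\gamma_{\alpha'}$ crossing $\gamma_\alpha$ strictly before $y$ for $\alpha'$ near $\alpha$, contradicting minimality of $\gamma_\alpha$ up to $y$. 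The same mechanism rules out cut points on the outward ray $\{\theta=0,\ r>a\}$ and on the open segment from $q$ to $p$. Hence $\Cut(q)\subseteq\{\theta=\pi\}\cup\{p\}$; since $p$ is not conjugate to $q$ along the meridian it is not a cut point, so $\Cut(q)$ is a closed, connected subset of the opposite ray, and by the monotonicity of $\alpha\mapsto m(\alpha)$ together with the persistence of cut points outward it is either empty or of the form $\{(r,\pi):r\ge b\}$ for some $b>0$ — a subray of the meridian opposite to $q$. Combined with the pole case, this proves the theorem. $\qedd$
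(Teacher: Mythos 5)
Your outline has the right ingredients---reflection symmetry through $q$, the Clairaut relation, the tree structure of cut loci on surfaces, and a comparison argument exploiting monotone radial curvature---and these are indeed the tools that the cited proof in \cite{Tn} rests on. Several of your sub-steps are sound as stated: the pole case (no conjugate points since $f>0$); the observation that the first post-$q$ intersection of $\gamma_\alpha$ with $\sigma\circ\gamma_\alpha$ must lie on the opposite meridian, because $\theta$ is strictly monotone along any geodesic of nonzero Clairaut constant and the fixed-point set of $\sigma$ is the union of the two meridional rays; and the ``persistence outward'' argument that upgrades ``$\Cut(q)$ is contained in the opposite meridian'' to ``$\Cut(q)$ is a subray,'' since once $(b,\pi)$ is a cut point the meridian is no longer minimizing at any $(b',\pi)$ with $b'>b$.

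The central technical step, however, is left as a conjecture: you write that a Sturm-type comparison ``should show'' the required monotonicity of the half-period integral $\int \nu\,dr/\bigl(f\sqrt{f^{2}-\nu^{2}}\bigr)$ in the Clairaut constant and flag it as ``the main obstacle.'' This step carries the entire weight of the theorem and is not automatic: the integrand is singular at the tangent parallel, differentiating under the integral sign in $\nu$ requires regularization, and the quantity to be controlled also involves how the turning radius $\xi(\nu)$ (a solution of $f(\xi)=\nu$) moves with $\nu$---on a von Mangoldt surface $f$ need not be monotone, so $\xi$ need not even be single-valued, and the comparison is not a direct Sturm comparison of two Jacobi equations but a more delicate statement tying $G'\le 0$ to the behaviour of this singular integral. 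Proving this lemma is precisely where the von Mangoldt hypothesis enters, and without it you have an outline rather than a proof. A secondary gap: your appeal to ``a terminal cut point $y$ with $\theta(y)\in(0,\pi)$'' does not follow from connectedness of the cut tree alone, since on a noncompact surface the tree can have unbounded branches with no terminal vertex, and the further claim that such a terminal point would be met tangentially to a parallel is asserted but not argued. Both points need to be filled in before the argument closes.
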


\medskip\noindent
It would be {\bf impossible} to prove \cite[Theorem 1.3]{IMS} for general surfaces of revolution, 
because the cut locus of the surface appears as an obstruction, 
when we draw a geodesic triangle in the model surface. 
For example, the proof of \cite[Lemma 4.10]{KT2} suggests such an obstruction. 
In \cite{KT2}, the present authors very recently generalized \cite[Theorem 1.3]{IMS} 
for a surface of revolution  admitting a sector which has no pair of cut points.

\bigskip

Our purpose in this article is {\em to establish the Toponogov comparison theorem for Riemannian manifolds with convex boundary from the standpoint of the radial curvature geometry.}

\bigskip

Now we will introduce the radial curvature geometry for manifolds with boundary\,: 
We first introduce our model, which will be later employed 
as a reference surface of comparison theorems in complete Riemannian manifolds with boundaries. 
Let 
\[
\wt{M} := (\R, d\tilde{x}^2) \times_{m} (\R, d\tilde{y}^2)
\]
be a warped product of 
two $1$-dimensional Euclidean lines $(\R, d\tilde{x}^2)$ and $(\R, d\tilde{y}^2)$, 
where the warping function $m : \R \rightarrow (0, \infty)$ is a positive smooth function 
satisfying $m(0) = 1$ and $m'(0) = 0$. 
Then we call 
\[
\wt{X} := \left\{ \tilde{p} \in \wt{M} \ ; \ \tilde{x}(\tilde{p}) \ge 0 \right\} 
\]
a {\em model surface}. 
Since $m'(0) = 0$, the boundary 
\[
\partial \wt{X}:= \{ \tilde{p} \in \wt{X}\, ; \, \tilde{x}(\tilde{p}) = 0 \} 
\]
of $\wt{X}$ is {\em totally geodesic}. 
The metric $\tilde{g}$ of $\wt{X}$ is expressed as 
\begin{equation}\label{model-metric}
\tilde{g} = d\tilde{x}^2 + m(\tilde{x})^{2} d\tilde{y}^2
\end{equation}
on $[0, \infty) \times \R$. 
The function $G \circ \tilde{\mu} : [0,\infty) \rightarrow \R$ is called the 
{\em radial curvature function} of $\wt{X}$, 
where we denote by $G$ the Gaussian curvature of $\wt{X}$, 
and by $\tilde{\mu}$ any ray emanating perpendicularly from $\partial \wt{X}$ 
(notice that such $\tilde{\mu}$ will be called a $\partial \wt{X}$-ray). 
Remark that $m : [0, \infty) \rightarrow \R$ satisfies the differential equation 
\[
m''(t) + G (\tilde{\mu}(t)) m(t) = 0
\]
with initial conditions $m(0) = 1$ and $m'(0) = 0$.  
We define a sector
\[ 
\wt{X} (\theta) : = \tilde{y}^{-1} ((0, \theta))
\]
in $\wt{X}$ for each constant number $\theta >0$. 
Since a map $(\tilde{p}, \tilde{q}) \rightarrow (\tilde{p}, \tilde{q} + c)$, $c \in \R$, over $\wt{X}$ is an isometry, 
$\wt{X} (\theta)$ is isometric to $\tilde{y}^{-1} (c, c + \theta)$ for all $c \in \R$.\par
Hereafter, let $(X, \partial X)$ denote a complete Riemannian 
$n$-dimensional manifold $X$ with a smooth boundary $\partial X$. 
We say that $\partial X$ is {\em convex}, 
if all eigenvalues of the shape operator $A_{\xi}$ of $\partial X$ are 
non-negative in the inward vector $\xi$ normal to $\partial X$. 
Notice that our sign of $A_{\xi}$ differs from \cite{S}. 
That is, for each $p \in \partial X$ and $v \in T_{p}\partial X$, 
\[
A_{\xi}(v) = -\,(\nabla_{v} N)^{\top}
\]
holds. 
Here, we denote by $N$ a local extension of $\xi$, 
and by $\nabla$ the Riemannian connection on $X$.\par 
For a positive constant $l$, 
a unit speed geodesic segment $\mu : [0, l] \rightarrow X$ emanating from $\partial X$ 
is called a {\em $\partial X$-segment} if $d(\partial X, \mu(t)) = t$ on $[0, l]$. 
If $\mu : [0, l] \rightarrow X$ is a $\partial X$-segment for all $l > 0$, 
we call $\mu$ a {\em $\partial X$-ray}. 
Here, we denote by $d(\partial X, \, \cdot \, )$ the distance function to $\partial X$ 
induced from the Riemannian structure of $X$. 
Notice that a $\partial X$-segment 
is orthogonal to $\partial X$ by the first variation formula, and so a $\partial X$-ray is too.\par 
For any fixed two points $p, q \in X \setminus \partial X$, an {\em open triangle} 
\[
{\rm OT}(\partial X, p, q):=(\partial X, p, q \, ; \,\gamma, \mu_{1}, \mu_{2})
\]
in $X$ is defined by two $\partial X$-segments $\mu_{i} : [0, l_{i}] \rightarrow X$, $i = 1, 2$, 
a minimal geodesic segment $\gamma : [0, d(p, q)] \rightarrow X$, and $\partial X$ such that
\[
\mu_{1}(l_{1}) = \gamma (0) = p, \quad \mu_{2}(l_{2}) = \gamma (d(p, q)) = q.
\]
In this article, whenever an open triangle 
$
{\rm OT}(\partial X, p, q)=(\partial X, p, q\, ; \,\gamma, \mu_{1}, \mu_{2})
$ 
in $X$ is given, 
\begin{center}
$(\partial X, p, q\, ; \,\gamma, \mu_{1}, \mu_{2})$, as a symbol, 
\end{center}
always means that the minimal geodesic segment 
$\gamma$ is the side opposite to $\partial X$ emanating from $p$ to $q$, and that 
the $\partial X$-segments $\mu_{1}, \mu_{2}$ are sides emanating from $\partial X$ to $p$, $q$, 
respectively.\par 
$(X, \partial X)$ is said to have the 
{\em 
radial curvature $($with respect to $\partial X)$ 
bounded from below by that of $(\wt{X}, \partial \wt{X})$
} 
if, for every $\partial X$-segment $\mu : [0, l) \rightarrow X$, 
the sectional curvature $K_{X}$ of $X$ satisfies
\[
K_{X}(\sigma_{t}) \ge G (\tilde{\mu}(t))
\]
for all $t \in [0, l)$ and all $2$-dimensional linear spaces $\sigma_{t}$ spanned by $\mu'(t)$ 
and a tangent vector to $X$ at $\mu(t)$. 
For example, if the Riemannian metric of $\wt{X}$ is 
$d\tilde{x}^{2} + d\tilde{y}^{2}$, or $d\tilde{x}^2 + \cosh^{2} (\tilde{x})\,d\tilde{y}^{2}$, then 
$G (\tilde{\mu}(t)) = 0$, or $G (\tilde{\mu}(t)) = -1$, respectively. 
Furthermore, {\bf the radial curvature may change signs wildly} 
(e.g., \cite[Example 1.2]{KT1}, \cite{KT3}).

\bigskip

Our main theorem is now stated as follows\,: 

\begin{TCT}Let $(X, \partial X)$ be a complete connected Riemannian 
$n$-dimensional manifold $X$ with smooth convex boundary $\partial X$ 
whose radial curvature is bounded from below by that of 
a model surface $(\wt{X}, \partial \wt{X})$ with its metric $(\ref{model-metric})$. 
Assume that 
$\wt{X}$ admits a sector $\wt{X}(\theta_{0})$ which has no pair of cut points. 
Then, for every open triangle ${\rm OT}(\partial X, p, q) = (\partial X, p, q\,;\,\gamma, \mu_{1}, \mu_{2})$ 
in $X$ with 
\[
d (\mu_{1}(0), \mu_{2}(0)) < \theta_{0},
\]
there exists an open triangle 
$
{\rm OT}(\partial \wt{X}, \tilde{p}, \tilde{q}) 
= 
(\partial \wt{X}, \tilde{p}, \tilde{q}\,;\,\tilde{\gamma}, \tilde{\mu}_{1}, \tilde{\mu}_{2})
$ 
in $\wt{X}(\theta_{0})$ such that
\[
d(\partial \wt{X},\tilde{p}) = d(\partial X, p), \quad 
d(\tilde{p},\tilde{q}) = d(p, q), \quad 
d(\partial \wt{X},\tilde{q}) = d(\partial X, q)
\]
and that
\[
\angle\,p \ge \angle\,\tilde{p}, \quad  
\angle\,q \ge \angle\,\tilde{q}, \quad 
d (\mu_{1}(0), \mu_{2}(0)) \ge d (\tilde{\mu}_{1}(0), \tilde{\mu}_{2}(0)).
\]
Furthermore, if 
\[
d (\mu_{1}(0), \mu_{2}(0)) = d (\tilde{\mu}_{1}(0), \tilde{\mu}_{2}(0))
\] 
holds, then 
\[
\angle\,p = \angle\,\tilde{p}, \quad  \angle\,q = \angle\,\tilde{q}
\]
hold. 
Here $\angle\,p$ denotes the angle between two vectors $\gamma'(0)$ and 
$-\,\mu_{1}'(d(\partial X, p))$ in $T_{p}X$.
\end{TCT}

\bigskip\noindent 
Notice that we do not assume that $\partial X$ is connected in our main theorem. 
Moreover, remark that the opposite side 
$\gamma$ of ${\rm OT}(\partial X, p, q)$ 
does not meet $\partial X$ (Lemma \ref{lem3.10} in Section \ref{sec:Rem}). 
A related result for our main theorem is \cite[Theorem 3.4]{MS} of Mashiko and Shiohama. 
In \cite{MS}, they treat a pair $(M, N)$ of a complete connected Riemannian manifold $M$ 
and a compact connected totally geodesic hypersurface $N$ of $M$ such that 
the radial curvature with respect to $N$ is bounded from below by that of 
the model $((a, b) \times_{m} N, N)$, where $(a, b)$ denotes an interval, in their sense. 
Note that the radial curvature with respect to $N$ is bounded from below by that of our model 
$([0, \infty), d\tilde{x}^2) \times_{m} (\R, d\tilde{y}^2)$, 
if it is bounded from below by that of their model $((a, b) \times_{m} N, N)$. 
Thus, our Toponogov comparison theorem for open triangles is {\bf applicable to} the pair $(M, N)$.\par
We first prove the Toponogov comparison theorem for thin open triangles (see Definition \ref{def3.3} 
for the definition of thin open triangles). The first variation formula 
and some fundamental properties of the second variation formula will play key roles 
when we prove the Toponogov comparison theorem for thin open triangles. 
This new technique gives a new and sophisticated way of the proof of the original Toponogov 
comparison theorem. It was clarified in \cite{KT2} that the Toponogov comparison theorem holds 
for any geodesic triangles, if the Toponogov comparison theorem holds for thin geodesic triangles. 
This is also true for open triangles, if we imitate new techniques developed in \cite{KT2}. 

\bigskip

There are many examples of model surfaces satisfying the assumption in our main theorem. 
For example, it is clear that a model surface $(\wt{X}, \partial \wt{X})$ with the metric 
$d\tilde{x}^{2} + d\tilde{y}^{2}$, or $d\tilde{x}^2 + \cosh^{2} (\tilde{x})\,d\tilde{y}^{2}$ 
has no pair of cut points in a sector $\wt{X}(\theta)$ for each constant $\theta > 0$, respectively. 
Moreover, we have another example of model surfaces which have no pair of cut points in a sector:

\begin{example}\label{exa1.2}
Let  $\wt{M} := (\R, dt^{2}) \times_{m} (\Sph^{1}, d \theta^{2})$ be 
a warped product of a $1$-dimensional Euclidean line $(\R, dt^{2})$ 
and a unit circle $(\Sph^{1}, d \theta^{2})$ satisfying the next three conditions: 
\begin{enumerate}[{\rm ({C--}1)}]
\item
The warping function $m : \R \rightarrow (0, \infty)$ is a smooth even function satisfying 
$m(0) = 1$ and $m'(0) = 0$. 
\item
The radial curvature function $G (\tilde{\mu}(t)) = -m''(t)/ m(t)$ is non-increasing 
on $[0,\infty)$.
\item
$m'(t) \not= 0$ on $\R \setminus \{0\}$.
\end{enumerate}
Tamura (\cite{Tm}) proved that 
{\em the cut locus of a point $\tilde{p} \in \wt{M}$ with $\theta (\tilde{p}) = 0$ is 
the union of the meridian $\theta = \pi$ opposite to $\theta = 0$ and 
a subarc of the parallel $t = -t(\tilde{p})$}. 
Now, we introduce the Riemannian universal covering surface 
$
\wh{M} := \left(
\R \times_{m} \R, d\tilde{x}^2 + m(\tilde{x})^{2} d\tilde{y}^2
\right)
$ 
of $( \wt{M}, d t^{2} + m(t)^{2} d \theta^{2})$. 
It follows from Tamura's theorem above that the half space 
$
\wt{X} 
:= 
\left(
[0, \infty) \times_{m} \R, d\tilde{x}^2 + m(\tilde{x})^{2} d\tilde{y}^2
\right)
$ 
of $\wh{M}$ has no pair of cut points in a sector $\wt{X}(\theta)$ for each constant $\theta > 0$. 
For example, a model surface with the metric 
$d\tilde{x}^{2} + (e^{-\tilde{x}^2})^{2}d\tilde{y}^{2}$ is one of such models.
\end{example}

\medskip

We discuss applications of the Toponogov comparison theorem for open triangles in \cite{KT4}, which 
are splitting theorems of two types. Also, the Toponogov comparison theorem for open triangles 
in a weak form is discussed in the article. 

\bigskip

In the following sections, all geodesics will be normalized, unless otherwise stated.  

\begin{acknowledgement}
We are very grateful to Professor Ryosuke Ichida
for his helpful comments on the very first draft of this article. Finally, 
we would like to express to Professor Detlef Gromoll our deepest gratitude for 
his comment \cite{G} upon our work on radial curvature geometry (\cite{KT1}, \cite{KT2}). 
\end{acknowledgement}

\section{The sketch from Section 3 to Section 8}\label{sec:sch1}
Here, we sketch in the organization from Sections \ref{sec:FCL} to \ref{sec:TCTproof}, 
because we need many lemmas for proving our main theorem.

\bigskip

Throughout this section, let $(X, \partial X)$ be a complete connected Riemannian 
$n$-manifold $X$ with smooth {\bf convex} boundary $\partial X$ 
whose radial curvature is bounded from below by that of a model surface $(\wt{X}, \partial \wt{X})$. 

\bigskip

Our main purpose in Sections \ref{sec:FCL} to \ref{sec:LTOT} is to prove the following lemma, 
which is one of fundamental lemmas to establish the Toponogov comparison 
theorem for open triangles (Theorem \ref{thm4.9})\,:

\begin{LTOT}For every thin open triangle ${\rm OT}(\partial X, p, q)$ in $X$, 
there exists an open triangle 
${\rm OT}(\partial \wt{X}, \tilde{p}, \tilde{q})$ in $\wt{X}$ such that
\[
d(\partial \wt{X},\tilde{p}) = d(\partial X, p), \quad 
d(\tilde{p},\tilde{q}) = d(p, q), \quad 
d(\partial \wt{X},\tilde{q}) = d(\partial X, q)
\]
and that
\[
\angle\,p \ge \angle\,\tilde{p}, \quad  
\angle\,q \ge \angle\,\tilde{q}.
\]
\end{LTOT}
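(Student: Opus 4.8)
The plan is to reduce the comparison statement for a thin open triangle $(\partial X, p, q)$ to a careful analysis of the distance function $d(\partial X,\cdot)$ along the opposite side $\gamma$, and then to reconstruct the model triangle by solving a boundary-value problem in $\wt X$. First I would recall (as proved in the preceding sections, to be cited here) that the two $\partial X$-segments $\mu_1,\mu_2$ and the opposite side $\gamma$ of a thin open triangle all lie in a small tube around a single $\partial X$-segment, so that along $\gamma$ the function $f(s) := d(\partial X,\gamma(s))$ is smooth and controlled. The key differential-geometric input is the Hessian comparison for $d(\partial X,\cdot)$: since $\partial X$ is convex and the radial curvature of $X$ is bounded below by $G\circ\tilde\mu$, the second fundamental form of the level hypersurfaces of $d(\partial X,\cdot)$ is bounded below by $m'(t)/m(t)$ at distance $t$. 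This yields a Riccati/Jacobi comparison which forces $f$ to satisfy a differential inequality of the form $f'' \le$ (an expression built from $m$ evaluated at $f$), together with $|f'|\le 1$; equivalently, writing $t = f(s)$, the quantity $(m\circ f)' $ along $\gamma$ is controlled by the length of $\gamma$ and by $m$.

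Next I would set up the model side. In the sector $\wt X(\theta_0)$ — which by hypothesis has no pair of cut points, so any two of its points are joined by a unique minimal geodesic depending smoothly on endpoints — I would look for $\tilde p,\tilde q$ with $\tilde x(\tilde p)=d(\partial X,p)$, $\tilde x(\tilde q)=d(\partial X,q)$, and with $\tilde y$-coordinates chosen so that the $\wt X$-distance $d(\tilde p,\tilde q)$ equals $d(p,q)$. That the required $\tilde y$-separation exists and is unique is an intermediate-value/monotonicity argument: the map sending the angular gap $\Delta\tilde y$ to $d(\tilde p,\tilde q)$ is continuous, tends to $|\,d(\partial X,p)-d(\partial X,q)\,|$ as $\Delta\tilde y\to 0$ and to $+\infty$ (or at least exceeds $d(p,q)$) as $\Delta\tilde y$ grows, using $d(p,q)\ge |d(\partial X,p)-d(\partial X,q)|$, which itself follows from $|f'|\le 1$. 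This produces the model triangle $(\partial\wt X,\tilde p,\tilde q)=(\tilde\gamma,\tilde\mu_1,\tilde\mu_2)$ with the three prescribed lengths.

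With both triangles in hand, the angle comparison $\angle p\ge\angle\tilde p$, $\angle q\ge\angle\tilde q$ is where the curvature bound is spent. I would compare $\gamma$ against $\tilde\gamma$ via the first variation of arclength: $\cos\angle p = -f'(0)$ and $\cos\angle\tilde p = -\tilde f'(0)$ where $\tilde f = \tilde x\circ\tilde\gamma$, so it suffices to show $f'(0)\le \tilde f'(0)$ (and symmetrically at $q$). Here the model quantity $\tilde f$ satisfies the corresponding ODE with equality, $(m\circ\tilde f)'' = \dots$, because $\wt X$ is exactly the warped product; comparing the two via a Sturm-type argument — using that $f$ and $\tilde f$ share the boundary values $f(0)=\tilde f(0)$, $f(L)=\tilde f(L)$ where $L=d(p,q)$, and that $f$ is a subsolution while $\tilde f$ solves the equation — gives the endpoint derivative inequality in the right direction. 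The equality discussion (when $d(\mu_1(0),\mu_2(0)) = d(\tilde\mu_1(0),\tilde\mu_2(0))$) follows from the rigidity in the Sturm comparison: equality of the boundary separations forces $f\equiv\tilde f$, hence equality of all angles.

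The main obstacle I anticipate is not any single inequality but the bookkeeping that makes the comparison legitimate: one must know that $f=d(\partial X,\gamma(\cdot))$ is genuinely smooth along all of $\gamma$ (no cut points of $\partial X$ are hit, i.e.\ $\gamma$ stays in the domain where $d(\partial X,\cdot)$ is smooth), and one must transport the absence of cut points in $\wt X(\theta_0)$ into the smooth dependence of $\tilde\gamma$ on its endpoints and into the monotonicity of $\Delta\tilde y\mapsto d(\tilde p,\tilde q)$. Both of these are exactly the places where ``thin'' and where the hypothesis on the sector are essential, and I expect the bulk of the preceding sections' lemmas to be marshalled here; granting them, the ODE comparison itself is the clean core of the argument.
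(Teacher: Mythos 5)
Your route is genuinely different from the paper's, and unfortunately it has two real gaps beyond bookkeeping.

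The paper does not estimate the Hessian of $d(\partial X,\cdot)$ along $\gamma$ at all. It proves the Key Lemma (Lemma \ref{lem2.6}): for a $\partial X$-segment $\mu$ to $p$ and a geodesic variation $\varphi(t,s)$ whose longitudinal curves are $\partial X$-segments, the length $L(s)$ of $\varphi_s$ is dominated by the model length $\wt L_\lambda(s)$ for small $s$, by comparing index forms $\cI^\ell_{\partial X}$ via the second variation formula and the Warner comparison theorem (Lemmas \ref{lem2.5.new1}, \ref{lem2.5}), exploiting convexity of $\partial X$. This local estimate, combined with the adjacency/gluing Lemma \ref{lem3.2}, feeds a $\max S$ open--closed induction along $\gamma$ (Lemma \ref{lem3.5}), followed by two approximation steps (Lemmas \ref{lem3.6}, \ref{lem3.7}, using Lemma \ref{lem2.2} on $\FC(\partial X)$) and the limit $\lambda\downarrow 0$ to obtain Lemma \ref{lem3.8}. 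Nowhere is a second-order ODE for $f(s)=d(\partial X,\gamma(s))$ used.

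Two concrete gaps in your Sturm-comparison scheme. First, the differential inequality you get is $f''\le\frac{m'(f)}{m(f)}\,(1-f'^2)$ while the model satisfies equality; having $f(0)=\tilde f(0)$, $f(L)=\tilde f(L)$ does \emph{not} by itself give $f\ge\tilde f$. The maximum-principle argument needs the forcing $\Phi(u,v)=\frac{m'(u)}{m(u)}(1-v^2)$ to be monotone (nondecreasing) in $u$, and that fails in general: $(m'/m)'=-G-(m'/m)^2$, and the paper explicitly allows $G$ to change signs wildly, so $m'/m$ can decrease. This is precisely the obstruction that makes Toponogov-type theorems for non-constant-curvature models hard, and it is why the paper replaces the distance ODE by a variational comparison of index forms. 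Second, smoothness of $f$ along $\gamma$ is not available. Nothing in Definition \ref{def3.3} keeps $\gamma$ off $\Cut(\partial X)$, where $d(\partial X,\cdot)$ is not even $C^1$. The lemmas you might hope to marshal control only the \emph{focal cut locus} $\FC(\partial X)=\Focal(\partial X)\cap\Cut(\partial X)$, which is $(n-2)$-dimensional (Lemma \ref{lem2.1}); $\Cut(\partial X)$ itself may fill out codimension one, and the approximation in Lemma \ref{lem2.2} keeps $\gamma_i$ away from $\FC(\partial X)$, not from $\Cut(\partial X)$. Making your Hessian estimate run through the cut locus would need a separate barrier/support-function argument you have not supplied.

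Two smaller points: (i) your statement ``it suffices to show $f'(0)\le\tilde f'(0)$'' has the sign reversed (you need $f'(0)\ge\tilde f'(0)$ for $\angle p\ge\angle\tilde p$; your envisioned conclusion $f\ge\tilde f$ would in fact give the correct direction, so this is a slip, but flag it), and (ii) you invoke the sector $\wt X(\theta_0)$, which is a hypothesis of the main theorem but not of this lemma; what controls cut-locus trouble in the model here is condition (TOT--2), $L(\gamma)<\inj(\tilde q_s)$.
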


\bigskip

Thin open triangles are defined as follows:

\begin{definition}\label{def3.3}{\bf (Thin open triangle)}
An open triangle ${\rm OT}(\partial X, p, q)$ 
in $(X, \partial X)$ is called a {\em thin open triangle}, if 
\begin{enumerate}[{\rm ({TOT--}1)}]
\item
the opposite side $\gamma$ of ${\rm OT}(\partial X, p, q)$ to $\partial X$ 
emanating from $p$ to $q$ is contained in a normal convex neighborhood 
in $X \setminus \partial X$, and
\item
$L(\gamma) < \inj (\tilde{q}_{s})$ for all $s \in [0, d(p, q)]$,
\end{enumerate}
where $L(\gamma)$ denotes the length of $\gamma$, 
and $\tilde{q}_{s}$ denotes a point in $\wt{X}$ with 
\[
d(\partial \wt{X}, \tilde{q}_{s}) = d(\partial X, \gamma (s))
\]
for each $s \in [0, d(p, q)]$. 
\end{definition}

\bigskip\noindent
Here, the {\em injectivity radius} $\inj (\tilde{p})$ of a point $\tilde{p} \in \wt{X}$ 
is the supremum of $r > 0$ such that, for any point $\tilde{q} \in \wt{X}$ 
with $d(\tilde{p}, \tilde{q}) < r$, 
there exists a unique minimal geodesic segment joining $\tilde{p}$ to $\tilde{q}$. 
Remark that, for each point $\tilde{p} \in \wt{X} \setminus \partial \wt{X}$, 
the inequality 
$\inj(\tilde{p}) > d(\partial \wt{X}, \tilde{p})$ holds, 
if $\tilde{p}$ is sufficiently close to $\partial \wt{X}$.\par
Hence, Sections \ref{sec:FCL} and \ref{sec:LGV} are set up to prove 
Lemma on thin open triangles (Lemma \ref{lem3.8})\,: 
In Section \ref{sec:FCL}, we investigate the relationship between 
minimal geodesic segments in a complete connected Riemannian manifold $X$ 
with smooth boundary $\partial X$ and 
the focal cut locus of $\partial X$ (Lemma \ref{lem2.2}). 
Section \ref{sec:LGV} is the {\bf heart} of this article, i.e., 
we have Key Lemma (Lemma \ref{lem2.6}) of this article. 
Here Lemma \ref{lem2.6} is a comparison theorem of the Rauch type on length 
of $\partial X$-segments in variations of a $\partial X$-segment. 
We also have a {\bf rare} application of the Warner comparison theorem in the proofs of Lemmas 
\ref{lem2.5.new1} and \ref{lem2.5}. 
Notice that Lemmas \ref{lem2.2} and \ref{lem2.6} are indispensable for us to prove 
Lemma on thin open triangles.\par
In Section \ref{sec:LTOT}, we prove Lemma on thin open triangles, 
using Lemmas \ref{lem2.2} and \ref{lem2.6}.\par 
In Section \ref{sec:Rem}, we see, without curvature assumption, 
that the opposite side of any open triangle to $\partial X$ does not meet $\partial X$, 
if $\partial X$ is convex.\par
In Section \ref{sec:AC}, we establish the Alexandrov convexity (Lemma \ref{lem4.4}). 
In the proof of Lemma \ref{lem4.4}, 
we may understand that it is a very important property that 
the opposite side of an open triangle to the boundary in a model surface is 
unique (i.e., we can not prove the equation (\ref{lem4.4-1}) in the proof of Lemma \ref{lem4.4} 
without this property). 
In order to prove Lemma \ref{lem4.4}, we have to treat a non-differentiable Lipschitz function. 
It follows from Dini's theorem (\cite{D}) that, for any Lipschitz function $f$ on $[a, b]$, 
$f$ is differentiable almost everywhere, and 
\[
\int_{a}^{b} f'(t) dt = f(b) -f(a)
\]
holds. Note that the Cantor-Lebesgue function $g$ on $[0, 1]$ is differentiable almost everywhere, 
but 
\[
0 = \int_{0}^{1} g'(t) dt < g(1) -g(0) = 1.
\]
Cohn\,-Vossen applied in \cite{CV1} and \cite{CV2} 
the properties above to global differential geometry.\par
In Section \ref{sec:TCTproof}, we prove our main theorem, 
the Toponogov comparison theorem for open triangles, 
by using new techniques established in \cite[Section 4]{KT2} and 
Lemma \ref{lem4.4}. 

\section{The focal cut locus of $\partial X$}\label{sec:FCL}

Our purpose of this section is to investigate the relationship 
between minimal geodesic segments in a complete connected 
Riemannian manifold with smooth boundary and the focal cut locus 
of the boundary (see Lemma \ref{lem2.2}). 
It will be clarified, by using Lemma \ref{lem2.2}, 
in Section \ref{sec:LTOT} that the cut locus of the manifold 
is not an obstruction at all when we draw a corresponding open triangle 
in a model surface for each open triangle in the manifold. 

\bigskip

Throughout this section, let $(X, \partial X)$ denote a complete connected 
Riemannian $n$-manifold $X$ with smooth boundary $\partial X$.

\bigskip

First, we will recall the definitions of 
$\partial X$-Jacobi fields, focal loci of $\partial X$, and cut loci of $\partial X$, 
which are used throughout this article.

\begin{definition}{\bf ($\partial X$-Jacobi field)} 
Let $\mu :[0, \infty) \rightarrow X$ be a unit speed geodesic emanating perpendicularly from $\partial X$. 
A Jacobi field $J_{\partial X}$ along $\mu$ is called a {\em $\partial X$-Jacobi field}, 
if $J_{\partial X}$ satisfies 
\[
J_{\partial X} (0) \in T_{\mu(0)} \partial X, \quad 
J'_{\partial X} (0) + A_{\mu'(0)} (J_{\partial X} (0)) \in (T_{\mu(0)} \partial X)^{\bot}.
\]
Here $J'$ denotes the covariant derivative of $J$ along $\mu$, 
and $A_{\mu'(0)}$ denotes the shape operator of $\partial X$. 
\end{definition}

\begin{definition}{\bf (Focal locus of $\partial X$)} A point $\mu(t_{0})$, $t_{0} \not= 0$, 
is called a {\em focal point of $\partial X$} along a unit speed geodesic $\mu :[0, \infty) \rightarrow X$ 
emanating perpendicularly from $\partial X$, 
if there exists a non-zero $\partial X$-Jacobi field $J_{\partial X}$ along $\mu$ such that 
$J_{\partial X} (t_{0}) = 0$. 
The {\em focal locus $\Focal (\partial X)$ of $\partial X$} is the union of the focal points of 
$\partial X$ along all of the unit speed geodesics emanating perpendicularly 
from $\partial X$.
\end{definition}

\begin{definition}{\bf (Cut locus of $\partial X$)}
Let $\mu :[0, l_{0}] \rightarrow X$ be a $\partial X$-segment. 
The end point $\mu(l_{0})$ of $\mu ([0, l_{0}])$ 
is called a {\em cut point of} $\partial X$ along $\mu$, 
if any extended geodesic $\bar{\mu} :[0,l_{1}] \lra X$ of $\mu$, $l_{1} > l_{0}$, 
is not a $\partial X$-segment anymore. 
The {\em cut locus $\Cut (\partial X)$ of $\partial X$} is 
the union of the cut points of $\partial X$ along all 
of the $\partial X$-segments.
\end{definition}

Set 
\[
\FC(\partial X) := \Focal (\partial X) \cap \Cut (\partial X).
\]
We then call $\FC(\partial X)$ the {\em focal cut locus of} $\partial X$.

\bigskip
 
From the similar argument in \cite{IT1}, we have the following lemma.

\begin{lemma}{\rm (see \cite[Lemma 2]{IT1})}\label{lem2.1}
The Hausdorff dimension of $\FC(\partial X)$ is at most $n - 2$. 
In particular, $\cH_{n -1} (\FC(\partial X)) = 0$. 
Here $\cH_{n - 1}$ denotes the $(n - 1)$-dimensional Hausdorff measure. 
\end{lemma}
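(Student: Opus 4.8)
The plan is to reduce the assertion to a property of the normal exponential map of $\partial X$ and then to transcribe the argument of \cite[Lemma 2]{IT1}, with the exponential map $\exp_{p}$ of a point replaced by that normal exponential map. First I would fix the set-up: since $X$ lies on one definite side of $\partial X$, the inward unit normal field $\xi$ along $\partial X$ is globally defined and smooth (neither orientability nor connectedness of $\partial X$ is needed), so $E : \partial X \times [0,\infty) \lra X$, $E(p,t) := \exp_{p}(t\,\xi_{p})$, is a well-defined smooth map. For $p \in \partial X$ let $\tau(p) \in (0,\infty]$ denote the cut time of the normal geodesic $t \mapsto E(p,t)$, i.e.\ the supremum of those $t$ for which $E(p,\cdot)|_{[0,t]}$ is a $\partial X$-segment. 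By standard facts on distance functions (cf.\ \cite{S}), $\tau$ is continuous and locally Lipschitz on $\{\tau < \infty\}$, one has $\Cut(\partial X) = E(\Gamma)$ with $\Gamma := \{(p,\tau(p)) : \tau(p) < \infty\}$ the graph of $\tau$, and $\Gamma$ is an $(n-1)$-rectifiable subset of $\partial X \times (0,\infty)$.

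The next step is the elementary but crucial observation that every point of $\FC(\partial X)$ is the $E$-image of a critical point of $E$ lying over $\Gamma$. Indeed, if $x \in \FC(\partial X)$, choose a $\partial X$-segment $\nu$ ending at $x$ along which $x$ is a focal point of $\partial X$; since a $\partial X$-segment cannot remain minimizing beyond its first focal point, $x$ must be the \emph{first} focal point along $\nu$, so the pair $(\nu(0),\tau(\nu(0)))$ is a critical point of $E$ that belongs to $\Gamma$. Hence $\FC(\partial X) \subseteq E(\mathcal F \cap \Gamma)$, where $\mathcal F \subseteq \partial X \times (0,\infty)$ denotes the set of critical points of $E$, so that $E(\mathcal F) = \Focal(\partial X)$.

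It then remains to show $\dim_{\mathcal H} E(\mathcal F \cap \Gamma) \le n-2$, and here I would run the analysis of \cite{IT1}: one stratifies the focal locus $\mathcal F$ by means of Warner's structure theory for focal points, the decisive point being that a focal point which is simultaneously a cut point cannot be a Warner-regular (fold-type, corank one) focal point. At such a point the restriction $E|_{\Gamma}$ would still be immersive of rank $n-1$, whereas being a cut point forces the kernel of $dE$ to be, in the appropriate approximate sense, tangent to $\Gamma$, dropping the rank to at most $n-2$; consequently $\mathcal F \cap \Gamma$ is contained in the non-regular focal set of $E$, which has Hausdorff dimension at most $n-2$, and since $E$ is smooth, hence locally Lipschitz, the same bound passes to the image, giving $\dim_{\mathcal H}\FC(\partial X) \le n-2$ and a fortiori $\cH_{n-1}(\FC(\partial X)) = 0$. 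I expect the main obstacle to be precisely this dimension drop at focal cut points — establishing that a focal cut point is always a non-regular focal point of $E$ and that the non-regular focal set has dimension at most $n-2$; a secondary, essentially bookkeeping issue is to check that Warner's and \cite{IT1}'s estimates, originally phrased for $\exp_{p}$, carry over without change to the normal exponential map $E$ of the hypersurface $\partial X$ with shape operator $A_{\xi}$, which is routine since the only data that enter are the $\partial X$-Jacobi fields along the normal geodesics.
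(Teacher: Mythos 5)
Your overall route is the right one---replace $\exp_{p}$ by the normal exponential map $E$ of $\partial X$ and transcribe the Itoh--Tanaka argument---and you have correctly identified the decisive mechanism: at a focal cut point the kernel of $dE$ lies tangent to the graph $\Gamma$ of the cut-time function (this is the content of \cite[Lemma 1]{IT1}, and is one way to phrase your assertion that such a point is never Warner-regular), so that $E|_{\Gamma}$ drops rank there and its image has Hausdorff dimension at most $n-2$.

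There is, however, a genuine gap at your first reduction, the inclusion $\FC(\partial X)\subseteq E(\mathcal F\cap\Gamma)$. You justify it by ``choose a $\partial X$-segment $\nu$ ending at $x$ along which $x$ is a focal point of $\partial X$'', but no such $\nu$ need exist. With the paper's definitions, $\Focal(\partial X)$ is the union of focal points along \emph{all} geodesics emanating perpendicularly from $\partial X$---that is, the full image $E(\mathcal F)$ of the critical set, including focal points first reached only beyond the cut time. Consequently a point $x\in\FC(\partial X)=E(\mathcal F)\cap E(\Gamma)$ could be a cut point admitting two or more $\partial X$-segments, focal along none of them, and focal only along some extended, non-minimizing perpendicular geodesic; such an $x$ need not lie in $E(\mathcal F\cap\Gamma)$, and your rank-drop argument does not touch it. To close the gap you would have to bound separately the remaining part $\bigl(E(\mathcal F)\cap E(\Gamma)\bigr)\setminus E(\mathcal F\cap\Gamma)$, or else explicitly restrict the assertion to the subset of $\FC(\partial X)$ consisting of cut points that are focal along a $\partial X$-segment realizing them as cut points---which is the set that \cite[Lemma 2]{IT1} actually controls.
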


An open neighborhood $U(q)$ of $q \in X$ is called a 
{\em normal convex neighborhood of} $q$, if, for any points $q_{1}, q_{2} \in U(q)$, 
there exists a unique minimal geodesic segment $\sigma$ joining $q_{1}$ to $q_{2}$ 
such that the segment $\sigma$ is contained in $U(q)$. 
Then, the following lemma follows from Lemma \ref{lem2.1}. 

\begin{lemma}\label{lem2.2}
Assume that 
\[
p \not\in \Focal (\partial X), \quad \ q \not\in \Cut (p), \quad and \quad 
\gamma ([0, d(p, q)]) \cap \partial X = \emptyset,
\]
where $\gamma$ denotes the minimal geodesic segment joining $p$ to $q$. 
Then, for each $v \in \Sph^{n-1}_{q} := \{ v \in T_{q} X \ ; \ \| v \| = 1 \}$, 
there exists a sequence 
\[
\left\{
\gamma_{i} : [0,l_{i}] \rightarrow X
\right\}_{i \in \N}
\]
of minimal geodesic segments $\gamma_{i}$ emanating from $p = \gamma_{i}(0)$ 
convergent to $\gamma$ such that 
\[
\gamma_{i}([0,l_{i}]) \cap \FC(\partial X) = \emptyset 
\]
and 
\[
\lim_{i \to \infty} \frac{1}{\| \exp_{q}^{-1}(\gamma_{i}(l_{i})) \|} \exp_{q}^{-1}(\gamma_{i}(l_{i})) 
= v.
\]
Here $\exp_{q}^{-1}$ denotes the local inverse of the $\exp_{q}$ on 
a normal convex neighborhood $U(q)$ of $q$ disjoint from $\partial X$.
\end{lemma}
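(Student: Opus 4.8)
The plan is to combine the measure-theoretic smallness of $\FC(\partial X)$ from Lemma \ref{lem2.1} with a Baire/Fubini-type argument along geodesics emanating from $p$. First I would work entirely inside the normal convex neighborhood $U(q)$ disjoint from $\partial X$, so that $\exp_q^{-1}$ is a well-defined diffeomorphism there; since $q \notin \Cut(p)$ and $p \notin \Focal(\partial X)$, the geodesic $\gamma$ from $p$ to $q$ is the unique minimal connection, $\gamma$ has no conjugate points of $p$, and (by the open condition $\gamma \cap \partial X = \emptyset$ together with $p \notin \Focal(\partial X)$) a whole cone of nearby geodesics from $p$ stays away from $\partial X$ and remains minimal up to a uniform length. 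Concretely, for a small solid angle $\Omega \subset \Sph^{n-1}_p$ around $\gamma'(0)$ and a small $\delta>0$, the map $E : (u,\ell) \mapsto \exp_p(\ell u)$, defined on $\Omega \times (d(p,q)-\delta, d(p,q)+\delta)$, is a diffeomorphism onto a neighborhood of $q$, each $E(u,\cdot)$ is minimal on its domain, and $E(u,\ell) \notin \partial X$.

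Next I would transfer the smallness of $\FC(\partial X)$ through this chart. Because $E$ is a local diffeomorphism, the preimage $E^{-1}(\FC(\partial X))$ still has $(n-1)$-dimensional Hausdorff measure zero inside $\Omega \times (d(p,q)-\delta, d(p,q)+\delta)$ (Lipschitz maps do not increase Hausdorff measure, and $E^{-1}$ restricted to the relevant compact piece is Lipschitz). By Fubini for Hausdorff measure, for $\cH_{n-1}$-almost every fixed length parameter $\ell$ — in particular for a sequence $\ell_i \to d(p,q)$, and we may even take $\ell_i = d(p,q)$ is \emph{not} automatically good, so instead I fix $\ell$ near $d(p,q)$ — the slice $\{u \in \Omega : (u,\ell) \in E^{-1}(\FC(\partial X))\}$ has $\cH_{n-1}$-measure zero in $\Omega$. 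Alternatively, and more robustly, I would argue directly in $\Omega$: for each $u \in \Omega$ the radial segment $E(u,\cdot)$ on $[0, d(p,q)+\delta/2]$ meets $\FC(\partial X)$ in a set whose image under $E^{-1}$ is a subset of a one-dimensional fiber, and one shows the set of $u$ for which this segment meets $\FC(\partial X)$ at all is $\cH_{n-1}$-null, since otherwise $\FC(\partial X)$ would carry positive $\cH_{n-1}$-measure by the coarea/Eilenberg inequality. This is the step where I would be most careful: the clean way is the coarea-type inequality $\int_\Omega \cH_0\bigl(\text{segment}_u \cap \FC(\partial X)\bigr)\,d\cH_{n-1}(u) \le C\,\cH_{n-1}(\FC(\partial X)) = 0$, forcing $\cH_0(\text{segment}_u \cap \FC(\partial X)) = 0$, i.e. the segment avoids $\FC(\partial X)$ entirely, for $\cH_{n-1}$-a.e. $u \in \Omega$.

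With this in hand, the construction is immediate: given the target direction $v \in \Sph^{n-1}_q$, the vectors $u \in \Omega$ for which $E(u, d(p,q))$ approaches $q$ along direction $v$ form, after scaling, a set of directions that is dense near $\gamma'(0)$ in the relevant sense — more precisely, because $E(\cdot, d(p,q))$ is a diffeomorphism onto a neighborhood of $q$ and $q = E(\gamma'(0), d(p,q))$, the points $E(u, d(p,q))$ with $u \in \Omega$ fill a full neighborhood of $q$, so for every $v$ I can pick $u_i \to \gamma'(0)$ with $\exp_q^{-1}(E(u_i, d(p,q)))$ pointing more and more nearly in direction $v$. Intersecting the full-measure "good" set of directions with such a sequence (using that a $\cH_{n-1}$-full subset of $\Omega$ is dense, in fact meets every open subcone), I extract directions $u_i \to \gamma'(0)$ whose radial geodesics $\gamma_i := E(u_i, \cdot)|_{[0,\ell_i]}$ avoid $\FC(\partial X)$, are minimal, emanate from $p$, converge to $\gamma$, and satisfy $\frac{1}{\|\exp_q^{-1}(\gamma_i(\ell_i))\|}\exp_q^{-1}(\gamma_i(\ell_i)) \to v$.

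The main obstacle, as indicated, is making the "almost every radial geodesic from $p$ misses $\FC(\partial X)$" rigorous while simultaneously controlling the endpoint direction at $q$: one must be sure that perturbing $u$ to dodge the null set $E^{-1}(\FC(\partial X))$ does not destroy the convergence of the endpoint direction to $v$. This is handled by choosing the perturbation \emph{within} the appropriate subcone of $\Omega$ determined by $v$ (which is itself open, hence of positive $\cH_{n-1}$-measure, hence meets the good set), so that denseness of the good directions does the job; the minimality and disjointness from $\partial X$ are then free from the construction of $U(q)$, $\Omega$, and $\delta$. I expect the remaining details — verifying the coarea inequality in the chart and the elementary continuity of $u \mapsto \exp_q^{-1}(E(u,\ell))$ — to be routine.
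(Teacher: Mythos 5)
Your measure-theoretic framework (push $\FC(\partial X)$ through the geodesic chart $E$, use Lipschitzness/coarea to conclude that $\cH_{n-1}$-almost every direction $u$ gives a segment that misses $\FC(\partial X)$) is essentially the same as the paper's, except that the paper avoids coarea and just observes that the Lipschitz projection of an $\cH_{n-1}$-null set is $\cH_{n-1}$-null. Either version of the measure argument is fine.

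The gap is in your final step. You assert that ``$E(\cdot, d(p,q))$ is a diffeomorphism onto a neighborhood of $q$'' and that ``the points $E(u, d(p,q))$ with $u\in\Omega$ fill a full neighborhood of $q$.'' This is false: with the length frozen at $\ell = d(p,q)$, the map $u \mapsto E(u, d(p,q))$ goes from the $(n-1)$-dimensional cone $\Omega \subset \Sph^{n-1}_p$ into the $n$-dimensional manifold $X$, and its image is a piece of the geodesic sphere of radius $d(p,q)$ around $p$, an $(n-1)$-dimensional hypersurface through $q$. As $u \to \gamma'(0)$ the direction of $\exp_q^{-1}(E(u,d(p,q)))$ is asymptotically \emph{tangent} to this hypersurface at $q$, so you can only realize directions $v$ essentially orthogonal to $\gamma'(d(p,q))$, not an arbitrary $v \in \Sph^{n-1}_q$. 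To reach a general $v$ you must vary $\ell$ as well, which entangles the perturbation of $u$ (needed to dodge the null projection) with the endpoint direction at $q$ in a delicate way: perturbing $u$ by $\varepsilon$ moves the endpoint by $O(\varepsilon)$, and if the endpoint is at distance $r$ from $q$, its $\exp_q^{-1}$-direction moves by $O(\varepsilon/r)$, so one needs $\varepsilon = o(r)$ along the sequence.

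The paper sidesteps this with a two-step (diagonal) construction that you should adopt: first pick auxiliary points $q_j \to q$ with $\exp_q^{-1}(q_j)/\|\exp_q^{-1}(q_j)\| \to v$, $q_j \notin \Cut(p)$, and the geodesic $\alpha_j$ from $p$ to $q_j$ staying off $\partial X$; then, for each \emph{fixed} $j$, run your measure argument in the cone chart $\Phi_j(w,s) = \exp_p(sw)$ around $\alpha_j$ to produce good directions $w^{(j)}_i \to \alpha'_j(0)$ whose geodesics $\gamma^{(j)}_i$ avoid $\FC(\partial X)$ and converge to $\alpha_j$; finally take a diagonal subsequence. Since for fixed $j$ the endpoint $\gamma^{(j)}_i(\ell^{(j)}_i)$ converges to $q_j$ (a point at positive distance from $q$), no rate control on the perturbation is needed, and the direction at $q$ is controlled in the outer limit over $j$. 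Your coarea/denseness reasoning can be kept verbatim as the engine for the inner limit; it is the missing outer limit through the $q_j$ that repairs the argument.
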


\begin{proof}
Let $\left\{ q_{j} \right\}_{j \in \N}$ denote a sequence of points $q_{j} \in U(q)$ convergent to $q$ 
such that 
\[
q_{j} \not\in \Cut (p), \quad \alpha_{j} ([0, d(p, q_{j})]) \cap \partial X = \emptyset, 
\]
and 
\[
\lim_{j \to \infty} \frac{1}{\| \exp_{q}^{-1}(q_{j}) \|} \exp_{q}^{-1}(q_{j}) 
= v.
\]
Here $\alpha_{j} : [0, d(p, q_{j})] \rightarrow X$ denotes the minimal geodesic segment emanating from 
$p = \alpha_{j} (0)$ to $q_{j}$. 
We will prove that, for each $q_{j}$, there exists a sequence 
\[
\{
\gamma_{i}^{(j)} : [0, l_{i}^{(j)}] \rightarrow X
\}_{i \in \N}
\]
of minimal geodesic segments $\gamma_{i}^{(j)}$ emanating from $p = \gamma_{i}^{(j)}(0)$ 
convergent to $\alpha_{j}$ such that 
\begin{equation}\label{lem2.2-1}
\gamma_{i}^{(j)}([0, l_{i}^{(j)}]) \cap \FC(\partial X) = \emptyset. 
\end{equation}
It is sufficient to prove the existence of the sequence $\gamma_{i}^{(j)}$ for each $j \in \N$, 
because it is easy to prove the existence of the sequence 
$\{ \gamma_{i} : [0, l_{i}] \rightarrow X \}_{i \in \N}$ in our lemma by taking a subsequence of 
$\{ \gamma_{i}^{(j)} : [0, l_{i}^{(j)}] \rightarrow X \}_{i,\,j \in \N}$.\par
Choose any $q_{j}$ and fix it. 
Since $p$ is not a focal point of $\partial X$, 
there exists a normal convex neighborhood $B_{2\ve}(p)$ of $p$ with radius $2 \ve$ 
such that 
\begin{equation}\label{2009-01-08-1}
\ol{B_{2\ve}(p)} \cap \Focal (\partial X) = \emptyset.
\end{equation}
Since $q_{j}$ is not a cut point of $p$, 
there exist two numbers $l_{j} > d(p, q_{j})$, $\theta_{j} > 0$, 
and a neighborhood $U_{j}$ around $q_{j}$ such that 
$U_{j}$ is diffeomorphic to $V_{\alpha'_{j}(0)}(\theta_{j}) \times (\ve, l_{j})$. 
Here we set 
\[
V_{\alpha'_{j}(0)}(\theta_{j})
:= 
\left\{
w_{j} \in T_{p} X \ ; \ \| w_{j} \| = 1, \ \angle(w_{j}, \alpha'_{j}(0)) < \theta_{j}
\right\}.
\]
Here, the diffeomorphism $\Phi_{j}$ from $V_{\alpha'_{j}(0)}(\theta_{j}) \times (\ve, l_{j})$ onto 
$U_{j}$ is given by 
\[
\Phi_{j} (w_{j}, s) := \exp_{p}(s\,w_{j}).
\] 
Since $\Phi^{-1}_{j}$ is Lipschitz, 
the map $\Pi_{j} := \cP_{j} \circ \Phi^{-1}_{j} : U_{j} \rightarrow V_{\alpha'_{j}(0)}(\theta_{j})$ 
is also Lipschitz, where 
$\cP_{j} : V_{\alpha'_{j}(0)}(\theta_{j}) \times (\ve, l_{j}) \rightarrow V_{\alpha'_{j}(0)}(\theta_{j})$ 
denotes the projection to the first factor. 
Therefore, it follows from Lemma \ref{lem2.1} that 
\[
\cH_{n - 1}(\Pi_{j}(U_{j} \cap \FC(\partial X))) = 0.
\] 
This implies that there exists a sequence $\{ w^{(j)}_{i} \}_{i \in \N}$ 
of elements $w^{(j)}_{i} \in V_{\alpha'_{j}(0)}(\theta_{j})$ convergent 
to $\alpha'_{j}(0)$ such that 
\begin{equation}\label{2009-01-08-2}
w^{(j)}_{i} \not\in \Pi_{j}(U_{j} \cap \FC(\partial X))
\end{equation}
for each $i \in \N$. 
Let $\{l^{(j)}_{i}\}_{i \in \N}$ be a sequence of numbers 
$l^{(j)}_{i} \in (0, l_{j})$ convergent to $d(p, q_{j})$. 
By setting 
\[
\gamma^{(j)}_{i}(s) := \exp_{p}(s\,w^{(j)}_{i}), \quad s \in [0, l^{(j)}_{i}],
\]
for each $i \in \N$, 
it follows from (\ref{2009-01-08-1}) and (\ref{2009-01-08-2}) that 
we get a sequence of minimal geodesic segments $\gamma^{(j)}_{i}$ 
emanating from $p = \gamma^{(j)}_{i}(0)$ 
convergent to $\alpha_{j}$ satisfying (\ref{lem2.2-1}).
$\qedd$
\end{proof}

\section{Length of $\partial X$-segments in variations}\label{sec:LGV}

Our purpose of this section is to prove a comparison theorem (Lemma \ref{lem2.6}) 
of the Rauch type on length of $\partial X$-segments in variations of a $\partial X$-segment, 
by using the second variation formula and the Warner comparison theorem. 
As a result, 
readers might be surprised by, and would realize, as Gromoll once suggested, 
that we may still understand a global matter on a Riemannian manifold by the second variation, 
because Lemma on thin open triangles (Lemma \ref{lem3.8}), 
proved by Lemmas \ref{lem2.2} and \ref{lem2.6}, plays an important role 
in the proof of the Toponogov comparison theorem for open triangles (see Section \ref{sec:TCTproof}).  

\bigskip

Throughout this section, let $(X, \partial X)$ denote a complete connected Riemannian 
$n$-manifold $X$ with smooth {\bf convex} boundary $\partial X$ 
whose radial curvature is bounded from below by the radial curvature function $G$ 
of a model surface $(\wt{X}, \partial \wt{X})$ with its metric (\ref{model-metric}).

\bigskip

Take any point $r \in X \setminus \left( \partial X \cup \Focal (\partial X) \right)$, and fix it. 
Then there exists a positive number 
$\ve_{0} : = \ve_{0}(r)$ such that 
\begin{equation}\label{2.2}
B_{2\ve_{0}}(r) \cap \left( \Focal (\partial X) \cup \partial X \right)= \emptyset,
\end{equation}
where $B_{2\ve_{0}}(r)$ denotes the normal convex neighborhood of $r$ with radius $2\ve_{0}$. 
Take any point $p \in B_{\ve_{0}}(r)$, and fix it. 
Let $\mu : [0, l] \rightarrow X$ denote a $\partial X$-segment to $p = \mu (l)$. 
By (\ref{2.2}), we may find a number $\ve_{1} \in (0, \ve_{0}]$ independent of the choice of $p$ 
and an open neighborhood $\cU$ around $l \mu'(0)$ such that 
\[
\exp^{\bot} : \cU \rightarrow B_{\ve_{1}}(p)
\] 
is a diffeomorphism. 
Here $\exp^{\bot}$ denotes the normal exponential map 
on the normal bundle of $\partial X$. 
Let $\xi : \R \rightarrow \Sph^{n-1}_{p}$ be a unit speed geodesic on 
$\Sph^{n-1}_{p}$ emanating from $\mu'(l) = \xi(0)$, 
where $\Sph^{n-1}_{p} := \{ v \in T_{p} X \ ; \ \| v \| = 1 \}$. 
Notice that $\angle (\mu'(l), \xi (\theta)) = | \theta |$ for all $\theta \in [-\pi, \pi]$. 
From now on, we assume that the curve $\xi$ and its parameter value $\theta \in [-\pi, \pi]$ 
are also fixed. 
Then, we get a minimal geodesic segment $c$ emanating from $p = c(0)$ defined by 
\[
c(s) := \exp_{p} (s\,\xi (\theta))
\]
for all $s \in (-\ve_{1}, \ve_{1})$. 
Thus, we get a geodesic variation 
$\varphi : [0,l] \times (- \ve_{1}, \ve_{1}) \rightarrow X$ of $\mu$ defined by 
\[
\varphi (t, s) := \exp^{\bot} \left( \frac{t}{l}\,v(s) \right),
\]
where we set 
$v(s) := \left( \exp^{\bot} |_{\cU} \right)^{-1}(c(s))$. 
For each $s \in (-\ve_{1}, \ve_{1})$, 
$c(s)$ is joined by a geodesic segment $\varphi_{s}(\,\cdot\,) := \varphi (\,\cdot\,,s)$ emanating perpendicularly from $\partial X$. 
By setting 
\[
J_{\partial X} (t) := \frac{\partial \varphi}{\partial s} (t, 0),
\]
we get a $\partial X$-Jacobi field  $J_{\partial X}$ along $\mu$. 
It is clear that 
\begin{equation}\label{2proof-1}
J_{\partial X} (l) = c'(0).
\end{equation}

Then, we first get the following lemma.

\begin{lemma}\label{lem2.3.new1}
For each $ t \in [0, l]$, an orthogonal component $Y_{\partial X}(t)$ of  $J_{\partial X}(t)$ 
with respect to $\mu'(t)$ is given by 
\[
Y_{\partial X} (t) := J_{\partial X} (t) - \frac{\cos \theta}{l}\,t\,\mu'(t).
\]
\end{lemma}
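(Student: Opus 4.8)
The plan is to compute the component of $J_{\partial X}(t)$ in the $\mu'(t)$ direction explicitly and show it equals $\frac{\cos\theta}{\ell}\,t\,\mu'(t)$; subtracting it from $J_{\partial X}(t)$ then gives the asserted formula for the vertical component $Y_{\partial X}$.

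First I would invoke the elementary fact that, along the unit speed geodesic $\mu$, the function $t \mapsto \langle J_{\partial X}(t), \mu'(t)\rangle$ is affine: since $\mu$ is a geodesic, $\frac{d}{dt}\langle J_{\partial X}, \mu'\rangle = \langle J'_{\partial X}, \mu'\rangle$, and differentiating once more and using the Jacobi equation $J''_{\partial X} = -R(J_{\partial X}, \mu')\mu'$ together with the antisymmetry $\langle R(J_{\partial X},\mu')\mu',\mu'\rangle = 0$ shows the second derivative vanishes. Hence it suffices to evaluate $\langle J_{\partial X}, \mu'\rangle$ at the two endpoints $t = 0$ and $t = \ell$.

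At $t = 0$: since $\mu$ is a $\partial X$-segment it meets $\partial X$ orthogonally, so $\mu'(0) \in (T_{\mu(0)}\partial X)^{\bot}$, while by definition of a $\partial X$-Jacobi field $J_{\partial X}(0) \in T_{\mu(0)}\partial X$; therefore $\langle J_{\partial X}(0), \mu'(0)\rangle = 0$. At $t = \ell$: by (\ref{2proof-1}) we have $J_{\partial X}(\ell) = c'(0) = \xi(\theta)$, while $\mu'(\ell) = \xi(0)$; since $\xi$ is a unit speed geodesic on $\Sph^{n-1}_{p}$ with $\angle(\xi(0),\xi(\theta)) = |\theta|$, the unit vectors $\xi(0)$ and $\xi(\theta)$ satisfy $\langle \xi(\theta), \xi(0)\rangle = \cos\theta$, so $\langle J_{\partial X}(\ell), \mu'(\ell)\rangle = \cos\theta$. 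The unique affine function of $t$ with values $0$ at $0$ and $\cos\theta$ at $\ell$ is $t \mapsto \frac{\cos\theta}{\ell}\,t$, so $\langle J_{\partial X}(t), \mu'(t)\rangle = \frac{\cos\theta}{\ell}\,t$ on $[0,\ell]$. As $\mu'(t)$ is a unit vector, the $\mu'(t)$-component of $J_{\partial X}(t)$ is precisely $\frac{\cos\theta}{\ell}\,t\,\mu'(t)$, and the complementary vertical component is $Y_{\partial X}(t) = J_{\partial X}(t) - \frac{\cos\theta}{\ell}\,t\,\mu'(t)$.

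I do not anticipate a genuine obstacle; the only two points needing a little care are the affineness of $\langle J_{\partial X}, \mu'\rangle$, which is immediate from the Jacobi equation, and the identity $\langle c'(0), \mu'(\ell)\rangle = \cos\theta$, which follows at once from the definition of $c$ and $\xi$ and the relation $\angle(\mu'(\ell),\xi(\theta)) = |\theta|$ recorded just above. (Alternatively one could fix the slope of the affine function from its derivative at $t=0$, namely $\langle J'_{\partial X}(0), \mu'(0)\rangle$, using the $\partial X$-Jacobi boundary condition and the fact that the shape operator $A_{\mu'(0)}$ maps $T_{\mu(0)}\partial X$ into itself and hence contributes nothing in the $\mu'(0)$ direction; but evaluating at the two endpoints is the shortest route.)
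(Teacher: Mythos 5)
Your proof is correct and follows essentially the same route as the paper: both observe that $\langle J_{\partial X}(t),\mu'(t)\rangle$ is affine in $t$, pin it down from the boundary data $\langle J_{\partial X}(0),\mu'(0)\rangle=0$ (since $J_{\partial X}(0)\in T_{\mu(0)}\partial X$ and $\mu'(0)\perp T_{\mu(0)}\partial X$) and $\langle J_{\partial X}(\ell),\mu'(\ell)\rangle=\langle \xi(\theta),\xi(0)\rangle=\cos\theta$ via (\ref{2proof-1}), and then subtract the tangential part. You merely spell out the affineness argument and the endpoint evaluations a bit more explicitly than the paper does.
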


\begin{proof}
Since $J_{\partial X}$ is a Jacobi field along $\mu$, 
there exist constant numbers $a$ and $b$ 
satisfying 
\[
\big\langle 
J_{\partial X} (t), \mu' (t)
\big\rangle 
= a t + b
\]
for all $t \in [0, l]$. 
Since $J_{\partial X} (0)$ is orthogonal to $\mu'(0)$, 
we see $b = 0$. 
Furthermore, by (\ref{2proof-1}), we see 
\[
a = \frac{\cos \theta}{l}.
\]
Thus, we get 
\[
\big\langle 
J_{\partial X} (t), \mu' (t)
\big\rangle 
= \frac{\cos \theta}{l}\,t
\]
for all $t \in [0, l]$. 
Hence, the Jacobi field $Y_{\partial X}$ 
along $\mu$ defined by 
\[
Y_{\partial X} (t) := J_{\partial X} (t) - \frac{\cos \theta}{l}\,t\,\mu'(t)
\]
is orthogonal to $\mu'(t)$ on $[0, l]$. 
$\qedd$
\end{proof}

\bigskip

In this article, we denote by 
\[
\cI_{\partial X}^{l} (V, W) := I_{l} (V, W) - 
\big\langle 
A_{\mu'(0)} (V (0)), W (0)
\big\rangle 
\]
the index form with respect to $\mu|_{[0, \,l]}$ for piecewise $C^{\infty}$ 
vector fields $V, W$ along $\mu|_{[0, \,l]}$, where we set
\[
I_{l} (V, W) 
:= 
\int^{l}_{0} 
\left\{ 
\big\langle 
V', W'
\big\rangle
-
\big\langle 
R(\mu', V) \mu', W
\big\rangle
\right\}
dt,
\]
which is a symmetric bilinear form. 
The following lemma is clear from the first and second variation formulas 
and Lemma \ref{lem2.3.new1}.

\begin{lemma}\label{lem2.3}The equalities $L'(0) = \cos \theta$ and 
$L''(0) = \cI_{\partial X}^{l} (Y_{\partial X}, Y_{\partial X})$ hold. 
Here $L(s)$ denotes the length of the geodesic segment $\varphi_{s}(\,\cdot\,)$ 
emanating perpendicularly from $\partial X$. 
\end{lemma}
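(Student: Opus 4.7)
The plan is to apply the classical first and second variation formulas of arclength to the geodesic variation $\varphi$, and then to evaluate the resulting boundary contributions using the geometry of $\partial X$.

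For the first formula, since $\mu$ is a geodesic, the interior integral in the first variation vanishes and only the boundary evaluation
\[
L'(0) = \big\langle J_{\partial X}(\ell), \mu'(\ell)\big\rangle - \big\langle J_{\partial X}(0), \mu'(0)\big\rangle
\]
survives. The second term is zero because $J_{\partial X}(0) \in T_{\mu(0)}\partial X$ while $\mu'(0) \perp \partial X$, and the first term equals $\langle \xi(\theta), \mu'(\ell)\rangle = \cos\theta$ by (\ref{2proof-1}) together with $\angle(\mu'(\ell), \xi(\theta)) = |\theta|$.

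For the second formula, I would invoke the general second variation of arclength for a variation of a geodesic, namely
\[
L''(0) = I_\ell(V^\perp, V^\perp) + \Big[\big\langle \nabla_{\partial_s}\partial_s \varphi\big|_{s=0}, \mu'\big\rangle\Big]_{t=0}^{t=\ell},
\]
where $V^\perp$ denotes the $\mu'$-orthogonal component of the variation field $V = J_{\partial X}$. By Lemma \ref{lem2.3.new1} one has $V^\perp = Y_{\partial X}$, so the interior term is $I_\ell(Y_{\partial X}, Y_{\partial X})$. The boundary contribution at $t = \ell$ vanishes because $s \mapsto \varphi(\ell, s) = c(s)$ is itself a geodesic. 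For the contribution at $t = 0$, the curve $\beta(s) := \varphi(0, s)$ is constrained to $\partial X$ with $\beta'(0) = J_{\partial X}(0)$, and differentiating the identity $\langle \beta'(s), N(\beta(s))\rangle \equiv 0$ (with $N$ a local unit-normal extension of $\mu'(0)$) together with the sign convention $A_{\mu'(0)}(v) = -(\nabla_v N)^\top$ yields
\[
\big\langle \nabla_{\partial_s}\partial_s \varphi\big|_{t=0,\,s=0},\, \mu'(0)\big\rangle = \big\langle A_{\mu'(0)}(J_{\partial X}(0)),\, J_{\partial X}(0)\big\rangle .
\]
Since $Y_{\partial X}(0) = J_{\partial X}(0)$, the boundary term contributes $-\langle A_{\mu'(0)}(Y_{\partial X}(0)), Y_{\partial X}(0)\rangle$, and combining with the interior term produces exactly $\cI_{\partial X}^\ell(Y_{\partial X}, Y_{\partial X})$.

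The one delicate point is the endpoint at $t = 0$: one must not confuse the acceleration of $\beta$, which is a curve inside $\partial X$ and is \emph{not} a geodesic of $X$, with the endpoint curve at $t = \ell$, which \emph{is} a geodesic and therefore makes no contribution. The normal component of $\beta''(0)$ is precisely what records the shape operator $A_{\mu'(0)}$, and tracking the sign convention adopted in the paper is exactly what turns the variation formula into the index form $\cI^\ell_{\partial X}$.
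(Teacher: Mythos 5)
Your proposal is correct and takes precisely the route the paper intends: the paper's own ``proof'' is the single sentence preceding the lemma, which declares it clear from the first and second variation formulas together with Lemma \ref{lem2.3.new1}, and you have simply filled in those details — the vanishing at $t=\ell$ because $c$ is a geodesic of $X$, the constraint $\langle\beta',N\circ\beta\rangle\equiv 0$ along the $\partial X$-curve $\beta(s)=\varphi(0,s)$ and the sign convention $A_{\mu'(0)}(v)=-(\nabla_v N)^\top$ turning the $t=0$ endpoint term into $-\langle A_{\mu'(0)}(Y_{\partial X}(0)),Y_{\partial X}(0)\rangle$, and Lemma \ref{lem2.3.new1} identifying the $\mu'$-orthogonal part of $J_{\partial X}$ with $Y_{\partial X}$ so the interior term is $I_\ell(Y_{\partial X},Y_{\partial X})$. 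The argument is correct at every step, matching the paper's definition of $\cI^\ell_{\partial X}$.
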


Now, choose any sufficiently small number $\lambda > 0$ and fix it. 
Let  $(\wt{X}_{\lambda}, \partial \wt{X}_{\lambda})$ denote a model surface 
with its metric 
\[
\tilde{g}_{\lambda} = d\tilde{x}^2 + m_{\lambda}(\tilde{x})^{2} d\tilde{y}^2
\]
on $[0, \infty) \times \R$. 
Here the positive smooth function $m_{\lambda}$ satisfies the differential equation 
\[
m_{\lambda}'' + (G - \lambda) m_{\lambda} = 0, \quad m_{\lambda}(0) = 1, \ m_{\lambda}'(0) = 0,
\]
where 
$G$ denotes the radial curvature function of $(\wt{X}, \partial \wt{X})$. 
Thus, the radial curvature of $(X, \partial X)$ is greater than $G_{\lambda} := G - \lambda$. 
Take any point $\tilde{p}$ in $\wt{X}_{\lambda} \setminus \partial \wt{X}_{\lambda}$ 
satisfying 
\[
d(\partial \wt{X}_{\lambda}, \tilde{p}) = d(\partial X, p) = d(\partial X, \mu (l)) = l.
\] 
Throughout this section, we fix $\tilde{p}$.\par 
Let $\tilde{\mu}_{\lambda} : [0,l] \rightarrow \wt{X}_{\lambda}$ denote 
a $\partial \wt{X}_{\lambda}$-segment to $\tilde{p}$, 
and let $\wt{E}_{\lambda}$ denote a unit parallel vector field along $\tilde{\mu}_{\lambda}$ 
orthogonal to $\tilde{\mu}_{\lambda}$. 
Then, we define a $\partial \wt{X}_{\lambda}$-Jacobi field $\wt{Z}_{\lambda}$ 
along $\tilde{\mu}_{\lambda}$ by 
\[
\wt{Z}_{\lambda}(t) := \frac{1}{m_{\lambda}(l)} m_{\lambda} (t) \wt{E}_{\lambda}(t).
\]
Furthermore, by the same definition, we denote also by 
$I_{l} (\,\cdot\,, \,\cdot\,)$ 
the symmetric bilinear form for piecewise $C^{\infty}$ vector fields along 
$\tilde{\mu}_{\lambda}|_{[0, \,l]}$. 
Then, we have the following lemma. 

\begin{lemma}\label{lem2.5.new1}
\[
I_{l} (\wt{Z}_{\lambda}, \wt{Z}_{\lambda}) 
\ge \cI_{\partial X}^{l} (Z_{\partial X}, Z_{\partial X}) 
+ \frac{\lambda}{m_{\lambda}(l)^{2}} \int_{0}^{l} m_{\lambda}(t)^{2}\,dt
\]
holds for all $\partial X$-Jacobi fields $Z_{\partial X}$ along $\mu$ orthogonal to $\mu$ with 
$\|Z_{\partial X} (l)\| = 1$.
\end{lemma}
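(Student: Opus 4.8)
The plan is to compare the two index forms directly, using the curvature hypothesis $K_X \ge G_\lambda = G - \lambda$ along $\mu$ together with the fact that $\wt Z_\lambda$ is a genuine $\partial\wt X_\lambda$-Jacobi field solving the model Jacobi equation. First I would observe that, since $\wt Z_\lambda(t) = \frac{m_\lambda(t)}{m_\lambda(\ell)}\wt E_\lambda(t)$ with $\wt E_\lambda$ unit parallel, $\wt Z_\lambda$ satisfies $\wt Z_\lambda'' + G_\lambda\,\wt Z_\lambda = 0$, so an integration by parts gives the clean closed form
\[
I_\ell(\wt Z_\lambda, \wt Z_\lambda) = \big\langle \wt Z_\lambda'(\ell), \wt Z_\lambda(\ell)\big\rangle - \big\langle \wt Z_\lambda'(0), \wt Z_\lambda(0)\big\rangle = \frac{m_\lambda'(\ell)}{m_\lambda(\ell)} - \frac{m_\lambda'(0)}{m_\lambda(0)} = \frac{m_\lambda'(\ell)}{m_\lambda(\ell)},
\]
using $m_\lambda(0)=1$, $m_\lambda'(0)=0$ (and $\wt Z_\lambda(0)=0$ for the boundary term, since $m_\lambda$ starts from a $\partial\wt X_\lambda$-ray configuration — here one must be slightly careful about whether $\wt Z_\lambda(0)$ vanishes or is handled by the shape-operator term, which in the model is zero because $\partial\wt X_\lambda$ is totally geodesic).

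Next I would construct a comparison vector field along $\mu$ in $X$: normalize by setting $Z_{\partial X}$ to be the prescribed $\partial X$-Jacobi field orthogonal to $\mu$ with $\|Z_{\partial X}(\ell)\|=1$, and introduce the test field $W(t) := \frac{m_\lambda(t)}{m_\lambda(\ell)}E(t)$ along $\mu$, where $E$ is the unit parallel field along $\mu$ with $E(\ell) = Z_{\partial X}(\ell)$. By the minimizing property of $\partial X$-Jacobi fields for the index form $\cI_{\partial X}^\ell$ among vector fields with the same boundary data at $t=\ell$ and satisfying the $\partial X$-admissibility condition at $t=0$ (this is the index-form minimality that underlies the Rauch comparison machinery, and it is exactly where convexity of $\partial X$, i.e. $A_{\mu'(0)}\ge 0$, enters to control the shape-operator term), we get $\cI_{\partial X}^\ell(Z_{\partial X}, Z_{\partial X}) \le \cI_{\partial X}^\ell(W, W)$. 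Then I would expand $\cI_{\partial X}^\ell(W,W) = \int_0^\ell\{\|W'\|^2 - \langle R(\mu',W)\mu', W\rangle\}\,dt - \langle A_{\mu'(0)}(W(0)), W(0)\rangle$ and compare termwise with $I_\ell(\wt Z_\lambda, \wt Z_\lambda)$: the $\|W'\|^2$ terms match since $m_\lambda$ is the same function on both sides; the curvature term obeys $\langle R(\mu',W)\mu',W\rangle = K_X(\sigma_t)\|W\|^2 \ge G_\lambda(t)\|W\|^2 = -\frac{m_\lambda''(t)}{m_\lambda(t)}\|W\|^2$, so $-\langle R(\mu',W)\mu',W\rangle \le \frac{m_\lambda''}{m_\lambda}\|W\|^2$; and the shape-operator term is $\le 0$ by convexity. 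Assembling these and integrating the model Jacobi equation once more yields $\cI_{\partial X}^\ell(W,W) \le \frac{m_\lambda'(\ell)}{m_\lambda(\ell)} = I_\ell(\wt Z_\lambda, \wt Z_\lambda)$.

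Finally I would track the $\lambda$-gain precisely rather than throw it away. The slack in the curvature comparison is not merely $K_X \ge G - \lambda$ against the model with curvature $G_\lambda$ — rather, one should compare against the \emph{original} model identity or, more directly, note that writing $G_\lambda = G - \lambda$ and keeping the $\lambda$ term explicit, the curvature inequality becomes $-\langle R(\mu',W)\mu',W\rangle \le \frac{m_\lambda''}{m_\lambda}\|W\|^2$ exactly (no slack), but the asserted extra term $\frac{\lambda}{m_\lambda(\ell)^2}\int_0^\ell m_\lambda(t)^2\,dt$ must come from comparing $\cI_{\partial X}^\ell(Z_{\partial X},Z_{\partial X})$ against a model field in a \emph{different} surface — presumably the original $\wt X$ with curvature $G$ rather than $G_\lambda$ — and then absorbing the difference $G - G_\lambda = \lambda$ times $\int \|W\|^2 = \int m_\lambda^2/m_\lambda(\ell)^2$. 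So the cleaner route: apply the standard index comparison of $\cI^\ell_{\partial X}(Z_{\partial X},Z_{\partial X})$ with the model-$\wt X_\lambda$ field to get $\cI^\ell_{\partial X}(Z_{\partial X},Z_{\partial X}) \le I_\ell(\wt Z_\lambda,\wt Z_\lambda)$, then separately bound $I_\ell$ for the $\wt X$-field versus the $\wt X_\lambda$-field — but since the lemma as stated pairs $\cI_{\partial X}$ directly with $I_\ell(\wt Z_\lambda,\wt Z_\lambda)$, I expect the actual argument keeps everything in $\wt X_\lambda$ and the $\lambda$-term simply records the curvature gap $K_X \ge G > G_\lambda$ integrated against $\|W\|^2$. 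The main obstacle is being scrupulous about the boundary contribution at $t=0$: one must verify that the $\partial X$-admissibility of $W$ can be arranged so that the comparison of index forms is valid with equality of boundary data at $t=\ell$, and that the totally-geodesic boundary of the model kills the corresponding term there while convexity of $\partial X$ only helps (gives the right sign) on the $X$ side. Once the index-form minimality is correctly invoked with these boundary conditions, the termwise curvature estimate and one integration by parts finish the proof.
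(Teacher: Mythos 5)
Your proposal takes essentially the same route as the paper: comparison field $W(t)=\tfrac{m_\lambda(t)}{m_\lambda(\ell)}E(t)$ with $E$ unit parallel and $E(\ell)=Z_{\partial X}(\ell)$, index-form minimality of the $\partial X$-Jacobi field (the Warner comparison, Lemma 2.10 in \cite{S}), and convexity of $\partial X$ to discard the shape-operator term. You also correctly diagnose, in the final paragraph, that the extra $\lambda$-term records the gap $K_X\ge G>G_\lambda$ integrated against $\|W\|^2$. That is exactly the paper's argument.

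However, the argument as actually written down has gaps. In the middle paragraph you compare $K_X(\sigma_t)\|W\|^2\ge G_\lambda\|W\|^2$ and hence $-\langle R(\mu',W)\mu',W\rangle\le\frac{m_\lambda''}{m_\lambda}\|W\|^2$, which throws the $\lambda$-gain away and delivers only $\cI^\ell_{\partial X}(Z_{\partial X},Z_{\partial X})\le I_\ell(\wt Z_\lambda,\wt Z_\lambda)$ without the extra term. The paper instead uses the hypothesis $K_X\ge G$ and writes $-G_\lambda\|W\|^2=-G\|W\|^2+\lambda\|W\|^2\ge -K_X(\sigma_t)\|W\|^2+\lambda\|W\|^2$, which after integration gives $I_\ell(\wt Z_\lambda,\wt Z_\lambda)\ge I_\ell(W,W)+\tfrac{\lambda}{m_\lambda(\ell)^2}\int_0^\ell m_\lambda^2\,dt$; then $I_\ell(W,W)\ge\cI_{\partial X}^\ell(W,W)\ge\cI_{\partial X}^\ell(Z_{\partial X},Z_{\partial X})$ (convexity, then index minimality) closes it. Your final paragraph describes this fix as a plan but never executes it, and moreover asserts that the inequality $-\langle R(\mu',W)\mu',W\rangle\le\frac{m_\lambda''}{m_\lambda}\|W\|^2$ holds ``exactly (no slack),'' which is exactly backwards --- the slack there is the entire content of the $\lambda$-term. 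A separate, smaller slip: you write $\wt Z_\lambda(0)=0$; in fact $\wt Z_\lambda(0)=\tfrac{1}{m_\lambda(\ell)}\wt E_\lambda(0)\neq 0$, and the boundary term at $t=0$ in your integration by parts vanishes because $\wt Z_\lambda'(0)=0$ (since $m_\lambda'(0)=0$), not because $\wt Z_\lambda(0)$ does. (That side computation $I_\ell(\wt Z_\lambda,\wt Z_\lambda)=m_\lambda'(\ell)/m_\lambda(\ell)$, once the boundary-term reasoning is repaired, is correct but is not needed in the paper's proof.)
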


\begin{proof}
We can prove this lemma by the argument in the proof of the Warner comparison theorem \cite{W}. 
For completeness, we will give a proof here. 
Let $E$ be a unit parallel vector field along $\mu$ orthogonal to $\mu$ such that 
\[
E(l) = Z_{\partial X}(l),
\]
where $Z_{\partial X}$ denotes a $\partial X$-Jacobi field along $\mu$ orthogonal to $\mu$. 
Set
\[
W(t) := \frac{1}{m_{\lambda}(l)} m_{\lambda} (t) E(t).
\]
Since 
$
K_{X}(\sigma_{t}) 
\ge G (\tilde{\mu}(t)) 
> G_{\lambda}(\tilde{\mu}_{\lambda}(t)) 
= G (\tilde{\mu}(t)) - \lambda$, 
we have
\begin{align}\label{lem2.5.new1-1}
I_{l} (\wt{Z}_{\lambda}, \wt{Z}_{\lambda}) 
&= \int^{l}_{0} 
\left\{ 
\left\langle 
\wt{Z}'_{\lambda}, \wt{Z}'_{\lambda}
\right\rangle
-
G_{\lambda}(\tilde{\mu}_{\lambda}(t)) 
\left\|
\wt{Z}_{\lambda}
\right\|^{2}
\right\}
dt\\[2mm]
&= \int^{l}_{0} 
\left\{ 
\left\langle 
W', W'
\right\rangle
-
(G (\tilde{\mu}(t)) - \lambda)
\left\|
W
\right\|^{2}
\right\}
dt\notag\\[2mm]
&\ge \int^{l}_{0} 
\left\{ 
\left\langle 
W', W'
\right\rangle
-
K_{X} (\sigma_{t})
\left\|
W
\right\|^{2}
\right\}
dt
+ 
\lambda 
\int^{l}_{0} 
\left\|
W
\right\|^{2}
dt
\notag\\[2mm]
&= 
I_{l} (W, W) 
+ 
\frac{\lambda}{m_{\lambda}(l)^{2}} \int_{0}^{l}m_{\lambda}(t)^{2}\,dt.\notag
\end{align}
Since $Z_{\partial X}$ is the $\partial X$-Jacobi field with 
$Z_{\partial X}(l) = E(l) = W(l)$, 
it follows from \cite[Lemma 2.10 in Chapter III]{S} that 
\begin{equation}\label{lem2.5.new1-2}
I_{l} (W, W) - \big\langle A_{\mu'(0)} (W(0) ), W(0) \big\rangle
=
\cI_{\partial X}^{l} (W, W)
\ge 
\cI_{\partial X}^{l} (Z_{\partial X}, Z_{\partial X}).
\end{equation}
Since $ \big\langle A_{\mu'(0)} (W(0) ), W(0) \big\rangle \ge 0$, 
we get, by (\ref{lem2.5.new1-1}) and (\ref{lem2.5.new1-2}), 
\begin{align}
I_{l} (\wt{Z}_{\lambda}, \wt{Z}_{\lambda}) 
&\ge
\cI_{\partial X}^{l} (Z_{\partial X}, Z_{\partial X})
+\big\langle A_{\mu'(0)} (W(0) ), W(0) \big\rangle
+ 
\frac{\lambda}{m_{\lambda}(l)^{2}} \int_{0}^{l}m_{\lambda}(t)^{2}\,dt\notag\\[2mm]
&\ge
\cI_{\partial X}^{l} (Z_{\partial X}, Z_{\partial X})
+ 
\frac{\lambda}{m_{\lambda}(l)^{2}} \int_{0}^{l}m_{\lambda}(t)^{2}\,dt\notag.
\end{align}
$\qedd$
\end{proof}

\bigskip

Let $\tilde{c}_{\lambda} : (-\ve_{1}, \ve_{1}) \rightarrow \wt{X}_{\lambda}$ denote 
the minimal geodesic segment emanating from $\tilde{p} = \tilde{c}_{\lambda}(0)$ corresponding to 
the minimal geodesic segment $c(s) = \exp_{p} (s \, \xi (\theta)), s \in (-\ve_{1}, \ve_{1})$ 
in $B_{\ve_{1}}(p) \subset X$. 
Without loss of generality, 
we may assume that $B_{\ve_{1}} (\tilde{p}) \cap \partial \wt{X}_{\lambda} = \emptyset$. 
We consider a geodesic variation 
$\tilde{\varphi}^{(\lambda)} : [0,l] \times (- \ve_{1}, \ve_{1}) \rightarrow \wt{X}_{\lambda}$ 
of $\tilde{\mu}_{\lambda}$ defined by 
\[
\tilde{\varphi}^{(\lambda)} (t, s) := \exp^{\bot} \left( \frac{t}{l}\,\tilde{v}_{\lambda}(s) \right),
\]
where we set 
$\tilde{v}_{\lambda}(s) := \left( \exp^{\bot} \right)^{-1}(\tilde{c}_{\lambda}(s))$. 
By setting 
\[
\wt{J}_{\lambda} (t) := \frac{\partial \tilde{\varphi}^{(\lambda)}}{\partial s} (t, 0),
\]
we get a $\partial \wt{X}_{\lambda}$-Jacobi field $\wt{J}_{\lambda}$ 
along $\tilde{\mu}_{\lambda}$. 
As well as above, $\wt{J}_{\lambda} (l) = \tilde{c}_{\lambda}'(0)$ holds, 
and  the Jacobi field $\wt{Y}_{\lambda}$ 
along along $\tilde{\mu}_{\lambda}$ defined by 
\[
\wt{Y}_{\lambda} (t) 
:= 
\wt{J}_{\lambda} (t) 
- 
\frac{\cos \theta}{l}\,t\,\tilde{\mu}_{\lambda}'(t)
\]
is orthogonal to $\tilde{\mu}_{\lambda}'(t)$ on $[0, l]$. 

\begin{lemma}\label{lem2.5}
There exists a number $\lambda_{0} := \lambda_{0}(l_{0}, \ve_{0})> 0$ depending 
on $l_{0}$ and $\ve_{0}$ such that, for any $\lambda \in (0, \lambda_{0})$, 
any unit speed geodesic $\xi$ on $\Sph^{n-1}_{p}$ emanating from $\mu'(l)$, 
and any $\theta \in (0, \pi)$, the inequality 
\[
I_{l} (\wt{Y}_{\lambda}, \wt{Y}_{\lambda}) 
- \cI_{\partial X}^{l} (Y_{\partial X}, Y_{\partial X}) 
\ge 
\lambda \,C_{1} \sin^{2} \theta
\]
holds. 
Here $C_{1}$ is a constant number given by 
\[
C_{1} := \frac{1}{2 m(l_{0})^{2}} \int_{0}^{l_{0}} m(t)^{2} \,dt,
\]
where $l_{0} := d(\partial X, r)$.
\end{lemma}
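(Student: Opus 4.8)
The plan is to compare the two index forms by decomposing each Jacobi field into its component along the geodesic and its orthogonal (vertical) part, and then to use Lemma \ref{lem2.5.new1} on the vertical parts together with the explicit formula of Lemma \ref{lem2.3.new1}. Write $J_{\partial X} = Y_{\partial X} + \frac{\cos\theta}{\ell}\,t\,\mu'(t)$ and $\wt{J}_{\lambda} = \wt{Y}_{\lambda} + \frac{\cos\theta}{\ell}\,t\,\tilde{\mu}_{\lambda}'(t)$, where the tangential pieces are identical in both the manifold and the model. Since $\wt{Y}_{\lambda}(\ell) = \tilde{c}_{\lambda}'(0) - \frac{\cos\theta}{\ell}\cdot\ell\cdot\tilde{\mu}_{\lambda}'(\ell)$ has norm $\sin\theta$ (as $\angle(\tilde{\mu}_{\lambda}'(\ell),\tilde{c}_{\lambda}'(0)) = \theta$ by construction of $\tilde{c}_\lambda$), and likewise $\|Y_{\partial X}(\ell)\| = \sin\theta$, I would first reduce to the case of unit-norm vertical Jacobi fields by rescaling: set $\wt{Z}_{\lambda} := \wt{Y}_{\lambda}/\sin\theta$ and note it agrees (up to scaling) with the model $\partial\wt{X}_\lambda$-Jacobi field $\frac{m_\lambda(t)}{m_\lambda(\ell)}\wt{E}_\lambda(t)$ appearing in Lemma \ref{lem2.5.new1}, because the orthogonal $\partial\wt{X}_\lambda$-Jacobi fields on a rotationally symmetric surface are one-dimensional and given by that explicit formula.

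\textbf{Reducing to Lemma \ref{lem2.5.new1}.} With this reduction, $I_\ell(\wt{Y}_\lambda,\wt{Y}_\lambda) = \sin^2\theta\cdot I_\ell(\wt{Z}_\lambda,\wt{Z}_\lambda)$ and $\cI_{\partial X}^{\ell}(Y_{\partial X},Y_{\partial X}) = \sin^2\theta\cdot\cI_{\partial X}^{\ell}(Z_{\partial X},Z_{\partial X})$, where $Z_{\partial X} := Y_{\partial X}/\sin\theta$ is a $\partial X$-Jacobi field orthogonal to $\mu$ with $\|Z_{\partial X}(\ell)\|=1$. Applying Lemma \ref{lem2.5.new1} directly gives
\[
I_\ell(\wt{Y}_\lambda,\wt{Y}_\lambda) - \cI_{\partial X}^{\ell}(Y_{\partial X},Y_{\partial X}) \ge \sin^2\theta\cdot\frac{\lambda}{m_\lambda(\ell)^2}\int_0^\ell m_\lambda(t)^2\,dt.
\]
It remains to bound $\frac{1}{m_\lambda(\ell)^2}\int_0^\ell m_\lambda(t)^2\,dt$ from below by the constant $C_1 = \frac{1}{2m(\ell_0)^2}\int_0^{\ell_0}m(t)^2\,dt$, uniformly for all $\lambda$ in some interval $(0,\lambda_0)$ and all $\ell$ in the relevant range. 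Note $\ell = d(\partial X, p) \in [\ell_0 - \ve_0, \ell_0 + \ve_0]$ since $p \in B_{\ve_0}(r)$ and $d(\partial X,r) = \ell_0$; in particular $\ell$ is bounded away from $0$ and bounded above, uniformly.

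\textbf{The uniform estimate on $m_\lambda$ — the main obstacle.} The hard part will be the last step: controlling $m_\lambda$ and $\int_0^\ell m_\lambda^2$ as $\lambda \to 0$. Here I would use continuous dependence of solutions of the linear ODE $m_\lambda'' + (G-\lambda)m_\lambda = 0$, $m_\lambda(0)=1$, $m_\lambda'(0)=0$, on the parameter $\lambda$: on any fixed compact $t$-interval, $m_\lambda \to m$ uniformly as $\lambda \to 0^+$ (and $m_\lambda$ stays positive there, since $m$ is). Because $\ell$ ranges over the fixed compact interval $[\ell_0-\ve_0,\ell_0+\ve_0]$, I can choose $\lambda_0 = \lambda_0(\ell_0,\ve_0) > 0$ small enough that for all $\lambda \in (0,\lambda_0)$ and all such $\ell$ one has $m_\lambda(\ell)^2 \le 2\,m(\ell_0)^2 \cdot \big(\text{or a comparable explicit bound}\big)$ and $\int_0^\ell m_\lambda(t)^2\,dt \ge \int_0^{\ell_0}m(t)^2\,dt$ (shrinking $\lambda_0$ so the approximation error on $[0,\ell_0]$ is absorbed, and using $\ell \ge \ell_0 - \ve_0$; one may need to also arrange $\ve_0$ small relative to $\ell_0$, which is legitimate since $\ve_0 = \ve_0(r)$ was only required to satisfy \eqref{2.2} and can be taken smaller). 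Combining these two bounds yields $\frac{1}{m_\lambda(\ell)^2}\int_0^\ell m_\lambda(t)^2\,dt \ge \frac{1}{2m(\ell_0)^2}\int_0^{\ell_0}m(t)^2\,dt = C_1$, and the lemma follows. The only real subtlety is bookkeeping the dependence of $\lambda_0$ on $\ell_0$ and $\ve_0$ and making sure the choice is independent of $p$, $\xi$, and $\theta$ — which it is, since after the rescaling the geometric data $\xi$ and $\theta$ have disappeared from the estimate entirely.
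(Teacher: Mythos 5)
Your proposal is correct and follows essentially the same route as the paper's proof: rescale $\wt{Y}_{\lambda}$ and $Y_{\partial X}$ by $\sin\theta$ to unit-norm orthogonal Jacobi fields, invoke Lemma \ref{lem2.5.new1} to get the $\sin^2\theta$-weighted gap $\frac{\lambda\sin^2\theta}{m_\lambda(\ell)^2}\int_0^\ell m_\lambda^2\,dt$, and then use continuity of $m_\lambda$ in $\lambda$ (together with $|\ell-\ell_0|<\ve_0$) to bound that quantity below by $\lambda C_1\sin^2\theta$ uniformly for $\lambda\in(0,\lambda_0)$. Your explicit remark that $\ve_0$ may need to be shrunk for the continuity estimate to absorb the $\ell$-variation is a point the paper leaves implicit; otherwise the two arguments coincide.
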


\begin{proof}
Since 
\[
\wt{Y}_{\lambda} (l) = \tilde{c}_{\lambda}'(0) 
- \cos \theta \, \tilde{\mu}_{\lambda}'(l) = \pm \sin \theta \cdot \wt{E}_{\lambda} (l) 
= \pm \sin \theta \cdot \wt{Z}_{\lambda}(l),
\]
and since $\wt{Y}_{\lambda}$ is a $\partial \wt{X}_{\lambda}$-Jacobi field orthogonal to 
$\tilde{\mu}_{\lambda}$, we see 
\[
\wt{Y}_{\lambda}(t) 
= \pm \sin \theta \cdot\wt{Z}_{\lambda}(t)
\]
on $[0, l]$. Notice that any $\partial \wt{X}_{\lambda}$-Jacobi field orthogonal to 
$\tilde{\mu}_{\lambda}$ is equal to $a \wt{Z}_{\lambda} (t)$, $a \in \R$. 
Hence, we have 
\begin{equation}\label{lem2.5-2}
I_{l} (\wt{Y}_{\lambda}, \wt{Y}_{\lambda}) 
= \sin^{2}\theta  \cdot I_{l} (\wt{Z}_{\lambda}, \wt{Z}_{\lambda}).
\end{equation}
Similarly, we have, by Lemma \ref{lem2.3.new1},  
\[
Y_{\partial X}(t) 
= \sin \theta \cdot Z_{\partial X}(t) 
\]
for some $\partial X$-Jacobi field $Z_{\partial X}$ along $\mu$ 
orthogonal to $\mu$ with $\|Z_{\partial X} (l)\| = 1$.
Hence, we have 
\begin{equation}\label{lem2.5-3}
\cI_{\partial X}^{l} (Y_{\partial X}, Y_{\partial X}) 
= \sin^{2} \theta \cdot \cI_{\partial X}^{l} (Z_{\partial X}, Z_{\partial X}).
\end{equation}
By combining (\ref{lem2.5-2}) and (\ref{lem2.5-3}), 
we get, by Lemma \ref{lem2.5.new1}, 
\begin{align}\label{lem2.5-4}
I_{l} (\wt{Y}_{\lambda}, \wt{Y}_{\lambda}) 
- \cI_{\partial X}^{l} (Y_{\partial X}, Y_{\partial X}) 
&=
\sin^{2} \theta \left\{
I_{l} (\wt{Z}_{\lambda}, \wt{Z}_{\lambda}) 
- \cI_{\partial X}^{l} (Z_{\partial X}, Z_{\partial X})
\right\}\\[2mm]
&\ge
\frac{\lambda \sin^{2} \theta}{m_{\lambda}(l)^{2}} \int_{0}^{l}m_{\lambda}(t)^{2}\,dt.\notag
\end{align}
On the other hand, since $\lim_{\lambda \downarrow 0} m_{\lambda} (t) = m(t)$ 
and $|l - l_{0}| < \ve_{0}$, 
we may find a number $\lambda_{0} > 0$ such that
\begin{align}\label{lem2.5-5}
\frac{1}{m_{\lambda}(l)^{2}} \int_{0}^{l}m_{\lambda}(t)^{2}\,dt 
> 
\frac{1}{2m(l_{0})^{2}} \int_{0}^{l_{0}}m(t)^{2}\,dt
\end{align}
for all $\lambda \in (0, \lambda_{0})$. 
From (\ref{lem2.5-4}) and (\ref{lem2.5-5}), we have proved this lemma.
$\qedd$
\end{proof}

\begin{lemma}{\bf (Key lemma)}\label{lem2.6} 
For each $\lambda \in (0, \lambda_{0})$, there exists a number 
$\delta_{1} := \delta_{1}(\lambda) \in (0, \ve_{0})$ such that, for any $p \in B_{\ve_{0}}(r)$, 
any unit speed geodesic $\xi$ on $\Sph_{p}^{n -1}$ emanating from $\mu'(l)$, 
any $\theta \in [0, \pi]$, and any $\lambda \in (0, \lambda_{0})$, the inequality 
\[
L(s) \le \wt{L}_{\lambda} (s)
\]
holds for all $s \in [0, \delta_{1}]$, and the equality occurs if and only if $s = 0$, $\theta = 0$, 
or $\theta = \pi$. Here $\wt{L}_{\lambda} (s)$ denotes the length of the geodesic segment 
$\tilde{\varphi}^{(\lambda)}_{s}(\,\cdot\,) = \tilde{\varphi}^{(\lambda)} (\,\cdot\,,s)$ 
emanating perpendicularly from $\partial \wt{X}_{\lambda}$ to $\tilde{c}_{\lambda}(s)$.  
\end{lemma}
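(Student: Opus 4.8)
The plan is to compare the length functions $L$ and $\wt L_\lambda$ via their first two derivatives at $s=0$, using the Rauch-type estimate of Lemma~\ref{lem2.5} as the crucial input. By Lemma~\ref{lem2.3}, applied both in $X$ and in the model $\wt X_\lambda$, we have $L'(0)=\cos\theta=\wt L_\lambda'(0)$ and $L''(0)=\cI_{\partial X}^\ell(Y_{\partial X},Y_{\partial X})$, while $\wt L_\lambda''(0)=I_\ell(\wt Y_\lambda,\wt Y_\lambda)$ (the index form on the model has no boundary term since $\partial\wt X_\lambda$ is totally geodesic). Lemma~\ref{lem2.5} then gives $\wt L_\lambda''(0)-L''(0)\ge \lambda\,C_1\sin^2\theta$, a strictly positive gap whenever $\theta\in(0,\pi)$. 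Since $L(0)=\wt L_\lambda(0)=\ell$ as well, a second-order Taylor expansion around $s=0$ yields $\wt L_\lambda(s)-L(s)\ge \tfrac12\lambda C_1\sin^2\theta\, s^2 + o(s^2)$, which is positive for small $s>0$.

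The work is to upgrade this local, second-order statement into the asserted inequality on a \emph{uniform} interval $[0,\delta_1]$ with $\delta_1$ depending only on $\lambda$ (through $r$, hence $\ell_0$ and $\ve_0$), not on $p$, $\xi$ or $\theta$. First I would bound the third derivatives $L'''$ and $\wt L_\lambda'''$ uniformly: both $L$ and $\wt L_\lambda$ are lengths of geodesic segments in the fixed geodesic variations $\varphi$, $\tilde\varphi^{(\lambda)}$, whose domains and data vary in the compact set $\ol{B_{\ve_0}(r)}\times\Sph^{n-1}$ (with $\theta$ in the compact interval $[0,\pi]$ and $\ell$ within $\ve_0$ of $\ell_0$); standard Jacobi-field and curvature bounds on this compact family give a constant $C_2$, depending only on $\ell_0,\ve_0$, with $|L'''(s)|,|\wt L_\lambda'''(s)|\le C_2$ for all $s\in[0,\ve_1]$. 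Writing the exact Taylor remainder, for $s\in[0,\ve_1]$,
\[
\wt L_\lambda(s)-L(s)\ \ge\ \tfrac12\big(\wt L_\lambda''(0)-L''(0)\big)s^2-\tfrac13 C_2\, s^3\ \ge\ s^2\Big(\tfrac12\lambda C_1\sin^2\theta-\tfrac13 C_2 s\Big),
\]
which is $\ge 0$ once $s\le \tfrac{3\lambda C_1\sin^2\theta}{2C_2}$. This is not yet uniform in $\theta$, since the right side degenerates as $\theta\to 0$ or $\theta\to\pi$; I would handle the near-degenerate range separately by noting that in the model the warping structure forces $\wt Y_\lambda(t)=\pm\sin\theta\cdot\wt Z_\lambda(t)$ exactly, and that $L(s)$ is bounded above by the broken-geodesic length $\ell+s\cos\theta+O(s\sin^2\theta)$ in $X$ while $\wt L_\lambda(s)$ can be computed (or estimated from below) explicitly via $m_\lambda$ — comparing these gives the inequality for small $\theta$ directly, with a threshold $\delta_1$ not shrinking to $0$.

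The main obstacle is precisely this uniformity across all three parameters simultaneously, especially reconciling the $\sin^2\theta$-degeneracy: the second-derivative gap is $\Theta(\lambda\sin^2\theta)$, while a crude third-derivative bound is only $O(1)$, so the naive Taylor argument alone does not close the gap near $\theta=0,\pi$. The resolution is that near those endpoints the \emph{first} derivatives already differ: $L'(0)=\wt L_\lambda'(0)=\cos\theta$ is not quite enough, but one shows $L(s)\le \ell + s\cos\theta + \tfrac12 s^2 I_\ell(Y_{\partial X},Y_{\partial X})+O(s^3)$ with the quadratic coefficient itself $O(\sin^2\theta)$ (since $Y_{\partial X}=\sin\theta\cdot Z_{\partial X}$), and likewise in the model, so the \emph{entire} difference $\wt L_\lambda(s)-L(s)$ factors a $\sin^2\theta$ out of \emph{both} its $s^2$ and $s^3$ terms — after which dividing through by $\sin^2\theta$ reduces the problem to the non-degenerate comparison of $I_\ell(\wt Z_\lambda,\wt Z_\lambda)$ versus $\cI_{\partial X}^\ell(Z_{\partial X},Z_{\partial X})$, whose gap $\ge \lambda\cdot\frac{1}{m_\lambda(\ell)^2}\int_0^\ell m_\lambda^2$ is bounded below by $2\lambda C_1>0$ uniformly. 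Finally, for the equality clause: if $s>0$ and $\theta\in(0,\pi)$ the strict gap $\lambda C_1\sin^2\theta>0$ at second order, combined with the remainder estimate on $[0,\delta_1]$, forces $\wt L_\lambda(s)>L(s)$ strictly; and if $\theta=0$ or $\theta=\pi$ the variation is (to first order) tangent to $\mu$ resp.\ $\tilde\mu_\lambda$ and $c$, $\tilde c_\lambda$ are reparametrizations of the respective $\partial X$- and $\partial\wt X_\lambda$-segments, so $L(s)=\wt L_\lambda(s)=\ell\pm s$ identically, giving equality; while $s=0$ gives equality trivially.
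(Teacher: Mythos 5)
Your outline matches the paper's: Taylor-expand $L(s,\theta)$ and $\wt L_\lambda(s,\theta)$ about $s=0$, use Lemma \ref{lem2.5} for the second-order gap $\tfrac12\lambda C_1\sin^2\theta\,s^2$, and then observe that a crude $O(s^3)$ remainder bound cannot close the estimate near $\theta=0,\pi$ because the gap vanishes like $\sin^2\theta$. You also correctly name the resolution: the cubic remainder of the \emph{difference} must itself carry a $\sin^2\theta$ factor. But you do not prove this. The sentence ``with the quadratic coefficient itself $O(\sin^2\theta)$ (since $Y_{\partial X}=\sin\theta\cdot Z_{\partial X}$), and likewise in the model, \emph{so} the entire difference $\wt L_\lambda(s)-L(s)$ factors a $\sin^2\theta$ out of both its $s^2$ and $s^3$ terms'' is a non-sequitur: the factorization of the $s^2$ coefficient, which is transparent from $Y_{\partial X}=\sin\theta\cdot Z_{\partial X}$, says nothing about the $s^3$ coefficient. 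The ``broken geodesic'' estimate you gesture at for small $\theta$ only gives $L(s)\le\ell+s\cos\theta+O(s\sin\theta)$ by the triangle inequality, one power of $\sin\theta$ short of what is needed.

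The paper's proof supplies exactly this missing step, and it is not automatic. Writing $\cR(s,\theta)=\cR_1(s,\theta)s^3$ for the Taylor remainder, one observes from the local minimality of $\varphi_s$ that $L(s,\theta)\le\ell+s=L(s,0)$ and $L(s,\pi-\theta)\ge\ell-s=L(s,\pi)$ for small $|\theta|$; hence for each fixed $s>0$ the function $\theta\mapsto L(s,\theta)$ has a local maximum at $\theta=0$ and a local minimum at $\theta=\pi$. Differentiating the expansion (\ref{lem2.6-1.6new}) in $\theta$ and evaluating at $\theta=0,\pi$, where the explicit terms contribute nothing, yields $\partial_\theta\cR_1(s,0)=\partial_\theta\cR_1(s,\pi)=0$; combined with $\cR_1(s,0)=\cR_1(s,\pi)=0$ (from $L(s,0)=\ell+s$, $L(s,\pi)=\ell-s$), Taylor's theorem in $\theta$ gives $\cR_1(s,\theta)=\cR_2(s,\theta)\,\theta^2(\pi-\theta)^2$ with $\cR_2$ bounded on the compact parameter set. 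The elementary bound $\theta(\pi-\theta)/\sin\theta<\pi^2/2$ on $(0,\pi)$ then makes $\theta^2(\pi-\theta)^2$ comparable to $\sin^2\theta$, so one can factor $(s\sin\theta)^2$ from the whole estimate and choose $\delta_1$ uniformly. Without the local max/min argument your Taylor-remainder estimate does not close, as you yourself noticed when you discarded the naive third-derivative bound; filling it in would make your proof essentially the paper's.
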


\begin{proof} 
Although the angle $\theta$ has been fixed in the arguments 
above of this section, 
we consider here that $\theta$ is a variable. 
Hence, we denote $L(s)$ by $L(s, \theta)$, 
which is a smooth function of two variables $s$ and $\theta$ 
and depends smoothly on $p$, $\xi(0)$, and $\xi'(0)$. 
Furthermore, we define the reminder term $\cR (s, \theta)$ of the Taylor expansion of 
$L(s, \theta)$ about $s = 0$ by 
\begin{equation}\label{lem2.6-1} 
\cR (s, \theta) := L(s, \theta) - \left\{ L(0, \theta) + L'(0, \theta)s + \frac{1}{2!} L''(0, \theta)s^{2} \right\},
\end{equation}
where we set 
\[
L'(0, \theta) := \frac{\partial L}{\partial s} (0, \theta)
\quad {\rm and} \quad 
L''(0, \theta) := \frac{\partial^{2} L}{\partial s^{2}} (0, \theta).
\]
From (\ref{lem2.6-1}), Lemma \ref{lem2.3}, and the equation (\ref{lem2.5-3}) in the proof of 
Lemma \ref{lem2.5}, we have 
\begin{align}
L(s, \theta) 
&= l + s \cos \theta 
+ \frac{s^{2}}{2} \, \cI_{\partial X}^{l} (Y_{\partial X}, Y_{\partial X})
+ \cR (s, \theta)\label{lem2.6-1.5new}\\[2mm]
&
= l + s \cos \theta 
+ \frac{s^{2} \sin^{2} \theta}{2} \, \cI_{\partial X}^{l} (Z_{\partial X}, Z_{\partial X})
+ \cR (s, \theta).\label{lem2.6-1.6new} 
\end{align}
It is clear that 
\[
\cR (0, \theta) 
= \frac{\partial \cR}{\partial s} (0, \theta) 
= \frac{\partial^{2} \cR}{\partial s^{2}} (0, \theta)
= 0.
\]
Hence, there exists a smooth function $\cR_{1} (s, \theta)$ depending smoothly 
on $p$, $\xi(0)$, and $\xi'(0)$ such that 
\begin{equation}\label{lem2.6-2} 
\cR (s, \theta) = \cR_{1} (s, \theta) s^{3}.
\end{equation}
Since 
$B_{2\ve_{0}} (r) \cap \Focal (\partial X) = \emptyset$, 
the geodesic $\varphi_{s} (\,\cdot\,)$ is locally minimal for each $s \in (-\ve_{1}, \ve_{1})$. 
Hence, we may assume that the triangle inequalities 
\begin{equation}\label{lem2.6-3} 
L(s, \theta) \le l + s = L(s, 0)
\end{equation}
and 
\begin{equation}\label{lem2.6-4} 
L(s, \pi - \theta) \ge l - s = L(s, \pi)
\end{equation}
hold for all sufficiently small $|\theta|$ and all $s \in [0, \ve_{1})$. 
The equations (\ref{lem2.6-3}) and (\ref{lem2.6-4}) mean that, for each $s \in (0, \ve_{1})$, 
the function $L(s, \,\cdot\,)$ attains a local maximum (resp. minimum) at $\theta = 0$ 
(resp. $\theta = \pi$). 
Hence, by (\ref{lem2.6-1.6new}) and (\ref{lem2.6-2}),
\begin{equation}\label{lem2.6-5} 
\frac{\partial \cR_{1}}{\partial \theta} (s, 0) = \frac{\partial \cR_{1}}{\partial \theta} (s, \pi) = 0 
\end{equation}
for each $s \in [0, \ve_{1})$. 
Since $\cR_{1}(s, 0) = \cR_{1}(s, \pi) = 0$ holds on $[0, \ve_{1})$, we see, by (\ref{lem2.6-5}), 
that there exists a smooth function $\cR_{2}(s,\theta)$ such that  
\begin{equation}\label{lem2.6-6} 
\cR_{1}(s, \theta) = \cR_{2}(s, \theta) \theta^{2} (\pi - \theta)^{2}.
\end{equation}
By (\ref{lem2.6-2}) and  (\ref{lem2.6-6}), we have 
\begin{equation}\label{lem2.6-7} 
\cR(s, \theta) = \cR_{2}(s, \theta) \theta^{2} (\pi - \theta)^{2} s^{3}
\end{equation}
for all $\theta \in [0, \pi]$ and all $s \in [0, \ve_{1})$. 
On the other hand, 
since $\cR_{2}$ depends continuously on $p$, $\xi(0)$, and $\xi'(0)$, 
there exists a constant $C_{2} > 0$ such that  
\begin{equation}\label{lem2.6-8} 
|\cR_{2}(s, \theta)| \le C_{2}
\end{equation}
holds for all $p \in B_{\ve_{0}}(r)$, 
all $\xi$ on $\Sph^{n -1}_{p}$, all $\theta \in [0, \pi]$, and all $s \in [0, \ve_{1}/2]$. 
Thus, by (\ref{lem2.6-7}) and (\ref{lem2.6-8}), we obtain 
\begin{equation}\label{lem2.6-9} 
|\cR(s, \theta)| \le C_{2}\,\theta^{2} (\pi - \theta)^{2} s^{3}
\end{equation}
for all $p \in B_{\ve_{0}}(r)$, all $\xi$ on $\Sph^{n -1}_{p}$, 
all $\theta \in [0, \pi]$ and all $s \in [0, \ve_{1} / 2]$. 
Combining (\ref{lem2.6-1.5new}) and (\ref{lem2.6-9}), 
we get 
\begin{equation}\label{lem2.6-10} 
L(s, \theta) \le l + s \cos \theta + \frac{s^{2}}{2} \cI_{\partial X}^{l} (Y_{\partial X}, Y_{\partial X}) + 
C_{2}\,\theta^{2} (\pi - \theta)^{2} s^{3}.
\end{equation}
By applying the same argument above for $\wt{L}_{\lambda} (s) = \wt{L}_{\lambda} (s, \theta)$, 
there exists a constant $C_{3} > 0$ such that 
\begin{equation}\label{lem2.6-11} 
\wt{L}_{\lambda} (s, \theta) 
\ge 
l + s \cos \theta + \frac{s^{2}}{2} 
I_{l} (\wt{Y}_{\lambda}, \wt{Y}_{\lambda}) 
- C_{3}\,\theta^{2} (\pi - \theta)^{2} s^{3}
\end{equation}
holds for all $\theta \in [0, \pi]$ and all $s \in [0, \ve_{1}/2]$. 
From Lemma \ref{lem2.5}, (\ref{lem2.6-10}), and (\ref{lem2.6-11}), 
it follows that 
\begin{align}\label{lem2.6-12} 
\wt{L}_{\lambda} (s, \theta) - L(s, \theta) 
&\ge
\frac{s^{2}}{2} 
\left\{
I_{l} (\wt{Y}_{\lambda}, \wt{Y}_{\lambda}) 
- \cI_{\partial X}^{l} (Y_{\partial X}, Y_{\partial X}) 
\right\} - (C_{3} + C_{2})\theta^{2} (\pi - \theta)^{2} s^{3}\\[2mm]
&\ge
\frac{\lambda C_{1} \sin^{2} \theta}{2} s^{2}
- (C_{3} + C_{2})\theta^{2} (\pi - \theta)^{2} s^{3}\notag\\[2mm]
&\ge
\frac{\lambda C_{1} \sin^{2} \theta}{2} s^{2}
- 2C_{4}\,\theta^{2} (\pi - \theta)^{2} s^{3}\notag
\end{align}
holds for all $\theta \in [0, \pi]$ and all $s \in [0, \ve_{1}/2]$. 
Here we set $C_{4} := \max\{ C_{2}, C_{3} \}$.\par 
Since 
\[
\frac{x}{\sin x} < \frac{\pi}{2}
\]
for all $x \in (0, \pi/2)$, 
\begin{equation}\label{lem2.6-13} 
\frac{\theta}{\sin \theta} \cdot (\pi - \theta) 
< \frac{\pi}{2} \cdot (\pi - \theta)
< \frac{\pi^{2}}{2}
\end{equation}
holds on $(0, \pi/ 2)$, and 
\begin{equation}\label{lem2.6-14} 
\frac{\pi - \theta}{\sin \theta} \cdot \theta = \frac{\pi - \theta}{\sin (\pi - \theta)} \cdot \theta 
< \frac{\pi}{2} \cdot \theta 
< \frac{\pi^{2}}{2}
\end{equation}
also holds on $(\pi / 2, \pi)$. 
Hence, by (\ref{lem2.6-13}) and (\ref{lem2.6-14}), we see 
\begin{equation}\label{lem2.6-15} 
\frac{\theta (\pi - \theta)}{\sin \theta} < \frac{\pi^{2}}{2}
\end{equation}
on $(0, \pi)$. 
If we define 
\[
\delta_{1} 
:= \min 
\left\{ 
 \frac{\ve_{1}}{2}_, \frac{\lambda C_{1}}{\pi^{4} C_{4}}
\right\} 
\left(
\le \frac{\ve_{1}}{2} < \ve_{0}
\right),
\]
then, by (\ref{lem2.6-15}), 
\begin{align}\label{lem2.6-16} 
\frac{\lambda C_{1} \sin^{2} \theta}{2} s^{2}
- 2C_{4}\,\theta^{2} (\pi - \theta)^{2} s^{3}
&=
\frac{(s \cdot \sin \theta)^{2}}{2}
\left[
\lambda C_{1}
- 
4 C_{4} 
\left\{
\frac{\theta (\pi - \theta)}{\sin \theta}
\right\}^{2}
s
\right]\\[2mm]
&>
\frac{(s \cdot \sin \theta)^{2}}{2}
\left(
\lambda C_{1}
- 
\pi^{4} C_{4} s
\right)
\notag\\[2mm]
&\ge
0\notag
\end{align}
holds for all $s \in [0, \delta_{1}]$ and all $\theta \in (0, \pi)$. 
Therefore, by (\ref{lem2.6-12}) and (\ref{lem2.6-16}), the proof is completed.
$\qedd$
\end{proof}

\section{Thin open triangles}\label{sec:LTOT}
Throughout this section, let $(\wt{X}, \partial \wt{X})$ denote a model surface 
with its metric (\ref{model-metric}). 

\begin{lemma}\label{lem3.1}
Let $\tilde{\mu} : [0, l] \rightarrow \wt{X}$ be a $\partial \wt{X}$-segment. 
Then, for each 
\[
0 < s < \min \{\inj (\tilde{\mu}(l)), l\},
\]
the function $d(\partial \wt{X}, \exp_{\tilde{\mu}(l)}(s \, \tilde{\xi}(\theta))$ 
is strictly increasing on $[0, \pi]$. Here 
\[
\tilde{\xi} : \R \rightarrow \Sph^{1}_{\tilde{\mu}(l)} 
:= \{ \tilde{v} \in T_{\tilde{\mu}(l)} \wt{X} \ ; \ \| \tilde{v} \| = 1 \}
\] 
denotes a unit speed geodesic segment on $\Sph^{1}_{\tilde{\mu}(l
)}$ 
emanating from $- \tilde{\mu}'(l
) = \tilde{\xi}(0)$. 
\end{lemma}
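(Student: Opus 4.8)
The plan is to differentiate the given function in $\theta$ and to read off its derivative as the pairing of a Jacobi field with the gradient of the distance-to-boundary function. First I would record the elementary geometry of the model: since the metric of $\wt X$ is $d\tilde x^{2}+m(\tilde x)^{2}d\tilde y^{2}$ on $[0,\infty)\times\R$ and $\partial\wt X=\{\tilde x=0\}$, any curve joining $\partial\wt X$ to a point $\tilde q$ has length at least $\tilde x(\tilde q)$, with equality along the meridian $\tilde y\equiv\tilde y(\tilde q)$; hence $d(\partial\wt X,\,\cdot\,)=\tilde x$ on $\wt X$, the $\partial\wt X$-rays are exactly the meridians, and $\nabla\tilde x=\partial/\partial\tilde x$ is a unit vector field. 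In particular the $\partial\wt X$-segment $\tilde\mu$ is a subarc of the meridian through $\tilde p:=\tilde\mu(\ell)$, so $\tilde\mu'(\ell)=\partial/\partial\tilde x|_{\tilde p}$ and $\tilde\xi(0)=-\partial/\partial\tilde x|_{\tilde p}$.

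Fix $s$ with $0<s<\min\{\inj(\tilde p),\ell\}$ and consider the geodesic variation $f(t,\theta):=\exp_{\tilde p}(t\,\tilde\xi(\theta))$ on $[0,s]\times[0,\pi]$. Write $c_\theta:=f(\,\cdot\,,\theta)$ and let $V_\theta(t):=(\partial f/\partial\theta)(t,\theta)$ be the induced Jacobi field along $c_\theta$, so $V_\theta(0)=0$ and $V_\theta'(0)=\tilde\xi'(\theta)$. Since $\tilde\xi$ is a unit-speed geodesic of $\Sph^{1}_{\tilde p}$, the pair $\{\tilde\xi(\theta),\tilde\xi'(\theta)\}$ is orthonormal, hence $V_\theta$ is orthogonal to $c_\theta$ throughout. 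Because $s<\ell$, along each $c_\theta$ one has $\tilde x\ge\ell-t>0$, so $c_\theta$ stays in $\wt X\setminus\partial\wt X$ and $\tilde x$ is smooth along it; differentiating then gives $h'(\theta)=\langle\,\partial/\partial\tilde x,\,V_\theta(s)\,\rangle$ at $c_\theta(s)$, where $h(\theta):=d(\partial\wt X,c_\theta(s))=\tilde x(c_\theta(s))$. So it suffices to prove $g_\theta(s)>0$ for every $\theta\in(0,\pi)$, where $g_\theta(t):=\langle\,\partial/\partial\tilde x,\,V_\theta(t)\,\rangle$ along $c_\theta$.

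Fix $\theta\in(0,\pi)$. Expanding $\partial/\partial\tilde x|_{\tilde p}$ in the orthonormal basis $\{\tilde\xi(\theta),\tilde\xi'(\theta)\}$ gives $\partial/\partial\tilde x|_{\tilde p}=-\cos\theta\,\tilde\xi(\theta)+\sin\theta\,\tilde\xi'(\theta)$, so $g_\theta(0)=0$ and $g_\theta'(0)=\langle\,\partial/\partial\tilde x|_{\tilde p},\,V_\theta'(0)\,\rangle=\langle\,\partial/\partial\tilde x|_{\tilde p},\,\tilde\xi'(\theta)\,\rangle=\sin\theta>0$; hence $g_\theta>0$ on some initial interval $(0,t_1)$. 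Suppose $g_\theta$ vanished somewhere on $(0,s]$, and let $t_0$ be the least such value. Since $t_0\le s<\inj(\tilde p)$, the map $\exp_{\tilde p}$ is a diffeomorphism near $t_0\tilde\xi(\theta)$, so $V_\theta(t_0)=d(\exp_{\tilde p})_{t_0\tilde\xi(\theta)}\big(t_0\tilde\xi'(\theta)\big)\ne0$; being orthogonal to the unit vector $c_\theta'(t_0)$, it spans the normal line of $c_\theta$ at $c_\theta(t_0)$. Then $g_\theta(t_0)=0$ forces $\partial/\partial\tilde x$ to be tangent to $c_\theta$ there, hence, both being unit, $\partial/\partial\tilde x|_{c_\theta(t_0)}=\pm c_\theta'(t_0)$; since $\partial/\partial\tilde x$ is the velocity field of the meridians, uniqueness of geodesics makes $c_\theta$ coincide with the meridian through $c_\theta(t_0)$, whence $\tilde\xi(\theta)=c_\theta'(0)=\pm\,\partial/\partial\tilde x|_{\tilde p}=\mp\,\tilde\xi(0)$, i.e. $\theta\in\{0,\pi\}$ --- a contradiction. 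Therefore $g_\theta>0$ on $(0,s]$, so $h'(\theta)>0$ for all $\theta\in(0,\pi)$; as $h$ is smooth (hence continuous) on $[0,\pi]$, it is strictly increasing there.

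I expect the last step --- excluding a zero of $g_\theta$ on $(0,s]$, equivalently a tangency of $c_\theta$ with a meridian within arclength $s$ --- to be the only genuinely delicate point; this is precisely where the exclusion $\theta\in(0,\pi)$ and the uniqueness of geodesics are used. The hypothesis $s<\inj(\tilde p)$ enters only to guarantee that $V_\theta$ does not vanish first (no conjugate point of $\tilde p$ along $c_\theta$), and $s<\ell$ is needed only to keep the geodesics $c_\theta$ inside $\wt X\setminus\partial\wt X$, where $\tilde x$ is smooth.
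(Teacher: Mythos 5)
Your proof is correct and follows exactly the approach the paper has in mind: the paper's entire proof of Lemma~\ref{lem3.1} is the single sentence ``This lemma is clear from the first variation formula,'' and what you have done is supply the details behind that claim --- the first variation identifies $h'(\theta)$ with $\langle\partial/\partial\tilde x,V_\theta(s)\rangle$, and positivity follows from the initial conditions $g_\theta(0)=0$, $g_\theta'(0)=\sin\theta>0$ together with the observation that $g_\theta$ cannot vanish on $(0,s]$ without forcing $c_\theta$ to be a meridian, i.e.\ $\theta\in\{0,\pi\}$.
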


\begin{proof}
This lemma is clear from the first variation formula. 
$\qedd$
\end{proof}

\medskip

The next lemma is a direct consequence of the Clairaut relation (\cite[Theorem 7.1.2]{SST}) 
and the first variational formula\,:

\begin{lemma}\label{lem4.1}
For each constant $c \ge 0$, and each point $\tilde{p} \in \wt{X}$, 
$d(\tilde{p}, \tilde{\tau}_{c}(s))$ is strictly increasing on $[\tilde{y}(\tilde{p}), \infty)$. 
Here $\tilde{\tau}_{c}(s) := (c, s) \in \wt{X}$ denote the arc of $\tilde{x} = c$.
\end{lemma}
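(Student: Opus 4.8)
The plan is to reduce the monotonicity statement to the Clairaut relation together with a first-variation argument, exactly as in the proof of \cite[Theorem 7.1.2]{SST}. Fix the constant $c \ge 0$ and the point $\tilde{p} \in \wt{X}$, and write $f(s) := d(\tilde{p}, \tilde{\tau}_{c}(s))$ for $s \ge \tilde{y}(\tilde{p})$. The first thing I would do is observe that for each such $s$ there is a minimal geodesic segment $\tilde{\sigma}_{s}$ from $\tilde{p}$ to $\tilde{\tau}_{c}(s)$, and by the first variation formula, wherever $f$ is differentiable,
\[
f'(s) = \bigl\langle \tilde{\sigma}_{s}'(f(s)),\, \partial_{\tilde{y}}\bigr\rangle / \|\partial_{\tilde{y}}\|,
\]
i.e.\ $f'(s)$ has the sign of the inner product of the terminal velocity of $\tilde{\sigma}_{s}$ with the coordinate vector field $\partial_{\tilde{y}}$ along the parallel $\tilde{x} = c$. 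So it suffices to show this inner product is strictly positive for $s > \tilde{y}(\tilde{p})$.

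Next I would invoke the Clairaut relation for the warped product metric $d\tilde{x}^2 + m(\tilde{x})^2 d\tilde{y}^2$: along any geodesic, the quantity $m(\tilde{x})^2 \,(d\tilde{y}/dt)$ (the ``angular momentum'') is constant. Parametrize $\tilde{\sigma}_{s}$ on $[0, f(s)]$ from $\tilde{p}$; since $\tilde{y}$ is strictly increasing in $s$ along the target parallel and $\tilde{\sigma}_s$ is a \emph{minimal} connection, the $\tilde{y}$-coordinate along $\tilde{\sigma}_s$ must increase from $\tilde{y}(\tilde{p})$ to $s > \tilde{y}(\tilde{p})$, forcing $d\tilde{y}/dt > 0$ somewhere, hence (by Clairaut) $d\tilde{y}/dt > 0$ \emph{everywhere} along $\tilde{\sigma}_s$, and in particular at the endpoint. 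Since $\partial_{\tilde{y}}$ points in the direction of increasing $\tilde{y}$, the inner product $\bigl\langle \tilde{\sigma}_{s}'(f(s)), \partial_{\tilde{y}}\bigr\rangle = m(c)^2\,(d\tilde{y}/dt)|_{t = f(s)}$ is strictly positive. This gives $f'(s) > 0$ at every point of differentiability.

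Finally I would upgrade ``positive derivative almost everywhere'' to ``strictly increasing on the whole interval.'' The function $f$ is Lipschitz (being a distance function restricted to a curve), hence differentiable a.e.\ and equal to the integral of its derivative; combined with $f' > 0$ wherever it exists, this yields strict monotonicity of $f$ on $[\tilde{y}(\tilde{p}), \infty)$. The one point requiring a little care — and the likely main obstacle — is handling the non-differentiability of $f$ at parameters $s$ where the minimal segment $\tilde{\sigma}_s$ is not unique (cut points of $\tilde p$ on the parallel); but at such points one still has $D^{+}f(s) \le \bigl\langle \tilde{\sigma}_{s}'(f(s)), \partial_{\tilde y}\bigr\rangle$ for \emph{every} minimal segment and the Clairaut sign argument applies to each of them, so the one-sided derivative estimates suffice and the conclusion is unaffected.
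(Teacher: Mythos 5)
Your argument is correct and is precisely the Clairaut‐plus‐first‐variation route the paper alludes to (the paper offers no further detail, citing \cite[Theorem 7.1.2]{SST} and the first variation formula). Two small slips, neither fatal: the division by $\|\partial/\partial\tilde{y}\|$ in your first‐variation formula is spurious since $\tilde{\tau}_{c}$ is parametrized by $\tilde{y}$ rather than by arc length, and in your closing paragraph the inequality $D^{+}f(s)\le \langle \tilde{\sigma}_{s}'(f(s)), \partial/\partial\tilde{y}\rangle$ runs the wrong way to help (it is an upper bound, whereas monotonicity needs a lower bound on the lower one‐sided Dini derivative) --- but your Lipschitz‐plus‐almost‐everywhere argument already disposes of the non‐differentiable points, so the extra paragraph is unnecessary and the proof stands.
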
 

\medskip

By Lemma \ref{lem4.1}, we have

\begin{lemma}\label{lem3.2}
Let 
${\rm OT}(\partial \wt{X}, \tilde{p}_{1}, \tilde{q}_{1})$ and 
${\rm OT}(\partial \wt{X}, \tilde{p}_{2}, \tilde{q}_{2})$ 
be open triangles in $\wt{X}$ such that 
\begin{equation}\label{lem3.2-length1}
d(\partial \wt{X}, \tilde{q}_{1}) = d(\partial \wt{X}, \tilde{p}_{2}),
\end{equation}
and that 
\begin{equation}\label{lem3.2-angle1}
\angle\,\tilde{q}_{1} + \angle\,\tilde{p}_{2} \le \pi.
\end{equation}
If 
\[
d(\tilde{p}_{1}, \tilde{q}_{1}) + d(\tilde{p}_{2}, \tilde{q}_{2}) < \inj (\tilde{p}_{1}),
\]
then there exists an open triangle ${\rm OT}(\partial \wt{X}, \tilde{p}, \tilde{q})$ such that 
\begin{equation}\label{lem3.2-length2}
d(\partial \wt{X}, \tilde{p}) = d(\partial \wt{X}, \tilde{p}_{1}), \quad 
d(\tilde{p}, \tilde{q}) = d(\tilde{p}_{1}, \tilde{q}_{1}) + d(\tilde{p}_{2}, \tilde{q}_{2}), \quad 
d(\partial \wt{X}, \tilde{q}) = d(\partial \wt{X}, \tilde{q}_{2}),
\end{equation}
and that 
\begin{equation}\label{lem3.2-angle2}
\angle\,\tilde{p}_{1} \ge \angle\,\tilde{p}.
\end{equation}
\end{lemma}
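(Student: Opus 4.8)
The idea is to build the triangle $(\partial\wt X,\tilde p,\tilde q)$ by concatenating the second triangle onto the first along a common meridian. First I would normalize the picture: by the isometry $(\tilde p,\tilde q)\mapsto(\tilde p,\tilde q+c)$ of $\wt X$, I may assume $\tilde q_1$ and $\tilde p_2$ lie on the same meridian $\tilde y=c_0$, and in fact I may arrange the first triangle so that $\tilde\mu^{(1)}_2$ runs ``upward'' and the second so that $\tilde\mu^{(2)}_1$ runs ``downward'' from that common meridian, i.e. the two triangles sit on opposite sides of the line $\tilde y=c_0$. Because $d(\partial\wt X,\tilde q_1)=d(\partial\wt X,\tilde p_2)$, the points $\tilde q_1$ and $\tilde p_2$ have the same $\tilde x$-coordinate; sliding the second triangle by the $\tilde y$-isometry, I put $\tilde p_2=\tilde q_1=:\tilde r$. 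Now I have a piecewise geodesic path $\tilde\gamma_1 * \tilde\gamma_2$ from $\tilde p:=\tilde p_1$ to $\tilde q:=\tilde q_2$ of length $d(\tilde p_1,\tilde q_1)+d(\tilde p_2,\tilde q_2)$, with a single breakpoint at $\tilde r$, together with the two $\partial\wt X$-segments $\tilde\mu^{(1)}_1$ to $\tilde p$ and $\tilde\mu^{(2)}_2$ to $\tilde q$.

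Next I would pass to an honest geodesic triangle. Let $\tilde\gamma$ be a minimal geodesic segment from $\tilde p$ to $\tilde q$; since $d(\tilde p,\tilde q)=d(\tilde p_1,\tilde q_1)+d(\tilde p_2,\tilde q_2)<\inj(\tilde p_1)=\inj(\tilde p)$, this segment is unique and depends continuously on endpoints, and in particular $L(\tilde\gamma)\le L(\tilde\gamma_1*\tilde\gamma_2)$ with equality iff the broken path was already geodesic. Because $\tilde\mu^{(1)}_1$ is still a $\partial\wt X$-segment to $\tilde p$ and $\tilde\mu^{(2)}_2$ is still a $\partial\wt X$-segment to $\tilde q$, the data $(\tilde\gamma,\tilde\mu^{(1)}_1,\tilde\mu^{(2)}_2)$ form an open triangle $(\partial\wt X,\tilde p,\tilde q)$ realizing the prescribed side lengths (\ref{lem3.2-length2}): the two $\partial\wt X$-side lengths are unchanged and the $\tilde p\tilde q$-side has length $d(\tilde p_1,\tilde q_1)+d(\tilde p_2,\tilde q_2)$ by construction. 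So only the angle inequality (\ref{lem3.2-angle2}) remains.

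For the angle estimate I would compare $\angle\,\tilde p$, the angle at $\tilde p$ in the new triangle between $\tilde\gamma'(0)$ and $-\,(\tilde\mu^{(1)}_1)'(d(\partial\wt X,\tilde p))$, with $\angle\,\tilde p_1$, the corresponding angle in the first triangle between $\tilde\gamma_1'(0)$ and the same incoming $\partial\wt X$-direction. Since the incoming vector is identical, it suffices to show that $\tilde\gamma'(0)$ makes an angle with $\tilde\gamma_1'(0)$ at $\tilde p$ that is at most the ``defect'' available, i.e. that replacing the broken path by the straight one only rotates the initial direction toward the $\partial\wt X$-segment. This is where the hypothesis (\ref{lem3.2-angle1}), $\angle\,\tilde q_1+\angle\,\tilde p_2\le\pi$, enters: it says the two triangles meet at $\tilde r$ along the common meridian making a ``reflex-free'' corner, so that the broken path $\tilde\gamma_1*\tilde\gamma_2$ bends away from the side of $\partial\wt X$ on which $\tilde p$ sits; straightening it therefore decreases the angle at $\tilde p$. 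I expect the cleanest way to make this rigorous is a first-variation / monotonicity argument: deform $\tilde\gamma_1*\tilde\gamma_2$ to $\tilde\gamma$ through the family of minimal geodesics from $\tilde p$ to the points moving along $\tilde\gamma_2$ (well-defined by the injectivity-radius bound), and track how the angle at $\tilde p$ with the fixed $\partial\wt X$-direction changes; the sign of the derivative is governed exactly by the corner condition (\ref{lem3.2-angle1}) via the first variation formula at the moving endpoint.

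\textbf{Main obstacle.} The genuinely delicate point is the angle monotonicity (\ref{lem3.2-angle2}): one must check that the angle condition $\angle\,\tilde q_1+\angle\,\tilde p_2\le\pi$ at the junction really does force the initial direction of the straightened geodesic to lie between $\tilde\gamma_1'(0)$ and $-(\tilde\mu^{(1)}_1)'$, rather than overshooting to the other side. Handling this requires being careful about which side of the meridian $\tilde y=c_0$ each triangle lies on and controlling the geodesic triangle in $\wt X$ via the injectivity-radius hypothesis so that all the minimal geodesics in the deformation are unique; the side-length conditions and (\ref{lem3.2-angle1}) are used precisely to pin down the orientation. Everything else — placing the triangles on a common meridian, concatenating, invoking uniqueness of $\tilde\gamma$ — is routine once the model surface's $\tilde y$-translation symmetry and Lemma \ref{lem4.1} are in hand.
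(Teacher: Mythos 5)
Your construction breaks down at the side-length condition, and this is not a small gap but the crux of the lemma. You set $\tilde q:=\tilde q_2$ and let $\tilde\gamma$ be the minimal geodesic from $\tilde p$ to $\tilde q$, then claim the $\tilde p\tilde q$-side of the resulting triangle has length $d(\tilde p_1,\tilde q_1)+d(\tilde p_2,\tilde q_2)$. But you yourself observe a sentence earlier that $L(\tilde\gamma)\le L(\tilde\gamma_1*\tilde\gamma_2)$ with equality iff the broken path is already a geodesic; and by hypothesis the corner angle at $\tilde r$ sums to at most $\pi$ and you explicitly allow strict inequality. So in general $d(\tilde p,\tilde q_2)=L(\tilde\gamma)<d(\tilde p_1,\tilde q_1)+d(\tilde p_2,\tilde q_2)$, and the triangle $(\partial\wt X,\tilde p,\tilde q_2)$ does \emph{not} satisfy (\ref{lem3.2-length2}). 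The whole point of the lemma is to produce a comparison triangle whose hypotenuse is the \emph{sum} of the two hypotenuse lengths, not the (generally shorter) straight-line distance.

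The missing idea, which is what the paper does, is that $\tilde q$ cannot stay at $\tilde q_2$: you must slide the target point along the parallel $\tilde\tau(s)=(\tilde x(\tilde q_2),s)$ (keeping $d(\partial\wt X,\cdot)$ fixed at $d(\partial\wt X,\tilde q_2)$) until, by Lemma \ref{lem4.1} and the intermediate value theorem, you reach a point $\tilde\tau(s_1)$ with $d(\tilde p_1,\tilde\tau(s_1))$ exactly equal to the required sum; this is possible inside $B_a(\tilde p_1)$ because $d(\tilde p_1,\tilde q_2)<a<\inj(\tilde p_1)$. Then one must show the arc $\tilde\tau|_{[\tilde y(\tilde q_2),s_1]}$ stays inside the sector $\cA(\tilde p_1)=B_a(\tilde p_1)\cap\theta^{-1}(0,\theta(\tilde q_1))$ (here $\theta$ is a polar angle at $\tilde p_1$ with $\theta=0$ on $\tilde\mu_1^{(1)}$); the paper does this by a contradiction using Lemma \ref{lem4.1} and the fact that the geodesic extension of $\tilde\gamma_1$ is minimal up to radius $a$. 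Once the endpoint and the whole arc are in the closure of $\cA(\tilde p_1)$, the minimal geodesic from $\tilde p_1$ to $\tilde\tau(s_1)$ also lies in that closure, which forces $\angle\,\tilde p_1\ge\angle\,\tilde p$ directly, without the first-variation deformation you gesture at. Your first-variation plan, besides being unworked-out, also starts from the wrong endpoint; even repaired, it would have to be rerouted through this ``slide the vertex along a parallel'' construction to realize the correct side lengths.
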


\begin{proof}
Let 
\[
{\rm OT}(\partial \wt{X}, \tilde{p}_{1}, \tilde{q}_{1}) 
:=(\partial \wt{X}, \tilde{p}_{1}, \tilde{q}_{1}\, ; \, 
\tilde{\gamma}_{1}, \tilde{\mu}^{(1)}_{1}, \tilde{\mu}^{(1)}_{2})
\]
and 
\[
{\rm OT}(\partial \wt{X}, \tilde{p}_{2}, \tilde{q}_{2}) 
:= 
(\partial \wt{X}, \tilde{p}_{2}, \tilde{q}_{2}\,;\,
\tilde{\gamma}_{2}, \tilde{\mu}^{(2)}_{1}, \tilde{\mu}^{(2)}_{2})
\] 
be open triangles in $\wt{X}$ satisfying (\ref{lem3.2-length1}) and (\ref{lem3.2-angle1}), and 
we fix them. 
By (\ref{lem3.2-length1}), we may assume that 
${\rm OT}(\partial \wt{X}, \tilde{p}_{2}, \tilde{q}_{2})$ is adjacent to 
${\rm OT}(\partial \wt{X}, \tilde{p}_{1}, \tilde{q}_{1})$ as a common side 
$\tilde{\mu}^{(1)}_{2} = \tilde{\mu}^{(2)}_{1}$, and that 
$\tilde{y}(\tilde{p}_{1}) < \tilde{y}(\tilde{q}_{1}) = \tilde{y}(\tilde{p}_{2}) < \tilde{y}(\tilde{q}_{2})$. 
Choose any number 
\[
a \in (d(\tilde{p}_{1}, \tilde{q}_{1}) + d(\tilde{p}_{2}, \tilde{q}_{2}), \inj (\tilde{p}_{1})),
\]
and fix it. We will introduce geodesic polar coordinates $(r, \theta)$ 
around $\tilde{p}_{1}$ on $B_{a}(\tilde{p}_{1})$ such that $\theta = 0$ on 
$\tilde{\mu}^{(1)}_{1} \cap B_{a}(\tilde{p}_{1})$, and that 
$0 < \theta (\tilde{q}_{2}) \le \theta (\tilde{q}_{1}) \le \pi$. 
Notice that 
\[
\tilde{q}_{2} \in B_{a}(\tilde{p}_{1}).
\]
In fact, from the triangle inequality, we have 
\[
d(\tilde{p}_{1}, \tilde{q}_{2}) 
\le d(\tilde{p}_{1}, \tilde{q}_{1}) + d(\tilde{q}_{1}, \tilde{q}_{2}) 
= d(\tilde{p}_{1}, \tilde{q}_{1}) + d(\tilde{p}_{2}, \tilde{q}_{2}) < a.
\]
Since there is nothing to prove if $\angle\,\tilde{q}_{1} + \angle\,\tilde{p}_{2} = \pi$, 
we may assume, by (\ref{lem3.2-angle1}), that 
\begin{equation}\label{lem3.2-1}
\angle\,\tilde{q}_{1} + \angle\,\tilde{p}_{2} < \pi.
\end{equation}
Hence, $\tilde{q}_{2}$ is in $\cA(\tilde{p}_{1})$,
where $\cA(\tilde{p}_{1})$ is a domain defined by 
\[
\cA(\tilde{p}_{1}) := B_{a}(\tilde{p}_{1}) \cap \theta^{-1} (0, \theta (\tilde{q}_{1})).
\]
Let $\tilde{\tau} : [\tilde{y}(\tilde{q}_{2}), \infty) \rightarrow \wt{X}$ be an arc of 
$\tilde{x} = \tilde{x}(\tilde{q}_{2})$ emanating from 
$\tilde{q}_{2} = \tilde{\tau} (\tilde{y}(\tilde{q}_{2})) \in \cA(\tilde{p}_{1})$
given by $\tilde{\tau}(s) := (\tilde{x}(\tilde{q}_{2}), s)$.
By (\ref{lem3.2-1}), we get 
\[
d(\tilde{p}_{1}, \tilde{\tau} (\tilde{y}(\tilde{q}_{2}))) 
< d(\tilde{p}_{1}, \tilde{q}_{1}) + d(\tilde{q}_{1}, \tilde{q}_{2}) 
= d(\tilde{p}_{1}, \tilde{q}_{1}) + d(\tilde{p}_{2}, \tilde{q}_{2}) < a.
\]
Since $\lim_{s \to \infty} d(\tilde{p}_{1}, \tilde{\tau}(s)) = \infty$, 
it follows from the intermediate value theorem that 
there exists a number $s_{0} \in (\tilde{y}(\tilde{q}_{2}), \infty)$ satisfying 
$d(\tilde{p}_{1}, \tilde{\tau} (s_{0})) = a$, 
and furthermore that there exists a number $s_{1} \in (\tilde{y}(\tilde{q}_{2}), s_{0})$ satisfying 
\begin{equation}\label{lem3.2-2}
d(\tilde{p}_{1}, \tilde{\tau} (s_{1})) 
= d(\tilde{p}_{1}, \tilde{q}_{1}) + d(\tilde{q}_{1}, \tilde{q}_{2}) 
= d(\tilde{p}_{1}, \tilde{q}_{1}) + d(\tilde{p}_{2}, \tilde{q}_{2}).
\end{equation}
We will prove that the subarc $\tilde{\tau}|_{[\tilde{y}(\tilde{q}_{2}), \,s_{1}]}$ is 
contained in $\cA(\tilde{p}_{1})$. 
Suppose that there exists a number $s_{2} \in (\tilde{y}(\tilde{q}_{2}), s_{1}]$ such that 
$\tilde{\tau} (s_{2})$ is not in $\cA(\tilde{p}_{1})$.
Since the subarc $\tilde{\tau}|_{[\tilde{y}(\tilde{q}_{2}), \,s_{2}]}$ lies in $B_{a}(\tilde{p}_{1})$, 
there exists $s_{3} \in (\tilde{y}(\tilde{q}_{2}), s_{2}]$ such that 
\begin{equation}\label{lem3.2-3}
\theta (\tilde{\tau} (s_{3})) = \theta (\tilde{q}_{1}).
\end{equation}
Since $\tilde{y}(\tilde{q}_{2}) < s_{3}$, 
we have, by Lemma \ref{lem4.1}, 
\begin{equation}\label{lem3.2-4}
d(\tilde{q}_{1}, \tilde{q}_{2}) < d(\tilde{q}_{1}, \tilde{\tau} (s_{3})).
\end{equation}
By (\ref{lem3.2-3}), we see that 
the geodesic extension $\tilde{\sigma} :[0, d(\tilde{p}_{1}, \tilde{\tau} (s_{3}))] \rightarrow
 \wt{X}$ 
of $\tilde{\gamma}_{1}$ meets $\tilde{\tau}$ at 
$\tilde{\tau} (s_{3}) = \tilde{\sigma} (d(\tilde{p}_{1}, \tilde{\tau} (s_{3})))$. 
Notice that the geodesic segment $\tilde{\sigma}$ is minimal, 
since 
\[
\tilde{\tau} (s_{3}) \in B_{a} (\tilde{p}_{1}) \subset B_{\inj(\tilde{p}_{1})} (\tilde{p}_{1}).
\]
Thus, by (\ref{lem3.2-2}) and (\ref{lem3.2-4}), we have 
\begin{equation}\label{lem3.2-5}
d(\tilde{p}_{1}, \tilde{\tau} (s_{3})) 
= 
d(\tilde{p}_{1}, \tilde{q}_{1}) + d(\tilde{q}_{1}, \tilde{\tau} (s_{3})) 
> 
d(\tilde{p}_{1}, \tilde{q}_{1}) + d(\tilde{q}_{1}, \tilde{q}_{2}) 
= d(\tilde{p}_{1}, \tilde{\tau} (s_{1})). 
\end{equation}
On the other hand, since $s_{3} < s_{1}$, it follows from Lemma \ref{lem4.1} that 
\begin{equation}\label{lem3.2-6}
d(\tilde{p}_{1}, \tilde{\tau} (s_{3})) < d(\tilde{p}_{1}, \tilde{\tau} (s_{1})).
\end{equation}
The equation (\ref{lem3.2-6}) contradicts the equation (\ref{lem3.2-5}). 
Therefore, we have proved that 
the subarc $\tilde{\tau}|_{[\tilde{y}(\tilde{q}_{2}), \,s_{1}]}$ is contained in $\cA(\tilde{p}_{1})$.\par
Since the minimal geodesic segment 
$\tilde{\gamma} : [0, d(\tilde{p}_{1}, \tilde{\tau} (s_{1}))] \rightarrow
 \wt{X}$ joining $\tilde{p}_{1}$ to 
$\tilde{\tau} (s_{1})$ lies in the closure of $\cA(\tilde{p}_{1})$, the inequality 
\[
\angle\,\tilde{p}_{1} 
\ge 
\angle 
\left( 
\tilde{\gamma}'(0), - \frac{d\tilde{\mu}_{1}^{(1)}}{dt}(d(\partial \wt{X}, \tilde{p}_{1}))
\right)
\]
holds. 
Hence, it is clear that the open triangle 
${\rm OT}(\partial \wt{X}, \tilde{p}, \tilde{q}) := (\partial \wt{X}, \tilde{p}, \tilde{q}\,;\,\partial \wt{X}, \tilde{p}_{1}, \tilde{\tau}(s_{1}))$ 
satisfies (\ref{lem3.2-length2}) and (\ref{lem3.2-angle2}) in our lemma.
$\qedd$
\end{proof}

\bigskip

Hereafter, let $(X, \partial X)$ be a complete connected Riemannian 
$n$-manifold $X$ with smooth {\bf convex} boundary $\partial X$ 
whose radial curvature is bounded from below by that of $(\wt{X}, \partial \wt{X})$.\par 
Let $\lambda_{0}$ denote the positive number guaranteed in Lemma \ref{lem2.5}. 
Choose any number $\lambda \in (0, \lambda_{0})$ and fix it. 
In the following, for the $\lambda$, 
we also denote by $(\wt{X}_{\lambda}, \partial \wt{X}_{\lambda})$ 
a model surface with its metric 
\[
\tilde{g}_{\lambda} = d\tilde{x}^2 + m_{\lambda}(\tilde{x})^{2} d\tilde{y}^2
\] 
on $[0, \infty) \times \R$. 
Here the positive smooth function $m_{\lambda}$ satisfies the differential equation 
\[
m_{\lambda}'' + (G - \lambda) m_{\lambda} = 0
\] 
with initial conditions $m_{\lambda}(0) = 1$ and $m_{\lambda}'(0) = 0$, 
where $G$ denotes the radial curvature function of $(\wt{X}, \partial \wt{X})$. 
Then, the next lemma is clear from Lemmas \ref{lem2.6} and \ref{lem3.1}:

\begin{lemma}\label{lem3.4}
Let $p$ be a point in $X \setminus (\partial X \cup \Focal (\partial X))$, 
and $\delta_{1}(p)$ the number $\delta_{1}$ guaranteed in Lemma $\ref{lem2.6}$ to the point $r :=p$. 
Then, for any $q \in X$ with $d (p, q) < \delta_{1}(p)$, there exists an open triangle 
${\rm OT}(\partial \wt{X}_{\lambda}, \tilde{p}, \tilde{q})$ in $\wt{X}_{\lambda}$ 
corresponding to the triangle ${\rm OT}(\partial X, p, q)$ in $X$ such that
\begin{equation}\label{lem3.4-length}
d(\partial \wt{X}_{\lambda},\tilde{p}) = d(\partial X, p), \quad 
d(\tilde{p},\tilde{q}) = d(p, q), \quad 
d(\partial \wt{X}_{\lambda},\tilde{q}) = d(\partial X, q)
\end{equation}
and that
\begin{equation}\label{lem3.4-angle}
\angle\,p \ge \angle\,\tilde{p}, \quad  
\angle\,q \ge \angle\,\tilde{q}.
\end{equation}
\end{lemma}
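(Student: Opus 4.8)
The plan is to deduce this from the Key Lemma (Lemma~\ref{lem2.6}), applied once with base point $p$ and once with base point $q$, converting the two resulting length inequalities into the two angle inequalities by means of Lemma~\ref{lem3.1}. To set up, I would fix a $\partial X$-segment $\mu_1:[0,\ell_1]\lra X$ to $p$ with $\ell_1=d(\partial X,p)$, a minimal geodesic segment $\gamma:[0,s_0]\lra X$ from $p$ to $q$ with $s_0=d(p,q)$, and a $\partial X$-segment $\mu_2:[0,\ell_2]\lra X$ to $q$ with $\ell_2=d(\partial X,q)$; these form the triangle $(\partial X,p,q)=(\gamma,\mu_1,\mu_2)$, with $\angle\,p=\pi-\theta_0$ and $\angle\,q=\pi-\phi_0$, where $\theta_0:=\angle(\mu_1'(\ell_1),\gamma'(0))$ and $\phi_0:=\angle(\mu_2'(\ell_2),-\gamma'(s_0))$. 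Two preliminary observations are used throughout. First, from (\ref{2.2}) with $r=p$ we get $\ell_1\ge2\ve_0>2\delta_1(p)>2s_0$, so the triangle inequality in $X$ yields $0<\ell_1-s_0\le\ell_2\le\ell_1+s_0$ and $s_0<\ell_2$, while $s_0<\ve_1$ and (shrinking $\ve_1$ if need be, the model points $\tilde{p},\tilde{q}$ lying at the fixed $\partial\wt{X}_\lambda$-distances $\ell_1,\ell_2$) $\ve_1<\min\{\inj(\tilde{p}),\inj(\tilde{q})\}$; hence $s_0<\min\{\inj(\tilde{p}),\ell_1\}$ and $s_0<\min\{\inj(\tilde{q}),\ell_2\}$, which are the hypotheses of Lemma~\ref{lem3.1}. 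Second, in $\wt{X}_\lambda$ the distance to $\partial\wt{X}_\lambda$ equals the $\tilde{x}$-coordinate, so the perpendicular geodesics from $\partial\wt{X}_\lambda$ are exactly the globally minimizing meridians; thus, in the notation of Lemma~\ref{lem2.6}, $\wt{L}_\lambda(s)=\tilde{x}(\tilde{c}_\lambda(s))=d(\partial\wt{X}_\lambda,\tilde{c}_\lambda(s))$, whereas $L(s)\ge d(\partial X,c(s))$ since $L(s)$ is the length of a curve from $\partial X$ to $c(s)$.

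For the corner at $p$, I would apply Lemma~\ref{lem2.6} with $r=p$, $\mu=\mu_1$, $\theta=\theta_0$, and the unit-speed geodesic $\xi$ on $\Sph^{n-1}_{p}$ with $\xi(0)=\mu_1'(\ell_1)$, $\xi(\theta_0)=\gamma'(0)$; then $c(s)=\exp_p(s\,\xi(\theta_0))$ agrees with $\gamma$ on $[0,s_0]$, so $c(s_0)=q$, and since $s_0<\delta_1(p)$ the lemma gives $d(\partial X,q)\le L(s_0)\le\wt{L}_\lambda(s_0)=\tilde{x}(\tilde{c}_\lambda(s_0))$. In $\wt{X}_\lambda$, with $\tilde{p}:=\tilde{\mu}_\lambda(\ell_1)$ and $\tilde{\xi}$ the unit-speed geodesic on $\Sph^1_{\tilde{p}}$ with $\tilde{\xi}(0)=-\tilde{\mu}_\lambda'(\ell_1)$, Lemma~\ref{lem3.1} tells us that $\theta\mapsto\tilde{x}(\exp_{\tilde{p}}(s_0\,\tilde{\xi}(\theta)))$ is strictly increasing on $[0,\pi]$, from $\ell_1-s_0$ to $\ell_1+s_0$; since $\ell_2$ lies in that interval there is a unique $\theta^{\ast}\in[0,\pi]$ with $\tilde{x}(\exp_{\tilde{p}}(s_0\,\tilde{\xi}(\theta^{\ast})))=\ell_2$. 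I would take $\tilde{q}:=\exp_{\tilde{p}}(s_0\,\tilde{\xi}(\theta^{\ast}))$, $\tilde{\gamma}$ the minimal geodesic $\tilde{p}\to\tilde{q}$, and $\tilde{\mu}_1,\tilde{\mu}_2$ the meridians to $\tilde{p},\tilde{q}$; this is an open triangle realizing (\ref{lem3.4-length}). Now $\tilde{c}_\lambda'(0)$ makes angle $\theta_0$ with $\tilde{\mu}_\lambda'(\ell_1)$, i.e.\ angle $\pi-\theta_0$ with $\tilde{\xi}(0)$, and since $\tilde{x}\circ\exp_{\tilde{p}}(s_0\,\tilde{\xi}(\,\cdot\,))$ is even under the reflection of $\wt{X}_\lambda$ fixing the meridian through $\tilde{p}$, we get $\tilde{x}(\tilde{c}_\lambda(s_0))=\tilde{x}(\exp_{\tilde{p}}(s_0\,\tilde{\xi}(\pi-\theta_0)))$; therefore $\ell_2\le\tilde{x}(\exp_{\tilde{p}}(s_0\,\tilde{\xi}(\pi-\theta_0)))$, so $\theta^{\ast}\le\pi-\theta_0$ by monotonicity, and as $\angle\,\tilde{p}=\angle(\tilde{\xi}(\theta^{\ast}),\tilde{\xi}(0))=\theta^{\ast}$ this gives $\angle\,\tilde{p}\le\angle\,p$.

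For the corner at $q$, the point is that $q\in B_{\delta_1(p)}(p)\subset B_{\ve_0}(p)$, so Lemma~\ref{lem2.6} --- being uniform over all base points in $B_{\ve_0}(r)$ with the single number $\delta_1(p)$ --- applies verbatim with $q$ in the role of its base point, $\mu_2$ in the role of $\mu$, $\theta=\phi_0$, and the geodesic $\eta$ on $\Sph^{n-1}_{q}$ with $\eta(0)=\mu_2'(\ell_2)$, $\eta(\phi_0)=-\gamma'(s_0)$. The corresponding model curve $\tilde{c}^{(2)}_\lambda$ emanates from the above $\tilde{q}$ and satisfies $\ell_1=d(\partial X,p)\le L^{(2)}(s_0)\le\wt{L}^{(2)}_\lambda(s_0)=\tilde{x}(\tilde{c}^{(2)}_\lambda(s_0))$. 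I would then re-run the monotonicity argument at $\tilde{q}$: with $\tilde{\eta}$ the unit-speed geodesic on $\Sph^1_{\tilde{q}}$ with $\tilde{\eta}(0)=-\tilde{\mu}_2'(\ell_2)$, the minimal geodesic $\tilde{q}\to\tilde{p}$ has initial velocity $\tilde{\eta}(\psi)$ for the unique $\psi\in[0,\pi]$ with $\tilde{x}(\exp_{\tilde{q}}(s_0\,\tilde{\eta}(\psi)))=\ell_1$ (Lemma~\ref{lem3.1}), while $\tilde{x}(\tilde{c}^{(2)}_\lambda(s_0))=\tilde{x}(\exp_{\tilde{q}}(s_0\,\tilde{\eta}(\pi-\phi_0)))$ by the same reflection argument; hence $\ell_1\le\tilde{x}(\exp_{\tilde{q}}(s_0\,\tilde{\eta}(\pi-\phi_0)))$ forces $\psi\le\pi-\phi_0$, i.e.\ $\angle\,\tilde{q}=\psi\le\angle\,q$. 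Together with the previous paragraph this yields (\ref{lem3.4-angle}) for $(\partial\wt{X}_\lambda,\tilde{p},\tilde{q})=(\tilde{\gamma},\tilde{\mu}_1,\tilde{\mu}_2)$.

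I expect the main obstacle to lie in the second paragraph: one must notice that the model point $\tilde{c}_\lambda(s_0)$ delivered by the Key Lemma is in general \emph{too far} from $\partial\wt{X}_\lambda$ to serve directly as $\tilde{q}$ --- its $\tilde{x}$-coordinate is $\wt{L}_\lambda(s_0)\ge L(s_0)\ge d(\partial X,q)$, not necessarily $=d(\partial X,q)$ --- and therefore one must slide it back along the geodesic circle of radius $s_0$ about $\tilde{p}$ to the unique point of $\partial\wt{X}_\lambda$-distance exactly $d(\partial X,q)$, which by Lemma~\ref{lem3.1} can only decrease the angle at $\tilde{p}$; keeping this ``slide'' consistent with the prescribed side lengths, and doing the symmetric thing at $q$ (here the small but essential observation is that the uniformity of $\delta_1$ over $B_{\ve_0}(r)$ lets the $q$-corner be handled with $r$ still equal to $p$), is the crux of the argument. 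The remaining verifications --- that $c|_{[0,s_0]}=\gamma$, that the identifications $\angle\,p=\pi-\theta_0$ and $\angle\,\tilde{p}=\theta^{\ast}$ hold, and that the reflection symmetry legitimizes evaluating $\tilde{x}$ at $\tilde{c}_\lambda(s_0)$ --- are routine.
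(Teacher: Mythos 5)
Your argument is correct and fills in precisely the details the paper leaves implicit: the paper states that Lemma~\ref{lem3.4} "is clear from Lemmas~\ref{lem2.6} and~\ref{lem3.1}," and you apply the Key Lemma~\ref{lem2.6} (once at $p$, once at $q$ using the uniformity of $\delta_1$ over $B_{\ve_0}(r)$) to get the length inequalities $L(s_0)\le\wt{L}_\lambda(s_0)$, then convert them into the angle inequalities via the monotonicity in Lemma~\ref{lem3.1}. Your care with the injectivity-radius requirement $s_0<\inj(\tilde p)$ (handled by taking $\ve_1$, and hence $\delta_1$, small enough) is exactly the right reading of the setup in Lemma~\ref{lem2.6}.
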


\bigskip

By Lemmas \ref{lem3.2} and \ref{lem3.4}, we have the following lemma. 

\begin{lemma}\label{lem3.5}
For every thin open triangle ${\rm OT}(\partial X, p, q)$ in $X$ with 
$\gamma \cap \Focal (\partial X) = \emptyset$, 
there exists an open triangle 
${\rm OT}(\partial \wt{X}_{\lambda}, \tilde{p}, \tilde{q})$ in $\wt{X}_{\lambda}$ such that
\begin{equation}\label{lem3.5-length}
d(\partial \wt{X}_{\lambda},\tilde{p}) = d(\partial X, p), \quad 
d(\tilde{p},\tilde{q}) = d(p, q), \quad 
d(\partial \wt{X}_{\lambda},\tilde{q}) = d(\partial X, q)
\end{equation}
and that
\begin{equation}\label{lem3.5-angle}
\angle\,p \ge \angle\,\tilde{p}, \quad  
\angle\,q \ge \angle\,\tilde{q}.
\end{equation}
Here $\gamma$ denotes the opposite side of ${\rm OT}(\partial X, p, q)$ 
to $\partial X$ emanating from $p$ to $q$.
\end{lemma}

\begin{proof}
It is sufficient to prove that 
\begin{equation}\label{lem3.5-1}
\max S = d(p, q),
\end{equation}
where 
$S$ denotes 
the set of all $s \in [0, d(p, q)]$ such that there exists an open triangle 
${\rm OT}(\partial \wt{X}_{\lambda}, \tilde{p}, \tilde{\gamma}(s)) \subset \wt{X}_{\lambda}$ 
corresponding to the triangle ${\rm OT}(\partial X, p, \gamma(s)) \subset X$ 
satisfying (\ref{lem3.5-length}) and (\ref{lem3.5-angle}) for $q = \gamma(s)$. 
From Lemma \ref{lem3.4}, it is clear that $S$ is non-empty. 
Supposing that $s_{0} := \max S < d(p, q)$, we will get a contradiction. 
Since $s_{0} \in S$, there exists an open triangle 
${\rm OT}(\partial \wt{X}_{\lambda}, \tilde{p}_{1}, \tilde{q}_{1}) \subset \wt{X}_{\lambda}$ 
corresponding to ${\rm OT}(\partial X, p, \gamma(s_{0})) \subset X$ 
such that (\ref{lem3.5-length}) and (\ref{lem3.5-angle}) 
hold for $q = \gamma(s_{0})$, $\tilde{p} = \tilde{p}_{1}$, and $\tilde{q} = \tilde{q}_{1}$. 
In particular, 
\begin{equation}\label{lem3.5-2}
\angle\,p \ge \angle\,\tilde{p}_{1}, \quad \angle\,(\partial X, \gamma (s_{0}), p) \ge \angle\, \tilde{q}_{1},
\end{equation}
where $\angle(\partial X, \gamma (s_{0}), p) $ denotes 
the angle between two sides joining $\gamma (s_{0})$ to $\partial X$ and 
$p$ forming the triangle ${\rm OT}(\partial X, p, \gamma(s_{0}))$. 
Let $\delta(\gamma (s_{0}))$ denote 
the number $\delta_{1}$ guaranteed in Lemma \ref{lem2.6} 
to the point $r :=\gamma (s_{0})$. Choose a sufficiently small number $\ve_{1}$ with 
\[
0 < \ve_{1} < \min \left\{ \delta (\gamma (s_{0})), d(p, q) - s_{0} \right\},
\]
and fix it.
By Lemma \ref{lem3.4}, we have an open triangle 
${\rm OT}(\partial \wt{X}_{\lambda}, \tilde{p}_{2}, \tilde{q}_{2}) \subset \wt{X}_{\lambda}$ 
corresponding to ${\rm OT}(\partial X, \gamma(s_{0}), \gamma(s_{0} + \ve_{1})) \subset X$ 
such that (\ref{lem3.5-length}) and (\ref{lem3.5-angle}) hold 
for $p = \gamma(s_{0})$, $q = \gamma(s_{0} + \ve_{1})$, $\tilde{p} = \tilde{p}_{2}$, 
and $\tilde{q} = \tilde{q}_{2}$. 
In particular, 
\begin{equation}\label{lem3.5-4}
\angle\,(\partial X, \gamma (s_{0}), \gamma (s_{0} + \ve_{1})) \ge \angle\,\tilde{p}_{2}, \quad 
\angle\,\gamma (s_{0} + \ve_{1}) \ge \angle\, \tilde{q}_{2}.
\end{equation}
Since 
\[
\angle\,(\partial X, \gamma (s_{0}), p) 
+ \angle\,(\partial X, \gamma (s_{0}), \gamma (s_{0} + \ve_{1})) = \pi,
\]
we get, by (\ref{lem3.5-2}) and (\ref{lem3.5-4}), 
\[
\angle\,\tilde{q}_{1} + \angle\,\tilde{p}_{2} \le \pi.
\]
Since ${\rm OT}(\partial X, p, q)$ is a thin open triangle, 
\[
\min \{ \inj (\tilde{p}_{1}), \inj (\tilde{q}_{2}) \} 
\ge 
L(\gamma) 
\ge 
d(\tilde{p}_{1}, \tilde{q}_{1}) + d(\tilde{p}_{2}, \tilde{q}_{2})
\]
holds. 
Thus, if we apply Lemma \ref{lem3.2} twice for the pair 
${\rm OT}(\partial \wt{X}_{\lambda}, \tilde{p}_{1}, \tilde{q}_{1})$ 
and ${\rm OT}(\partial \wt{X}_{\lambda}, \tilde{p}_{2}, \tilde{q}_{2})$, 
we get two open triangles 
${\rm OT}(\partial \wt{X}_{\lambda}, \wh{p}, \wh{\gamma}(s_{0} + \ve_{1}))$ and 
${\rm OT}(\partial \wt{X}_{\lambda}, \tilde{p}, \tilde{\gamma}(s_{0} + \ve_{1}))$ in $\wt{X}_{\lambda}$ 
such that 
\begin{equation}\label{lem3.5-5}
\angle\,\tilde{p}_{1} \ge \angle\,\wh{p}, \quad 
\angle\,\tilde{q}_{2} \ge \angle\, \tilde{\gamma}(s_{0} + \ve_{1}).
\end{equation}
Since both triangles ${\rm OT}(\partial \wt{X}_{\lambda}, \wh{p}, \wh{\gamma}(s_{0} + \ve_{1}))$ 
and ${\rm OT}(\partial \wt{X}_{\lambda}, \tilde{p}, \tilde{\gamma}(s_{0} + \ve_{1}))$ are isometric, 
we obtain 
\begin{equation}\label{lem3.5-6}
\angle\,\wh{p} = \angle\,\tilde{p}.
\end{equation}
Hence, by (\ref{lem3.5-2}), (\ref{lem3.5-4}), (\ref{lem3.5-5}), and (\ref{lem3.5-6}), 
both open triangles ${\rm OT}(\partial X, p, \gamma (s_{0} + \ve_{1}))$ 
and ${\rm OT}(\partial \wt{X}_{\lambda}, \tilde{p}, \tilde{\gamma}(s_{0} + \ve_{1}))$ satisfy 
(\ref{lem3.5-length}) for $q = \gamma(s_{0} + \ve_{1})$ and 
$\tilde{q} = \tilde{\gamma}(s_{0} + \ve_{1})$ and 
\[
\angle\,p \ge \angle\,\tilde{p}, \quad 
\angle\,\gamma (s_{0} + \ve_{1}) \ge \angle\,\tilde{\gamma}(s_{0} + \ve_{1}).
\] 
This implies that $s_{0} + \ve_{1}$ is in $S$. 
This contradicts the fact that $s_{0}$ is the maximum of $S$. 
Hence (\ref{lem3.5-1}) holds.
$\qedd$
\end{proof}

\begin{lemma}\label{lem3.6}
For every thin open triangle ${\rm OT}(\partial X, p, q)$ in $X$ with 
\begin{equation}\label{lem3.6-assume}
p \not\in \Focal(\partial X),
\end{equation}
there exists an open triangle 
${\rm OT}(\partial \wt{X}_{\lambda}, \tilde{p}, \tilde{q})$ in $\wt{X}_{\lambda}$ such that
\begin{equation}\label{lem3.6-length}
d(\partial \wt{X}_{\lambda},\tilde{p}) = d(\partial X, p), \quad 
d(\tilde{p},\tilde{q}) = d(p, q), \quad 
d(\partial \wt{X}_{\lambda},\tilde{q}) = d(\partial X, q)
\end{equation}
and that
\begin{equation}\label{lem3.6-angle}
\angle\,p \ge \angle\,\tilde{p}, \quad  
\angle\,q \ge \angle\,\tilde{q}.
\end{equation}
\end{lemma}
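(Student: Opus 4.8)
The plan is to derive Lemma~\ref{lem3.6} from Lemma~\ref{lem3.5} by an approximation that pushes the opposite side off the focal cut locus, using Lemma~\ref{lem2.2}. Since $p \notin \Focal(\partial X)$ and, by condition (TOT--1) of Definition~\ref{def3.3}, the opposite side $\gamma$ is a minimal geodesic segment lying in a normal convex neighborhood inside $X \setminus \partial X$ (whence $q \notin \Cut(p)$ and $\gamma \cap \partial X = \emptyset$), Lemma~\ref{lem2.2} applies. Choosing there the direction $v := -\,\mu_2'(\ell_2) \in \Sph_q^{n-1}$, so that the approximating endpoints tend to $q$ along $\mu_2$, we obtain minimal geodesic segments $\gamma_i : [0, \ell_i] \lra X$ emanating from $p = \gamma_i(0)$, converging to $\gamma$, with endpoints $q_i := \gamma_i(\ell_i) \to q$ and $\gamma_i([0, \ell_i]) \cap \FC(\partial X) = \emptyset$. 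For $i$ large, $(\partial X, p, q_i) = (\gamma_i, \mu_1, \mu_2^{(i)})$, where $\mu_2^{(i)}$ denotes a $\partial X$-segment to $q_i$, is again a thin open triangle, because thinness is an open condition while $L(\gamma_i) \to L(\gamma)$ and $\inj$ in $\wt X$ is continuous and positive.

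The crux is to produce, for each such $i$, a comparison triangle $(\partial \wt X_{\lambda}, \tilde p^{(i)}, \tilde q^{(i)})$ in $\wt X_{\lambda}$ for $(\partial X, p, q_i)$ satisfying (\ref{lem3.5-length}) and (\ref{lem3.5-angle}). Lemma~\ref{lem3.5} cannot be quoted verbatim, since $\gamma_i$ avoids only $\FC(\partial X)$ and may still meet $\Focal(\partial X) \setminus \Cut(\partial X)$; instead I re-run its proof. Let $S_i$ be the set of $s \in [0, \ell_i]$ for which $(\partial X, p, \gamma_i(s))$ admits such a comparison triangle. Then $[0, \delta_1(p)) \subset S_i$ by Lemma~\ref{lem3.4} (this is the only place $p \notin \Focal(\partial X)$ is used), $S_i$ is closed because the comparison data depend continuously on $s$ and, by Lemma~\ref{lem4.1}, a limit of comparison triangles is again one, and it remains to exclude $s_0 := \max S_i < \ell_i$. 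Put $r := \gamma_i(s_0)$. If $r \notin \Focal(\partial X)$, Lemmas~\ref{lem3.4} and~\ref{lem3.2} extend the comparison past $s_0$ exactly as in Lemma~\ref{lem3.5}. If $r \in \Focal(\partial X)$, then $r \notin \FC(\partial X)$ forces $r \notin \Cut(\partial X)$, so $d(\partial X, \cdot)$ is smooth on a neighborhood $V$ of $r$; hence the unique $\partial X$-segment to each point of $V$ is non-focal and $\exp^{\bot}$ restricts to a diffeomorphism onto $V$ along those normal directions, which is all the constructions and estimates of Lemmas~\ref{lem2.3}--\ref{lem2.6} actually use. They therefore go through at $r$ with constants depending on $r$ (uniformity over a ball being irrelevant for one application), giving a $\delta_1(r) > 0$ for which the conclusion of Lemma~\ref{lem3.4} holds at $r$, and Lemma~\ref{lem3.2} again extends past $s_0$. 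In either case $s_0$ is not maximal in $S_i$, a contradiction, so $S_i = [0, \ell_i]$; taking $s = \ell_i$ gives the comparison triangle for $(\partial X, p, q_i)$.

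Finally let $i \to \infty$. Normalize so that $\tilde p^{(i)} = (d(\partial X, p), 0) =: \tilde p$ for all $i$ and $\tilde y(\tilde q^{(i)}) > 0$. Since $d(\partial \wt X_{\lambda}, \tilde q^{(i)}) = d(\partial X, q_i) \to d(\partial X, q)$ and $d(\tilde p, \tilde q^{(i)}) = d(p, q_i) \to d(p, q)$, Lemma~\ref{lem4.1} shows that $\tilde q^{(i)}$ converges to the unique point $\tilde q$ with $\tilde x(\tilde q) = d(\partial X, q)$, $d(\tilde p, \tilde q) = d(p, q)$ and $\tilde y(\tilde q) > 0$, so $(\partial \wt X_{\lambda}, \tilde p, \tilde q)$ satisfies (\ref{lem3.6-length}). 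As for the angles, $\gamma_i \to \gamma$ forces $\gamma_i'(0) \to \gamma'(0)$ and $\gamma_i'(\ell_i) \to \gamma'(d(p,q))$, and the choice $v = -\,\mu_2'(\ell_2)$ forces $\mu_2^{(i)} \to \mu_2$, so the angles of $(\partial X, p, q_i)$ at $p$ and at $q_i$ tend to $\angle\,p$ and $\angle\,q$; on the model side the angles of $(\partial \wt X_{\lambda}, \tilde p^{(i)}, \tilde q^{(i)})$ at $\tilde p^{(i)}$ and $\tilde q^{(i)}$ tend to those of $(\partial \wt X_{\lambda}, \tilde p, \tilde q)$ by continuity of geodesics in $\wt X_{\lambda}$. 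Passing to the limit in the inequalities (\ref{lem3.5-angle}) for the $i$-th triangles yields (\ref{lem3.6-angle}).

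The main obstacle is the extension step at a point $r = \gamma_i(s_0) \in \Focal(\partial X) \setminus \Cut(\partial X)$: one must check with care that the mechanism of Lemmas~\ref{lem2.3}--\ref{lem2.6} requires only that $\exp^{\bot}$ be a diffeomorphism onto a neighborhood of $r$ along the minimizing normal directions --- equivalently, only $r \notin \Cut(\partial X) \cup \partial X$ --- rather than the stronger hypothesis $r \notin \Focal(\partial X) \cup \partial X$ under which those lemmas were stated. A secondary technical point is the convergence $\mu_2^{(i)} \to \mu_2$ of the $\partial X$-segments (needing a small argument when $q \in \Cut(\partial X)$), which is exactly why the direction $v$ in Lemma~\ref{lem2.2} is taken along $\mu_2$.
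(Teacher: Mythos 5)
Your proposal is correct and arrives at the lemma by a route close to the paper's, but it diverges at two points worth recording. (i) The paper feeds $v = -\gamma'(\ell)$ into Lemma~\ref{lem2.2} (so the endpoints $q_i = \gamma_i(\ell_i)$ tend to $q$ along the extension of $\gamma$) and then invokes \cite[Lemma 2.1]{IT3} for $\angle\,q \ge \limsup_i \angle(\eta_i'(\cdot),\gamma_i'(\ell_i))$, sidestepping any claim about which $\partial X$-segment the $\eta_i$ converge to. You instead take $v = -\mu_2'(\ell_2)$, so that $d(\partial X,q_i) = d(\partial X,q) - d(q,q_i) + o(d(q,q_i))$ by first variation; the hinge at $q_i$ between $\mu_2^{(i)}$ and the short geodesic from $q_i$ to $q$ is then nearly degenerate, which forces $\mu_2^{(i)} \to \mu_2$ and gives convergence of both angles directly. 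This works, but the degenerate-hinge estimate is a real step that you only gesture at, whereas the paper outsources the angle semicontinuity to \cite{IT3}. (ii) You correctly observe that Lemma~\ref{lem2.2} produces only $\gamma_i \cap \FC(\partial X) = \emptyset$, not $\gamma_i \cap \Focal(\partial X) = \emptyset$ as Lemma~\ref{lem3.5} literally demands; the paper applies Lemma~\ref{lem3.5} here without comment on the mismatch. Your resolution --- that the constructions of Lemmas~\ref{lem2.3}--\ref{lem2.6} require only that the base point $r$ be non-focal \emph{along its own $\partial X$-segment}, which holds for every $r \notin \FC(\partial X)$ since $r \notin \FC(\partial X)$ means $r \notin \Focal(\partial X)$ or $r \notin \Cut(\partial X)$ and either suffices --- is exactly right, and it shows that Lemma~\ref{lem3.5} in fact holds, by its own proof, under the weaker hypothesis $\gamma \cap \FC(\partial X) = \emptyset$. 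You could therefore cite that strengthened form directly rather than re-running the induction; this is evidently what the authors intended.
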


\begin{proof}
Let ${\rm OT}(\partial X, p, q) := (\partial X, p, q\,;\,\gamma, \mu_{1}, \mu_{2})$ 
be a thin open triangle in $X$ satisfying (\ref{lem3.6-assume}), and we fix it. 
Since $p$ is not a focal point of $\partial X$, and $q$ is not a cut point of $p$, 
it follows from Lemma \ref{lem2.2} that 
there exists a sequence 
$
\left\{
\gamma_{i} : [0, l
_{i}] \rightarrow
 X
\right\}_{i \in \N}
$ 
of minimal geodesic segments $\gamma_{i}$ emanating from $p = \gamma_{i}(0)$ 
convergent to the opposite side $\gamma$ of ${\rm OT}(\partial X, p, q)$ to $\partial X$ such that 
\[
\gamma_{i}([0, l
_{i}]) \cap \FC(\partial X) = \emptyset,
\]
and that 
\[
\lim_{i \to \infty} \frac{1}{\| \exp_{q}^{-1}(\gamma_{i}(l
_{i})) \|} \exp_{q}^{-1}(\gamma_{i}(l
_{i})) 
= - \gamma' (l
),
\]
where $l:= d(p, q)$. 
Then, we may find a sufficiently large $i_{0} \in \N$ 
such that 
an open triangle ${\rm OT}(\partial X, p, \gamma_{i}(l_{i}))
=(\partial X, p, \gamma_{i}(l_{i})\,;\,\gamma_{i}, \mu_{1}, \eta_{i})$ 
is thin in $X$ for each $i \ge i_{0}$. 
Here each $\eta_{i}$ is a $\partial X$-segment to $\gamma_{i}(l
_{i})$.
Choose any $i \ge i_{0}$ and fix it. 
By Lemma \ref{lem3.5}, 
there exists an open triangle 
$
{\rm OT}
(\partial \wt{X}_{\lambda}, \tilde{p}, \tilde{\gamma}_{i}(l_{i})) 
= (\partial \wt{X}_{\lambda}, \tilde{p}, \tilde{\gamma}_{i}(l_{i})\,;\,
\tilde{\gamma}_{i}, \tilde{\mu}_{1}, \tilde{\eta}_{i}) \subset \wt{X}_{\lambda}
$ 
corresponding to ${\rm OT}(\partial X, p, \gamma_{i}(l_{i}))$ such that (\ref{lem3.5-length}) 
hold for $q = \gamma_{i}(l_{i})$, and 
\begin{equation}\label{lem3.6-1}
\angle(
- \mu'_{1}(d(\partial X, p)), \gamma'_{i}(0)
) 
\ge 
\angle(
- \tilde{\mu}'_{1}(d(\partial X, p)), \tilde{\gamma}'_{i}(0)
), 
\end{equation}
\begin{equation}\label{lem3.6-2}
\angle(
\eta'_{i}(d(\partial X, \gamma_{i}(l_{i}))), \gamma'_{i}(l_{i})
) 
\ge 
\angle(
\tilde{\eta}'_{i}(d(\partial X, \gamma_{i}(l_{i}))), \tilde{\gamma}'_{i}(l_{i})
).
\end{equation}
Since $\lim_{i \to \infty} \gamma'_{i}(0) = \gamma'(0)$, 
\begin{equation}\label{lem3.6-3}
\angle\,p 
= 
\lim_{i \to \infty}
\angle(
- \mu'_{1}(d(\partial X, p)), \gamma'_{i}(0)
).
\end{equation}
On the other hand, 
\begin{equation}\label{lem3.6-4}
\angle\,q \ge 
\limsup_{i \to \infty}\angle(
\eta'_{i}(d(\partial X, \gamma_{i}(l
_{i}))), \gamma'_{i}(l
_{i})
)
\end{equation}
holds by \cite[Lemma 2.1]{IT3}. 
Then, from (\ref{lem3.6-1}), (\ref{lem3.6-2}), (\ref{lem3.6-3}), (\ref{lem3.6-4}), it follows that 
\[
\angle\,p 
\ge 
\lim_{i \to \infty} 
\angle(
- \tilde{\mu}'_{1}(d(\partial X, p)), \tilde{\gamma}'_{i}(0)
),
\]
and that 
\[
\angle\,q 
\ge 
\lim_{i \to \infty} 
\angle(
\tilde{\eta}'_{i}(d(\partial X, \gamma_{i}(l_{i}))), \tilde{\gamma}'_{i}(l_{i})
).
\]
By taking the limit of the sequence 
${\rm OT}(\partial \wt{X}_{\lambda}, \tilde{p}, \tilde{\gamma}_{i}(l_{i}))$, 
we therefore get an open triangle 
${\rm OT}(\partial \wt{X}_{\lambda}, \tilde{p}, \tilde{q})$ 
corresponding to ${\rm OT}(\partial X, p, q) \subset X$ such that  
(\ref{lem3.6-length}) and (\ref{lem3.6-angle}) hold.
$\qedd$
\end{proof}

\begin{lemma}\label{lem3.7}
For every thin open triangle ${\rm OT}(\partial X, p, q)$ in $X$, 
there exists an open triangle ${\rm OT}(\partial \wt{X}_{\lambda}, \tilde{p}, \tilde{q})$ 
in $\wt{X}_{\lambda}$ such that
\begin{equation}\label{lem3.7-length}
d(\partial \wt{X}_{\lambda},\tilde{p}) = d(\partial X, p), \quad 
d(\tilde{p},\tilde{q}) = d(p, q), \quad 
d(\partial \wt{X}_{\lambda},\tilde{q}) = d(\partial X, q)
\end{equation}
and that
\begin{equation}\label{lem3.7-angle}
\angle\,p \ge \angle\,\tilde{p}, \quad  
\angle\,q \ge \angle\,\tilde{q}.
\end{equation}
\end{lemma}

\begin{proof}
Let ${\rm OT}(\partial X, p, q) := (\partial X, p, q\,;\,\gamma, \mu_{1}, \mu_{2})$ 
be a thin open triangle in $X$, and we fix it. 
Take any sufficiently small $\ve > 0$ such that 
$q$ is not in $\Cut (p_{\ve})$, 
where we set $p_{\ve} := \mu_{1}(d(\partial X, p) - \ve)$. 
Let $\mu_{\ve}$ denote the restriction of $\mu_{1}$, i.e., 
$\mu_{\ve}(t) := \mu_{1}(t)$ on $[0, d(\partial X, p) - \ve]$. 
Without loss of generality, 
we may assume that the open triangle 
${\rm OT}(\partial X, p_{\ve}, q) = (\partial X, p_{\ve}, q\,;\,\gamma_{\ve}, \mu_{\ve}, \mu_{2})$ is thin. 
Here $\gamma_{\ve} : [0, l
_{\ve}] \rightarrow
 X$ denotes the minimal geodesic segment 
emanating from $p_{\ve} = \gamma_{\ve}(0)$ to $q = \gamma_{\ve}(l
_{\ve})$. 
Since $p_{\ve} \not\in \Focal (\partial X)$, 
it follows from Lemma \ref{lem3.6} that 
there exists an open triangle 
$
{\rm OT}(\partial \wt{X}_{\lambda}, \tilde{p}_{\ve}, \tilde{q}) 
= (\partial \wt{X}_{\lambda}, \tilde{p}_{\ve}, \tilde{q}\,;\,\tilde{\gamma}_{\ve}, \tilde{\mu}_{\ve}, \tilde{\mu}_{2}) 
\subset \wt{X}_{\lambda}
$ 
corresponding to ${\rm OT}(\partial X, p_{\ve}, q)$ such that (\ref{lem3.6-length})
holds for $p = p_{\ve}$, and that 
\[
\angle
(
- \mu'_{\ve}(d(\partial X, p_{\ve})), \gamma'_{\ve}(0)
) 
\ge 
\angle
(
- \tilde{\mu}'_{\ve}(d(\partial X, p_{\ve})), \tilde{\gamma}'_{\ve}(0)
), 
\]
\[
\angle
(
\mu'_{2}(d(\partial X, q)), \gamma'_{\ve}(l
_{\ve})
) 
\ge 
\angle
(
\tilde{\mu}'_{2}(d(\partial X, q)), \tilde{\gamma}'_{\ve}(l
_{\ve})
)_.
\]
Since $\lim_{\ve \downarrow 0} \gamma_{\ve} = \gamma$,  
we have 
\[
\angle\,p 
=
\lim_{\ve \downarrow 0}\angle
(
- \mu'_{\ve}(d(\partial X, p_{\ve})), \gamma'_{\ve}(0)
)
\]
and 
\[
\angle\,q 
= 
\lim_{\ve \downarrow 0}
\angle
(
\mu'_{2}(d(\partial X, q)), \gamma'_{\ve}(l
_{\ve})
).
\]
If $\ve$ goes to $0$, 
we therefore get an open triangle 
${\rm OT}(\partial \wt{X}_{\lambda}, \tilde{p}, \tilde{q}) \subset \wt{X}_{\lambda}$ corresponding to 
${\rm OT}(\partial X, p, q) \subset X$ such that  (\ref{lem3.7-length}) and (\ref{lem3.7-angle}) hold.
$\qedd$
\end{proof}

\bigskip

By taking the limit of $\lambda$, it follows from Lemma \ref{lem3.7} that 
we have the lemma on thin open triangles:

\begin{lemma}{\bf (Lemma on thin open triangles)}\label{lem3.8} 
For every thin open triangle ${\rm OT}(\partial X, p, q)$ in $X$, 
there exists an open triangle 
${\rm OT}(\partial \wt{X}, \tilde{p}, \tilde{q})$ in $\wt{X}$ such that
\begin{equation}\label{lem3.8-length}
d(\partial \wt{X},\tilde{p}) = d(\partial X, p), \quad 
d(\tilde{p},\tilde{q}) = d(p, q), \quad 
d(\partial \wt{X},\tilde{q}) = d(\partial X, q)
\end{equation}
and that
\begin{equation}\label{lem3.8-angle}
\angle\,p \ge \angle\,\tilde{p}, \quad  
\angle\,q \ge \angle\,\tilde{q}.
\end{equation}
\end{lemma}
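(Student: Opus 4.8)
The plan is to obtain Lemma \ref{lem3.8} from Lemma \ref{lem3.7} by a compactness argument as $\lambda \downarrow 0$. Fix a thin open triangle $(\partial X, p, q) = (\gamma, \mu_1, \mu_2)$ and choose a sequence $\lambda_k \downarrow 0$ with $\lambda_k \in (0, \lambda_0)$. First I would record that $m_{\lambda_k} \lra m$ in $C^\infty$ on every compact subset of $\R$: indeed $m_{\lambda_k}$ and $m$ solve $y'' + (G-\lambda_k)\,y = 0$ and $y'' + G\,y = 0$ with the same initial data $y(0)=1$, $y'(0)=0$, so continuous dependence of solutions of linear ODEs on parameters applies. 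Consequently the metrics $\tilde g_{\lambda_k} = d\tilde x^2 + m_{\lambda_k}(\tilde x)^2\,d\tilde y^2$ converge to $\tilde g$ in $C^\infty$ on $[0,\infty)\times\R$, and on the fixed compact interval $[0, D]$ with $D := \max\{d(\partial X,p), d(\partial X,q)\} + d(p,q)$ the functions $m_{\lambda_k}$ are uniformly bounded above and below away from $0$.

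For each $k$, Lemma \ref{lem3.7} supplies an open triangle $(\partial \wt{X}_{\lambda_k}, \tilde{p}_k, \tilde{q}_k) = (\tilde\gamma_k, \tilde\mu_1^{(k)}, \tilde\mu_2^{(k)})$ in $\wt{X}_{\lambda_k}$ satisfying (\ref{lem3.7-length}) and (\ref{lem3.7-angle}). After composing with an isometry $(\tilde x,\tilde y)\mapsto(\tilde x,\tilde y+c)$ of $\wt{X}_{\lambda_k}$ I may assume $\tilde y(\tilde\mu_1^{(k)}(0)) = 0$. On any model surface the function $\tilde x$ is a distance function (its $\tilde g$-gradient is $\partial_{\tilde x}$, of unit length), so $d(\partial\wt X_{\lambda_k},\cdot) = \tilde x$ and the $\partial\wt X_{\lambda_k}$-segments are exactly the vertical coordinate segments; hence $\tilde x(\tilde p_k) = d(\partial X,p)$ and $\tilde x(\tilde q_k) = d(\partial X,q)$ are fixed, while $|\tilde y(\tilde q_k)|$ is bounded by the $\tilde g_{\lambda_k}$-length of $\tilde\gamma_k$, namely $d(p,q)$, divided by the uniform lower bound for $m_{\lambda_k}$ on $[0,D]$. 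Thus all of $\tilde p_k$, $\tilde q_k$ lie in a fixed compact rectangle, and the initial vectors $\tilde\gamma_k'(0)$, $\mu_i^{(k)}{}'(0)$ lie in a fixed compact set of tangent vectors.

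Passing to a subsequence, I may assume $\tilde p_k \lra \tilde p$, $\tilde q_k \lra \tilde q$, and that the relevant initial velocities converge. Since $\tilde g_{\lambda_k}\lra\tilde g$ in $C^\infty$ on compacta, the exponential maps converge, so $\tilde\mu_i^{(k)}$ converges to a $\tilde g$-geodesic $\tilde\mu_i$ emanating perpendicularly from $\partial\wt X$ of length $d(\partial X,\cdot)$ — automatically a $\partial\wt X$-segment by the distance-function remark above — and $\tilde\gamma_k$ converges to a $\tilde g$-geodesic $\tilde\gamma$ from $\tilde p$ to $\tilde q$ of length $d(p,q)$. Moreover $d_{\tilde g}(\tilde p,\tilde q) = \lim_k d_{\tilde g_{\lambda_k}}(\tilde p_k,\tilde q_k) = \lim_k d(p,q) = d(p,q)$ by continuity of distance under $C^0$-convergence of metrics on compacta, so $\tilde\gamma$ is minimal; hence $(\partial\wt X,\tilde p,\tilde q) = (\tilde\gamma,\tilde\mu_1,\tilde\mu_2)$ is a genuine open triangle in $\wt X$ realizing (\ref{lem3.8-length}). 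Finally the angles at $\tilde p$ and $\tilde q$ are angles between the limiting unit vectors measured in $\tilde g$, hence equal to $\lim_k \angle\,\tilde p_k$ and $\lim_k \angle\,\tilde q_k$ (angle being continuous in both the metric and the vectors); passing to the limit in $\angle\,p \ge \angle\,\tilde p_k$ and $\angle\,q \ge \angle\,\tilde q_k$ yields (\ref{lem3.8-angle}).

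The substantive point, and essentially the only one requiring care, is the compactness in the second paragraph: keeping the endpoints $\tilde p_k,\tilde q_k$ in a fixed compact region (i.e. controlling the $\tilde y$-coordinates) so that a subsequential limit exists. This rests on the uniform positive lower bound for the $m_{\lambda_k}$ on the relevant $\tilde x$-interval, which comes from $m_{\lambda_k}\lra m$ uniformly there. Once that is in place, the remaining steps — convergence of geodesics under $C^\infty$-convergence of the metrics, preservation of minimality via the length–distance identity, and continuity of angles — are routine.
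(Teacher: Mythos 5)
Your argument is correct and takes exactly the approach the paper intends: the published proof of Lemma \ref{lem3.8} consists of the single sentence ``By taking the limit of $\lambda$, it follows from Lemma \ref{lem3.7}\dots'', and your compactness argument supplies precisely the details of that limit (uniform control of $m_{\lambda}$ on a fixed $\tilde{x}$-interval, boundedness of the $\tilde{y}$-coordinates, convergence of the comparison triangles, and continuity of lengths and angles).
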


\begin{remark} 
From Section \ref{sec:LGV} and this section, 
it has been clarified that we can prove Lemma on thin open triangles 
by the second variation formula and the Warner comparison theorem. 
But, we can not do by the first variation formula and the Berger comparison theorem.
\end{remark}

\section{The opposite side to $\partial X$ of an open triangle}\label{sec:Rem}

In Definition \ref{def3.3}, 
the opposite side to $\partial X$ of a thin open triangle 
is defined not to meet the boundary. 
In this section, 
we will show that the opposite side to $\partial X$ of 
any open triangle on {\bf any} complete connected Riemannian manifold $X$ 
with smooth boundary $\partial X$ does not meet $\partial X$, if $\partial X$ is convex. 

\begin{lemma}\label{lem3.10}
Let $(X, \partial X)$ be a complete connected Riemannian 
$n$-dimensional manifold $X$ with smooth boundary $\partial X$ 
whose radial curvature is bounded from below by that of 
a model surface $(\wt{X}, \partial \wt{X})$. 
If $\partial X$ is convex, then, 
for any open triangle ${\rm OT}(\partial X, p, q)$ in $X$, 
the opposite side $\gamma$ of ${\rm OT}(\partial X, p, q)$ to $\partial X$ 
emanating from $p$ to $q$ does not meet $\partial X$.
\end{lemma}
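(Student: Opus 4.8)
\noindent The plan is to argue by contradiction, using that convexity of $\partial X$ makes the distance function $\rho := d(\partial X,\,\cdot\,)$ behave concavely to first order along $\partial X$. Since $\mu_1,\mu_2$ are $\partial X$-segments of positive length to $p$ and $q$, both $p$ and $q$ lie in $X\setminus\partial X$; put $L:=d(p,q)$ and $C:=\gamma^{-1}(\partial X)$, which is then a closed subset of the open interval $(0,L)$. Assume $C\neq\emptyset$ and fix $t_0\in C$. First I would note that $\gamma$ is necessarily tangent to $\partial X$ at $\gamma(t_0)$: if $\gamma'(t_0)$ had a nonzero component along the outward unit normal, then $\gamma(t)\notin X$ for $t$ slightly larger, or slightly smaller, than $t_0$, contradicting $\gamma([0,L])\subset X$.

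Next I would pass to Fermi coordinates for $\partial X$ near $\gamma(t_0)$. Choosing a relatively compact neighbourhood $V$ of $\gamma(t_0)$ in $\partial X$, the map $(x,s)\mapsto\exp^{\bot}(s\,\xi_x)$, with $\xi_x$ the inward unit normal at $x$, is for $s\in[0,\ve)$ with $\ve$ small a diffeomorphism of $V\times[0,\ve)$ onto an open set $U\ni\gamma(t_0)$ in $X$ on which $\rho$ is smooth and equals the coordinate $s$; here $\ve$ is taken small enough that points of $U$ are neither focal points nor cut points of $\partial X$, so that the foot point on $\partial X$ is unique. Since $\nabla\rho$ restricts to $\xi$ on $\partial X$, the definition of convexity gives, for $v\in T\partial X$,
\[
\mathrm{Hess}\,\rho(v,v) = -\big\langle A_{\xi}(v),v\big\rangle \le 0 ,
\]
and, expressing the shape operators of the level hypersurfaces $\rho^{-1}(s)$ via the Riccati equation and using that the ambient curvature and these second fundamental forms are bounded on the relatively compact set $\overline{U}$ (equivalently, by the Hessian comparison estimate coming from the radial curvature lower bound, where $m(0)=1$ and $m'(0)=0$), there is a constant $C_0$ with
\[
\mathrm{Hess}\,\rho(w,w) \le C_0\,\rho(x)\,\|w\|^2
\qquad\text{for all }x\in U,\ w\in T_xX .
\]

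Now consider $h:=\rho\circ\gamma$ on a small interval $I\ni t_0$ with $\gamma(I)\subset U$. It is smooth and non-negative with $h(t_0)=0$, so $t_0$ minimizes $h$ and $h'(t_0)=0$; and since $\gamma$ is a geodesic, $h''(t)=\mathrm{Hess}\,\rho(\gamma'(t),\gamma'(t))\le C_0\,h(t)$ on $I$. An elementary ODE comparison — integrate $h''\le C_0h$ twice and apply Gronwall's inequality on $[t_0,t_0+\delta]$ and on $[t_0-\delta,t_0]$ — shows that a non-negative function with $h(t_0)=h'(t_0)=0$ and $h''\le C_0h$ vanishes on a neighbourhood of $t_0$. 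Hence $\gamma^{-1}(\partial X)$ is open as well as closed in $[0,L]$; being non-empty and $[0,L]$ connected, it equals $[0,L]$, so $p=\gamma(0)\in\partial X$, a contradiction. Therefore $\gamma$ does not meet $\partial X$.

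The main obstacle is the non-strictly convex case. At a single touching point the pointwise inequality $\mathrm{Hess}\,\rho|_{T\partial X}\le 0$ only yields $h''(t_0)=0$ (it is also $\ge 0$, since $t_0$ minimizes the non-negative $h$), which by itself does not rule out $\gamma$ touching $\partial X$ at an isolated parameter value. The substance of the argument is the differential inequality $h''\le C_0h$ holding throughout a one-sided neighbourhood of $\partial X$ — which is where the Riccati/Hessian-comparison estimate and the smoothness of $\rho$ near $\gamma(t_0)$ (absence of cut and focal points of $\partial X$ arbitrarily close to $\partial X$ there) are needed — together with the ODE fact that such an $h$, vanishing to second order and remaining non-negative, must vanish identically.
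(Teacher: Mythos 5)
Your proof is correct, but it takes a genuinely different route from the paper's. The paper's argument is comparison-geometric: it subdivides $\gamma$ near the first touching point $s_0$, lifts the resulting thin sub-triangles into the model via Lemma \ref{lem3.8}, and uses the monotonicity in Lemma \ref{lem4.1} to show the angle at $\gamma(s)$ tends to $\pi/2$ as $s\uparrow s_0$, so that the model side would have to be tangent to $\partial\wt X$ --- contradicting that $\partial\wt X$ is totally geodesic. Your proof instead works entirely inside $X$: in Fermi coordinates near $\gamma(t_0)$ the distance function $\rho=d(\partial X,\cdot)$ is smooth, the Riccati/Hessian comparison with initial condition $\mathrm{Hess}\,\rho|_{T\partial X}=-\langle A_\xi\cdot,\cdot\rangle\le 0$ (convexity) gives $\mathrm{Hess}\,\rho\le C_0\rho$ near $\partial X$ (this really holds for all $w$, since $\mathrm{Hess}\,\rho(\nabla\rho,\cdot)\equiv 0$ for a distance function), and then the Sturm/Wronskian or Gronwall step shows that $h=\rho\circ\gamma\ge 0$ with $h(t_0)=h'(t_0)=0$ and $h''\le C_0h$ must vanish on a neighbourhood of $t_0$, so $\gamma^{-1}(\partial X)$ is open as well as closed and hence equal to $[0,L]$, a contradiction. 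What each approach buys: the paper's argument is embedded in the Toponogov machinery it has already built and needs no separate analytic estimate, while yours is more elementary and self-contained, and makes transparent the point that no radial-curvature hypothesis is actually needed here (compactness alone already furnishes the Hessian bound) --- a fact the paper only records afterwards as Proposition \ref{prop2009-01-13} via the artificial model of Lemma \ref{lem2009-01-13}. You also correctly identify the crux: the pointwise tangency only gives $h''(t_0)=0$; it is the differential inequality on a one-sided neighbourhood, not the single inequality at $t_0$, that forces $h\equiv 0$.
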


\begin{proof}
Suppose that $\gamma$ intersects $\partial X$ at $\gamma (s_{0})$ 
for some $s_{0} \in (0, d(p, q))$. 
Without loss of generality, 
we may assume that 
\[
\gamma ((0, s_{0})) \cap \partial X = \emptyset.
\]
Since $\gamma$ intersects $\partial X$ at $\gamma (s_{0})$, 
$\gamma$ is tangent to $\partial X$ at $\gamma (s_{0})$.\par 
It is well-known that each point of $\wt{X}$ admits a normal convex neighborhood. 
Hence, there exists a constant $C_{0} > 0$ such that 
$\inj(\tilde{q}_{s}) > C_{0}$ for all $s \in [0, s_{0}]$, 
where $\tilde{q}_{s}$ denotes a point in $\wt{X}$ satisfying 
\[
d(\partial \wt{X}, \tilde{q}_{s}) = d(\partial X, \gamma (s)).
\] 
By this property, we may choose a number $s_{1} \in (0, s_{0})$ in such a way that 
\[
L(\gamma|_{[s_{1},\,s_{0}]}) = s_{0} - s_{1} < \inj(\tilde{q}_{s})
\]
for all $s \in [s_{1}, s_{0}]$. 
Therefore, for each $s_{2} \in [s_{1}, s_{0})$, 
any open triangle ${\rm OT}(\partial X, \gamma(s_{2}), \gamma(s_{3}))$ in $X$ is thin, 
if $s_{3} \in [s_{1}, s_{0}) \setminus \{ s_{2} \}$ is sufficiently close to $s_{2}$.\par
Let $S$ denote the set of all $s \in (s_{1}, s_{0})$ such that 
there exists an open triangle 
${\rm OT}(\partial \wt{X}, \tilde{\gamma}(s_{1}), \tilde{\gamma}(s)) \subset \wt{X}$ 
corresponding to the triangle ${\rm OT}(\partial X, \gamma(s_{1}), \gamma(s)) \subset X$ 
satisfying 
\begin{equation}\label{lem3.10-1}
d(\partial \wt{X},\tilde{\gamma}(s_{1})) = d(\partial X, \gamma(s_{1})), \ 
d(\tilde{\gamma}(s_{1}), \tilde{\gamma}(s)) = s - s_{1}, \ 
d(\partial \wt{X},\tilde{\gamma}(s)) = d(\partial X, \gamma(s)),
\end{equation}
and that
\begin{equation}\label{lem3.10-2}
\angle\,\gamma(s_{1}) \ge \angle\,\tilde{\gamma}(s_{1}), \quad  
\angle\,\gamma(s) \ge \angle\,\tilde{\gamma}(s).
\end{equation}
By Lemma \ref{lem3.8}, the set $S$ is non-empty. 
By the similar argument in the proof of Lemma \ref{lem3.5}, 
we see that $\sup S = s_{0}$. 
Hence, there exists a decreasing sequence $\{\ve_{i} \}_{i \in \N}$ convergent to $0$ such that 
$s_{0} - \ve_{i}$ is in $S$ for all $i \in \N$. 
For each $i \in \N$, 
there exists an open triangle 
${\rm OT}(\partial \wt{X}, \tilde{\gamma}(s_{1}), \tilde{\gamma}(s_{0} - \ve_{i})) \subset \wt{X}$ 
corresponding to the triangle ${\rm OT}(\partial X, \gamma(s_{1}), \gamma(s_{0} - \ve_{i})) \subset X$ 
such that (\ref{lem3.10-1}) and (\ref{lem3.10-2}) hold for $\gamma(s) = \gamma(s_{0} - \ve_{i})$.
Since 
$\gamma$ is tangent to $\partial X$ at $\gamma (s_{0})$, 
we have 
\[\lim_{s \uparrow s_{0}} \angle\,\gamma(s) = \frac{\pi}{2}.
\]
Thus, by (\ref{lem3.10-2}) for $\gamma(s) = \gamma(s_{0} - \ve_{i})$, we get 
\[
\limsup_{i \to \infty} \angle\,\tilde{\gamma}(s_{0} - \ve_{i}) \le \frac{\pi}{2}.
\]
If 
\[
\liminf_{i \to \infty} \angle\,\tilde{\gamma}(s_{0} - \ve_{i}) < \frac{\pi}{2}
\]
holds, 
the opposite side to $\partial \wt{X}$ of 
${\rm OT}(\partial \wt{X}, \tilde{\gamma}(s_{1}), \tilde{\gamma}(s_{0} - \ve_{i}))$ 
meets $\partial \wt{X}$ for some $s_{2} \in (s_{1}, s_{0})$ sufficiently close to $s_{0}$, 
which contradicts the fact that $\partial \wt{X}$ is totally geodesic. 
Hence, 
\[
\lim_{i \to \infty} \angle\,\tilde{\gamma}(s_{0} - \ve_{i}) = \frac{\pi}{2}
\]
holds. 
Thus, the opposite side to $\partial \wt{X}$ of the limit open triangle 
${\rm OT}(\partial \wt{X}, \tilde{\gamma}(s_{1}), \tilde{\gamma}(s_{0} - \ve_{i}))$ as $i \to \infty$ 
is tangent to $\partial \wt{X}$. 
This is also a contradiction, 
since $\partial \wt{X}$ is totally geodesic. 
Therefore, $\gamma$ does not meet $\partial X$.
$\qedd$
\end{proof}

By the same argument as in the proof of \cite[Lemma 5.1]{KT2}, 
we have the following lemma. 

\begin{lemma}\label{lem2009-01-13}
For any complete connected Riemannian manifold $X$ with smooth boundary $\partial X$ 
and any compact subset $K$ of $\partial X$, 
there exists a locally Lipschitz function $G(t)$ on $[0, \infty)$ 
such that the radial curvature of $X$ with respect to any $\partial X$-segment 
emanating from $K$ is bounded from below by that of the model surface with 
radial curvature function $G(t)$.
\end{lemma}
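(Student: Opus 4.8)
The plan is to produce $G$ as an explicit locally Lipschitz minorant of the pointwise infimum of radial sectional curvatures, chosen negative enough that the associated warping function stays positive on all of $[0,\infty)$. For $t\ge 0$ let $\kappa(t)$ denote the infimum of $K_{X}(\sigma_{t})$ taken over all $\partial X$-segments $\mu:[0,\ell]\lra X$ with $\ell\ge t$ and all $2$-planes $\sigma_{t}\subset T_{\mu(t)}X$ spanned by $\mu'(t)$ and a tangent vector to $X$ at $\mu(t)$. By the definition of the radial curvature bound recalled in the Introduction, a model surface with radial curvature function $G$ gives a lower bound for the radial curvature of $X$ as soon as $G(t)\le\kappa(t)$ for every $t\ge 0$ (a half-open $\partial X$-segment $\mu:[0,\ell)\lra X$ restricted to $[0,t]$ is a $\partial X$-segment contributing to $\kappa(t)$ for each $t<\ell$). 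So it suffices to construct a locally Lipschitz $G\le\kappa$ on $[0,\infty)$ for which the solution of $m''+Gm=0$, $m(0)=1$, $m'(0)=0$, stays positive.

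First I would check, as in the proof of \cite[Lemma 5.1]{KT2}, that $\kappa$ is locally bounded from below, i.e.\ $\inf_{[0,T]}\kappa>-\infty$ for every $T>0$. When $\partial X$ is compact this is immediate, since $\{x\in X\mid d(\partial X,x)\le T\}$ is then compact by completeness and $K_{X}$ is continuous; in general one covers the portion of that slab swept out by $\partial X$-segments of length $\le T$ by finitely many charts with two-sided curvature bounds and takes the minimum. Granting a uniform bound $\kappa\ge c_{T}$ on $[0,T]$, set $\phi(t):=\min\{-1,\ \inf_{s\in[0,t+1]}\kappa(s)\}$; this is a real-valued, non-increasing function, locally bounded below, with $\phi\le\kappa$ and $\phi\le-1$ throughout.

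Next I would regularize $\phi$. Since $\phi$ is non-increasing and locally bounded below, it admits a locally Lipschitz minorant: take $G$ to be the piecewise-linear function with $G(n):=\phi(n+1)$ at each integer $n\ge 0$, linearly interpolated on each $[n,n+1]$. Then $G$ is locally Lipschitz and non-increasing, and, using monotonicity of $\phi$, one has $G(t)\le\phi(n+1)\le\phi(t)\le\kappa(t)$ on $[n,n+1]$, as well as $G(t)\le-1<0$ everywhere. Finally, let $m$ solve $m''+Gm=0$ with $m(0)=1$, $m'(0)=0$; since $G$ is locally Lipschitz, $m$ exists, is $C^{2}$, and is unique. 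Wherever $m>0$ one has $m''=-Gm\ge m>0$, so $m$ is convex there, and with $m'(0)=0$ this forces $m$ to be non-decreasing, hence $m\ge 1>0$ on all of $[0,\infty)$, and $m$ extends evenly to a positive function on $\R$. Thus $\wt{M}:=(\R,d\tilde{x}^{2})\times_{m}(\R,d\tilde{y}^{2})$ and $\wt{X}:=\{\tilde{x}\ge 0\}$ form a model surface with metric (\ref{model-metric}) whose radial curvature function is $G\circ\tilde{\mu}(t)=-m''(t)/m(t)=G(t)$, and for any $\partial X$-segment $\mu:[0,\ell)\lra X$ and any radial $2$-plane $\sigma_{t}$ we get $K_{X}(\sigma_{t})\ge\kappa(t)\ge G(t)=G(\tilde{\mu}(t))$, which is the assertion (with $G$ locally Lipschitz rather than smooth, which is the regularity claimed and all that the later arguments require).

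The only substantial point is the first step, the local lower boundedness of $\kappa$; everything after it is soft. Once a uniform curvature lower bound on each slab $\{d(\partial X,\cdot)\le T\}$ is available, the passage to a locally Lipschitz $G$ is mere interpolation, and positivity of the warping function is secured by the harmless concession $G<0$ (permissible precisely because $G$ need only be a lower bound, not a sharp one). I expect the write-up to simply quote the corresponding slab/covering argument from \cite{KT2}.
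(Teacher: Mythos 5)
Your reconstruction matches what the paper sketches (it just cites \cite[Lemma 5.1]{KT2}, the pointed analogue, without further detail): form the pointwise radial-curvature infimum $\kappa$, pass to a non-increasing minorant $\phi\le -1$, interpolate piecewise-linearly to get a locally Lipschitz $G\le\kappa$, and keep the warping function $m$ positive because $G<0$ forces convexity. The interpolation inequalities and the ODE positivity step are correct as written, and you have correctly isolated local lower boundedness of $\kappa$ as the only substantive point.

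Your handling of that point, however, does not hold up beyond compact $\partial X$. When $\partial X$ is compact the slab $\{d(\partial X,\cdot)\le T\}$ is compact, which is exactly the role compactness of closed metric balls plays in \cite[Lemma 5.1]{KT2}, and the bound follows. But your ``in general'' sentence is false: for non-compact $\partial X$, the set swept out by $\partial X$-segments of length $\le T$ already contains all of $\partial X$ and is non-compact, so there is no finite atlas over which to take a minimum. Nor is this a repairable technicality, because the conclusion itself can fail. On $\{x\ge 0\}$ with the metric $dx^{2}+\cosh(y^{2}x)^{2}\,dy^{2}$, the boundary $\{x=0\}$ is totally geodesic (hence convex), each curve $t\mapsto(t,y_{0})$ is a $\partial X$-segment, and the Gaussian (hence radial) curvature along it is the constant $-y_{0}^{4}$; therefore $\kappa(t)=-\infty$ for every $t\ge 0$ and no locally Lipschitz lower bound $G$ on $[0,\infty)$ can exist. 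The paper's appeal to the pointed result silently imports the compactness it needs. You should restrict to compact $\partial X$ (or explicitly impose a global curvature/geometry hypothesis) rather than assert a finite cover that cannot exist.
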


\begin{proposition}\label{prop2009-01-13}
Let $X$ be a complete connected Riemannian manifold $X$ with smooth boundary $\partial X$. 
If $\partial X$ is convex, 
then the opposite side to $\partial X$ of any open triangle on $X$ 
does not meet $\partial X$. 
\end{proposition}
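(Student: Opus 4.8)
The plan is to obtain the proposition as an immediate combination of the two preceding lemmas, the only step of substance being the construction of a comparison model. Fix a complete connected Riemannian $n$-manifold $X$ with smooth convex boundary $\partial X$ (if no open triangle exists on $X$ the statement is vacuous, so assume one is given). First I would invoke Lemma \ref{lem2009-01-13} to produce a locally Lipschitz function $G$ on $[0,\infty)$ such that the radial curvature of $X$ is bounded from below by that of the model surface $(\wt{X},\partial\wt{X})$ whose radial curvature function is $G$. Recall that for any such model the warping function $m$ solves $m''+Gm=0$ with $m(0)=1$, $m'(0)=0$, so that $\partial\wt{X}$ is automatically totally geodesic --- this is the one structural property of the model that the proof of Lemma \ref{lem3.10} actually exploits.

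Having exhibited such a model, I observe that the pair $(X,\partial X)$ now satisfies every hypothesis of Lemma \ref{lem3.10}: $X$ is complete and connected, $\partial X$ is smooth and convex, and the radial curvature of $X$ is bounded from below by that of $(\wt{X},\partial\wt{X})$. Applying Lemma \ref{lem3.10} verbatim then gives that the opposite side $\gamma$ to $\partial X$ of \emph{any} open triangle $(\partial X,p,q)=(\gamma,\mu_{1},\mu_{2})$ on $X$ does not meet $\partial X$, which is exactly the assertion of the proposition.

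The point I would flag as the main obstacle is a regularity matchup rather than a geometric difficulty: Lemma \ref{lem2009-01-13} only furnishes $G$ locally Lipschitz, whereas a model surface was defined with a \emph{smooth} warping function, and $m$ solving $m''+Gm=0$ is then merely $C^{1,1}$. One must therefore check that the chain of results leading to Lemma \ref{lem3.10} --- ultimately Lemma \ref{lem3.8} on thin open triangles, and through it Lemmas \ref{lem2.2} and \ref{lem2.6} --- remains valid for a comparison model of this reduced regularity. It does, because those arguments use only the total geodesy of $\partial\wt{X}$ together with first-order comparison and monotonicity properties of $m$ that persist under the loss of one derivative; alternatively one may approximate $G$ from below by smooth functions, apply the lemmas in the smooth case, and pass to the limit of the resulting open triangles. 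Once this is noted, the deduction is as clear as the paper claims.
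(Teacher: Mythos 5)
Your proposal is correct and follows exactly the paper's own (unstated) argument: produce a comparison model via Lemma \ref{lem2009-01-13} and then apply Lemma \ref{lem3.10}. The regularity remark about the locally Lipschitz $G$ is a fair point that the paper glosses over, but it does not change the structure of the deduction.
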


\begin{proof}
By the definition of an open triangle, 
both feet $p$ and $q$ of an open triangle ${\rm OT}(\partial X, p, q) \subset X$ 
are contained in a bounded set of $\partial X$ . Hence 
it is clear from Lemmas \ref{lem3.10} and \ref{lem2009-01-13} that 
$\gamma$ does not meet $\partial X$.
$\qedd$
\end{proof}

\section{Alexandrov's convexity}\label{sec:AC}
Our purpose of this section is to establish the Alexandrov convexity (Lemma \ref{lem4.4}). 

\bigskip

Throughout this section, let $(\wt{X}, \partial \wt{X})$ denote 
a model surface with its metric (\ref{model-metric}). 
It follows from Lemma \ref{lem4.1} that 
\begin{equation}\label{lem4.1-0}
\lim_{s \downarrow s_{0}} 
\frac{
d(\tilde{p}, \tilde{\tau}_{c}(s)) - d(\tilde{p}, \tilde{\tau}_{c}(s_{0}))
}
{
s - s_{0}
}
\ge 0
\end{equation}
for each $s_{0} > \tilde{y}(\tilde{p})$. The following two lemmas are useful for proving 
that the function $\cD$ defined in Lemma \ref{lem4.4} is locally Lipschitz. 
In the first lemma, we will prove that the left-hand term of the equation (\ref{lem4.1-0}) 
is {\bf strictly} positive\,:

\begin{lemma}\label{lem4.2}
For each $\partial \wt{X}$-ray $\tilde{\mu} : [0, \infty) \rightarrow \wt{X}$ and each number 
$a_{0}, c_{0} > 0$, $s_{0} > \tilde{y}(\tilde{\mu}(0))$, 
there exist numbers $\ve_{1} > 0$ and $\delta > 0$ such that 
\begin{equation}\label{lem4.2-0}
|d(\tilde{\mu}(a), \tilde{\tau}_{c}(s)) - d(\tilde{\mu}(a), \tilde{\tau}_{c}(s_{0}))| 
\ge 
|s -s_{0}| \cdot m(c) \cdot \sin \ve_{1}
\end{equation}
holds for all $a \in (a_{0} - \delta, a_{0} + \delta)$, 
$c \in (c_{0} - \delta, c_{0} + \delta)$, and 
$s \in (s_{0} - \delta, s_{0} + \delta)$.
\end{lemma}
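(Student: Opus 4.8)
The plan is to quantify the strict monotonicity of Lemma~\ref{lem4.1} — whose infinitesimal form is (\ref{lem4.1-0}) — by combining the first variation formula with the Clairaut relation (\cite[Theorem~7.1.2]{SST}). Write $y_{0} := \tilde{y}(\tilde{\mu}(0))$; since $\tilde{\mu}$ is a $\partial\wt{X}$-ray it coincides with the meridian $\tilde{y}\equiv y_{0}$, so $\tilde{y}(\tilde{\mu}(a)) = y_{0}$ for every $a$, while $s_{0}>y_{0}$ by hypothesis. First I would fix a small $\delta>0$ with $\delta<\min\{a_{0}, c_{0}, s_{0}-y_{0}\}$, form the compact box $\overline{B} := [a_{0}-\delta, a_{0}+\delta]\times[c_{0}-\delta, c_{0}+\delta]\times[s_{0}-\delta, s_{0}+\delta]$, and set $D := \max_{\overline{B}} d(\tilde{\mu}(a), \tilde{\tau}_{c}(s))$, which is finite by continuity of the distance function.

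The heart of the argument will be a uniform positive lower bound on the Clairaut constant of connecting minimal geodesics. For $(a,c,s)\in\overline{B}$, let $\tilde{\gamma}(t) = (\tilde{x}(t), \tilde{y}(t))$, $t\in[0, L]$, be any unit speed minimal geodesic from $\tilde{\mu}(a)$ to $\tilde{\tau}_{c}(s)$, so $L\le D$. Since $|\tilde{x}'|\le 1$ and $\tilde{x}(0)=a$, the whole geodesic lies in $\{\tilde{x}\le M_{0}\}$ with $M_{0} := a_{0}+\delta+D$, hence $m(\tilde{x}(t))\ge m_{\min} := \min_{[0,M_{0}]} m>0$. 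Writing $\nu$ for the Clairaut constant ($m(\tilde{x})^{2}\tilde{y}'\equiv\nu$), the fact that $\tilde{\gamma}$ runs from $\tilde{y}=y_{0}$ up to $\tilde{y}=s>y_{0}$ with $\tilde{y}'=\nu/m(\tilde{x})^{2}$ of constant sign forces $\nu>0$, and then
\[
s - y_{0} = \nu\int_{0}^{L}\frac{dt}{m(\tilde{x}(t))^{2}} \le \frac{\nu D}{m_{\min}^{2}}
\]
gives $\nu\ge\nu_{0} := m_{\min}^{2}(s_{0}-y_{0}-\delta)/D > 0$, uniformly over $\overline{B}$ and over the choice of minimal geodesic.

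To conclude, I would fix admissible $a, c$, set $g(s):=d(\tilde{\mu}(a), \tilde{\tau}_{c}(s))$ — which is $m(c)$-Lipschitz in $s$, hence absolutely continuous — and apply Dini's theorem as recalled above together with the first variation formula: at a.e.\ $t$, $g'(t)$ equals $\langle\tilde{\tau}_{c}'(t), \tilde{\gamma}'(L)\rangle = m(c)^{2}\tilde{y}'(L) = \nu$ for a minimal geodesic $\tilde{\gamma}$ from $\tilde{\mu}(a)$ to $\tilde{\tau}_{c}(t)$, which is $\ge\nu_{0}$ by the above. Integrating yields $|g(s) - g(s_{0})|\ge\nu_{0}|s-s_{0}|$ on $(s_{0}-\delta, s_{0}+\delta)$; choosing $\varepsilon_{1}\in(0,\pi/2)$ with $\sin\varepsilon_{1}\le\nu_{0}/\max_{[c_{0}-\delta, c_{0}+\delta]} m$ then gives $\nu_{0}\ge m(c)\sin\varepsilon_{1}$ and hence (\ref{lem4.2-0}).

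The main obstacle is the uniform Clairaut bound $\nu\ge\nu_{0}>0$: Lemma~\ref{lem4.1} is not quantitative, and the obvious pointwise inequality $m(\tilde{x})\ge|\nu|$ along $\tilde{\gamma}$ (from $(\tilde{x}')^{2}=1-\nu^{2}/m(\tilde{x})^{2}\ge0$) only bounds $\nu$ from above. What rescues the estimate is that a minimal geodesic between points of $\overline{B}$ cannot run off to large $\tilde{x}$, so $m$ stays bounded below along it; this is where the compactness of $\overline{B}$ (finiteness of $D$) is used. A minor point to keep in mind is that one must know $\tilde{\mu}$ is a meridian, so that $\tilde{y}(\tilde{\mu}(a))$ is the same constant $y_{0}$ for all $a$ in the range.
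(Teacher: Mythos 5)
Your proof is correct, and the overall strategy coincides with the paper's: establish a uniform positive lower bound on the Clairaut constant of minimal geodesics from $\tilde{\mu}(a)$ to $\tilde{\tau}_{c}(s)$ (equivalently, a uniform angle bound strictly below $\pi/2$ against $\tilde{\tau}_{c}$), then integrate via the first-variation formula. What is genuinely different, and arguably an improvement, is how you obtain that uniform bound. The paper's proof argues non-constructively --- ``no connecting minimal geodesic is perpendicular to $\tilde{\tau}_{c}$, so there exists $\ve_{1}$ \ldots'' --- implicitly invoking compactness of the parameter box, then cites \cite[Lemma~2.1]{IT3} for the resulting Lipschitz lower bound and defers the remaining details to \cite[Lemma~4.2]{KT2}. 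You instead extract an explicit bound $\nu\ge\nu_{0}:=m_{\min}^{2}(s_{0}-y_{0}-\delta)/D$ by integrating $\tilde{y}'=\nu/m(\tilde{x})^{2}$ along the geodesic: it must raise $\tilde{y}$ by at least $s_{0}-\delta-y_{0}>0$ within arc-length $D$, which is impossible if $\nu$ is too small, and the $\tilde{x}$-coordinate cannot wander past $a_{0}+\delta+D$, so $m$ stays bounded below. This makes the argument self-contained and quantitative. Your observation that $\nu_{0}\le\nu\le m(c)\le\max m$ (the Clairaut relation at the endpoint) guarantees the choice of $\ve_{1}$ is admissible. One step worth a sentence of elaboration: where $g$ is differentiable, the two one-sided first-variation inequalities pinch $g'(t)=\langle\tilde{\tau}'_{c}(t),\tilde{\gamma}'(L)\rangle=\nu$ exactly, for every choice of minimal geodesic $\tilde{\gamma}$, which is what lets you apply the Clairaut bound uniformly before integrating.
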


\begin{proof}
We choose a positive number $\delta$ less than 
$\min \{ a_{0}, c_{0},  s_{0} - \tilde{y}(\tilde{\mu}(0))\}$, 
and fix it. Let $a, c, s$ be any numbers in 
$(a_{0} - \delta, a_{0} + \delta)$, 
$(c_{0} - \delta, c_{0} + \delta)$, and 
$(s_{0} - \delta, s_{0} + \delta)$, respectively. 
Since no minimal geodesic segment joining $\tilde{\mu}(a)$ to $\tilde{\tau}_{c}(s)$ is perpendicular 
to $\tilde{\tau}_{c}$, there exists a positive number 
$\ve_{1} \in (0, \pi / 2)$ such that \begin{equation}\label{lem4.2-1}
\Phi (\tilde{\gamma}, s) := 
\angle (\tilde{\tau}'_{c}(s), \tilde{\gamma}'(d(\tilde{\mu}(a), \tilde{\tau}_{c}(s)))) 
\le \frac{\pi}{2} - \ve_{1}
\end{equation}
holds for all 
$a \in (a_{0} - \delta, a_{0} + \delta)$
$c \in (c_{0} - \delta, c_{0} + \delta)$, 
$s \in (s_{0} - \delta, s_{0} + \delta)$, and 
minimal geodesic segments $\tilde{\gamma}$ joining $\tilde{\mu}(a)$ to $\tilde{\tau}_{c}(s)$. 
By \cite[Lemma 2.1]{IT3} and (\ref{lem4.2-1}), we have  
\[
\liminf_{s \downarrow s_{0}}
\frac{
d(\tilde{\mu}(a), \tilde{\tau}_{c}(s)) - d(\tilde{\mu}(a), \tilde{\tau}_{c}(s_{0}))
}
{
d(\tilde{\tau}_{c}(s), \tilde{\tau}_{c}(s_{0}))
}
\ge 
\sin \ve_{1}.
\]
This equation implies (\ref{lem4.2-0}) (see the proof of \cite[Lemma 4.2]{KT2} 
for the detail of this proof).$\qedd$
\end{proof}

\bigskip

Notice that, for given a triple $(a, b, c)$ of positive numbers $a, b, c$, 
there exists an open triangle 
${\rm OT}(\partial \wt{X}, \tilde{p}, \tilde{q})$ in $\wt{X}$ 
with $\angle\,\tilde{p}, \angle\,\tilde{q} \in (0, \pi)$ satisfying 
$a = d(\partial \wt{X},\tilde{p})$, 
$b = d(\tilde{p},\tilde{q})$, and $c = d(\partial \wt{X},\tilde{q})$ 
if and only if 
\[
(a, b, c) \in T := \{(a, b, c) \in \R^{3}\,;\, a, c > 0, |a - c| < b \}.
\]
By Lemma \ref{lem4.1}, 
the existence of such a triangle ${\rm OT}(\partial \wt{X}, \tilde{p}, \tilde{q}) 
= (\partial \wt{X}, \tilde{p}, \tilde{q}\,;\,\tilde{\gamma}, \tilde{\mu}_{1}, \tilde{\mu}_{2})$ 
is unique up to an isometry except for the opposite side $\tilde{\gamma}$ to $\partial \wt{X}$. 
Hence, 
\[
\Theta (a, b, c) := |\tilde{y}( \tilde{\mu}_{1}(0) ) - \tilde{y}(\tilde{\mu}_{2} (0) ) |
\]
is a well-defined function on the set $T$. 

\begin{lemma}\label{lem4.3}
The function $\Theta (a, b, c)$ is locally Lipschitz.
\end{lemma}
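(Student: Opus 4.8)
The plan is to reduce the local Lipschitz continuity of $\Theta(a, b, c)$ on the open set $T$ to the explicit Lipschitz estimate of Lemma \ref{lem4.2}, just as one does for analogous comparison-angle functions on surfaces of revolution (cf. the proof of \cite[Lemma 4.3]{KT2}). Fix a point $(a_0, b_0, c_0) \in T$ and a compact box $Q$ around it still contained in $T$; it suffices to bound $|\Theta(a, b, c) - \Theta(a_0, b_0, c_0)|$ by a constant multiple of $|a - a_0| + |b - b_0| + |c - c_0|$ for $(a, b, c), (a_0, b_0, c_0) \in Q$. By the triangle inequality for the difference quotient it is enough to estimate the variation of $\Theta$ separately in each of the three variables while the other two are frozen. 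So I would treat three cases: (i) vary $c$ only, (ii) vary $b$ only, (iii) vary $a$ only.

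For case (i), realize the open triangle $(\partial\wt X, \tilde p, \tilde q)$ with the opposite side $\tilde\gamma$ of length $b$ so that $\tilde p$ lies on a fixed $\partial\wt X$-ray $\tilde\mu$, with $\tilde p = \tilde\mu(a)$, and $\tilde q$ lies on the arc $\tilde\tau_c$ of $\tilde x = c$; then $\Theta(a, b, c) = |\tilde y(\tilde\mu(0)) - \tilde y(\tilde q)|$ and, since $\tilde\mu$ is a meridian (so $\tilde y$ is constant along it) and the metric is a warped product, the dependence of $\tilde y(\tilde q)$ on the parameter $s$ along $\tilde\tau_c$ is affine with slope $1$. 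Changing $c$ to $c'$ (with the same $a$, $b$) moves $\tilde q$ to the point $\tilde\tau_{c'}(s')$ at distance $b$ from $\tilde p$; by Lemma \ref{lem4.2} applied to $\tilde\mu$ (and, after a reflection, to the arc $\tilde\tau_c$ viewed as $\tilde\tau_{c'}$), the parameter displacement along the relevant arc is controlled, $|s' - s| \le |d(\tilde\mu(a), \tilde\tau_{c'}(s)) - d(\tilde\mu(a), \tilde\tau_{c'}(s'))| / (m(c')\sin\ve_1) $, and the numerator is at most $|c - c'|$ by the triangle inequality (moving from $\tilde\tau_c(s)$ to $\tilde\tau_{c'}(s)$ along the meridian $\tilde y = \tilde y(\tilde q)$ costs exactly $|c - c'|$). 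Combining these gives a Lipschitz bound for $\Theta$ in $c$ on $Q$, with constant depending only on the $\delta$, $\ve_1$ furnished by Lemma \ref{lem4.2} for the compact box $Q$ and on $\min_Q m(c) > 0$. Case (ii) is the same kind of estimate with the roles interchanged: now $b$ varies, $\tilde q$ slides along $\tilde\tau_c$, and $|d(\tilde p, \tilde\tau_c(s)) - d(\tilde p, \tilde\tau_c(s'))| = |b - b'|$, so Lemma \ref{lem4.2} again bounds $|s - s'|$, hence the change in $\Theta$. Case (iii) (varying $a$) is handled by moving $\tilde p$ along its meridian $\tilde\mu$: since the meridian is a $\partial\wt X$-ray, $\tilde p$ and $\tilde p' = \tilde\mu(a')$ are joined by a geodesic of length $|a - a'|$ lying on $\tilde\mu$, so $|d(\tilde p, \tilde\tau_c(s)) - d(\tilde p', \tilde\tau_c(s))| \le |a - a'|$; keeping $b$, $c$ fixed then forces $\tilde q$ to slide by a parameter amount again controlled via Lemma \ref{lem4.2}, giving the Lipschitz bound in $a$.

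The main obstacle I anticipate is the bookkeeping that makes Lemma \ref{lem4.2} applicable uniformly over the box $Q$: Lemma \ref{lem4.2} is stated for a fixed $\partial\wt X$-ray and fixed reference values $a_0, c_0, s_0$, producing $\ve_1, \delta$ that a priori depend on those, so one must first observe that as $(a, b, c)$ ranges over the \emph{compact} box $Q$, the corresponding configurations $(\tilde\mu, a, c, s)$ form a compact family (up to the isometries $\tilde y \mapsto \tilde y + \text{const}$, under which everything here is invariant), hence admit common $\ve_1, \delta > 0$; a compactness/covering argument handles this. A second, smaller point is to make sure that at each stage the triangle remains nondegenerate with $\angle\tilde p, \angle\tilde q \in (0, \pi)$, i.e. that the perturbed triple stays in $T$ — but this is automatic because $Q \subset T$ and $T$ is open, provided the perturbations are small, which is all that local Lipschitz continuity requires. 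Finally, one must invoke the uniqueness (up to isometry, modulo the opposite side) of the model open triangle with prescribed $(a, b, c)$, established right before the statement via Lemma \ref{lem4.1}, so that $\Theta$ is genuinely a function of $(a, b, c)$ and the estimates above are estimates of $\Theta$ itself and not of some arbitrary representative.
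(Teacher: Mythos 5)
Your proposal is correct and follows essentially the same route as the paper: fix $(a_0,b_0,c_0)$, realize the model triangle with $\tilde p$ on a meridian and $\tilde q$ on the parallel $\tilde x = c$, estimate the one-variable increments via the triangle inequality together with Lemma \ref{lem4.2}, and divide by $m(c)\sin\ve_1$. The only cosmetic difference is that the paper works out the $a$-increment in detail, obtains the $c$-increment from the symmetry $\Theta(a,b,c)=\Theta(c,b,a)$, and leaves the $b$-increment as an exercise, whereas you treat all three directions directly; and your remark about uniformity of $\ve_1,\delta$ over a compact box is a point the paper leaves implicit (Lemma \ref{lem4.2} already provides constants valid on a full product neighborhood of $(a_0,c_0,s_0)$, which is what makes the implicit passage legitimate).
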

 
\begin{proof}
Choose any point $(a_{0}, b_{0}, c_{0}) \in T$, and fix it. 
Let $\tilde{\mu}_{1} : [0, \infty) \rightarrow
 \wt{X}$ be the $\partial \wt{X}$-ray with 
$\tilde{y} (\tilde{\mu}_{1}(0)) = 0$. 
Moreover, we choose the $\partial \wt{X}$-ray $\tilde{\mu}_{2} : [0, \infty) \rightarrow
 \wt{X}$ 
in such a way that $d(\tilde{\mu}_{1}(a_{0}), \tilde{\mu}_{2}(c_{0})) = b_{0}$ 
and $\tilde{y} (\tilde{\mu}_{2}(0)) > 0$. 
By setting $\tilde{p}_{0} := \tilde{\mu}_{1}(a_{0})$ and $\tilde{q}_{0} := \tilde{\mu}_{2}(c_{0})$, 
we hence get an open triangle 
${\rm OT}(\partial \wt{X}, \tilde{p}_{0}, \tilde{q}_{0}) \subset \wt{X}$ 
with $\angle\,\tilde{p}_{0}, \angle\,\tilde{q}_{0} \in (0, \pi)$ satisfying 
\[
a_{0} = d(\partial \wt{X},\tilde{p}_{0}), \quad 
b_{0} = d(\tilde{p}_{0}, \tilde{q}_{0}), \quad 
c_{0} = d(\partial \wt{X},\tilde{q}_{0}).
\]
First we will prove that 
\begin{equation}\label{lem4.3-1}
|\Theta (a_{0} + \Delta a, b_{0}, c_{0}) - \Theta (a_{0}, b_{0}, c_{0}) | 
\le 
\frac{1}{m(c_{0}) \sin \ve_{1}} |\Delta a| 
\end{equation}
for all $\Delta a \in \R$ with $|\Delta a| < \delta$. 
Here the numbers $\ve_{1}$ and $\delta$ are the constants guaranteed to $a_{0}$, $c_{0}$, 
and $s_{0} := \tilde{y} (\tilde{\mu}_{2}(0)) > 0$ in Lemma \ref{lem4.2}. 
Let $\tilde{\tau}_{c_{0}} : \R \rightarrow \wt{X}$ be the arc $\tilde{x} = c_{0}$. 
Then, we may find a point $\tilde{q}_{\Delta a}$ on $\tilde{\tau}_{c_{0}}$ satisfying 
$b_{0} = d(\tilde{p}_{\Delta a}, \tilde{q}_{\Delta a})$, 
where $\tilde{p}_{\Delta a} := \tilde{\mu}_{1}(a_{0} + \Delta a)$. 
Thus, we also get an open triangle 
${\rm OT}(\partial \wt{X}, \tilde{p}_{\Delta a}, \tilde{q}_{\Delta a}) \subset \wt{X}$ with 
$\angle\,\tilde{p}_{\Delta a}, \angle\,\tilde{q}_{\Delta a} \in (0, \pi)$ satisfying 
\[
a_{0} + \Delta a = d(\partial \wt{X},\tilde{p}_{\Delta a}), \quad 
b_{0} = d(\tilde{p}_{\Delta a}, \tilde{q}_{\Delta a}), \quad 
c_{0} = d(\partial \wt{X},\tilde{q}_{\Delta a}).
\]
Let $\tilde{\mu}_{\Delta a}$ denote the side 
of ${\rm OT}(\partial \wt{X}, \tilde{p}_{\Delta a}, \tilde{q}_{\Delta a})$ joining $\partial \wt{X}$ to 
$\tilde{q}_{\Delta a}$.  
By definition, 
\begin{equation}\label{lem4.3-2}
\Theta (a_{0} + \Delta a, b_{0}, c_{0}) 
= 
\tilde{y}(\tilde{\mu}_{\Delta a} (0)),
\end{equation}
and 
\begin{equation}\label{lem4.3-3}
\Theta (a_{0}, b_{0}, c_{0}) 
=
\tilde{y}(\tilde{\mu}_{2}(0))
= 
s_{0}.
\end{equation}
Here we may assume that $\tilde{y}(\tilde{\mu}_{\Delta a} (0)) > 0$. 
It is clear from (\ref{lem4.3-2}) and (\ref{lem4.3-3}) that 
\begin{equation}\label{lem4.3-4}
|\Delta_{a} \Theta | = | \tilde{y}(\tilde{\mu}_{\Delta a} (0)) - s_{0} |,
\end{equation}
where we set 
\[
\Delta_{a} \Theta := \Theta (a_{0} + \Delta a, b_{0}, c_{0}) - \Theta (a_{0}, b_{0}, c_{0}).
\]
Thus, by (\ref{lem4.3-4}), the length $|\Delta s|$ of the subarc of $\tilde{\tau}_{c_{0}}$ with end points 
$\tilde{q}_{\Delta a}$ and $\tilde{q}_{0}$ is equal to 
\begin{equation}\label{lem4.3-5}
|\Delta s| 
=
m(c_{0}) \cdot |\tilde{y}(\tilde{\mu}_{\Delta a} (0)) - s_{0} |
=
m(c_{0}) \cdot |\Delta_{a} \Theta |.
\end{equation}
It follows from Lemma \ref{lem4.2} and (\ref{lem4.3-5}) that 
\begin{equation}\label{lem4.3-6}
|d(\tilde{p}_{0}, \tilde{q}_{0}) - d(\tilde{p}_{0}, \tilde{q}_{\Delta a})| 
\ge 
|s_{0} - \tilde{y}(\tilde{\mu}_{\Delta a} (0))| \cdot m(c_{0}) \cdot \sin \ve_{1} 
= 
|\Delta s| \sin \ve_{1}. 
\end{equation}
Since 
\[
b_{0} = d(\tilde{p}_{0}, \tilde{q}_{0}) = d(\tilde{p}_{\Delta a}, \tilde{q}_{\Delta a}),
\]
we get, by (\ref{lem4.3-6}), 
\begin{equation}\label{lem4.3-7}
|d(\tilde{p}_{\Delta a}, \tilde{q}_{\Delta a}) - d(\tilde{p}_{0}, \tilde{q}_{\Delta a})| 
\ge 
|\Delta s| \sin \ve_{1}. 
\end{equation}
By the triangle inequality, we have
\begin{equation}\label{lem4.3-8}
|\Delta a| = d(\tilde{p}_{0}, \tilde{p}_{\Delta a}) 
\ge 
|d(\tilde{p}_{\Delta a}, \tilde{q}_{\Delta a}) - d(\tilde{p}_{0}, \tilde{q}_{\Delta a})|.
\end{equation}
By combining the equations (\ref{lem4.3-5}), (\ref{lem4.3-7}), and (\ref{lem4.3-8}), 
we obtain (\ref{lem4.3-1}). 
Since $\Theta (a, b, c) = \Theta (c, b, a)$ for all $(a, b, c) \in T$, 
it is clear that 
\begin{equation}\label{lem4.3-9}
|\Theta (a_{0}, b_{0}, c_{0} +  \Delta c) - \Theta (a_{0}, b_{0}, c_{0}) | 
\le 
\frac{1}{m(c_{0}) \sin \ve_{1}} |\Delta c| 
\end{equation}
for all $\Delta c \in \R$ with $|\Delta c| < \delta$. 
We omit the proof of the following inequality, 
since the proof is similar to that of (\ref{lem4.3-1})\,:
\begin{equation}\label{lem4.3-10}
|\Theta (a_{0}, b_{0} +  \Delta b, c_{0}) - \Theta (a_{0}, b_{0}, c_{0}) | 
\le 
\frac{1}{m(c_{0}) \sin \ve_{1}} |\Delta b| 
\end{equation}
for all $\Delta b \in \R$ with $|\Delta b| < \delta$. 
Therefore, the function $\Theta (a, b, c)$ is locally Lipschitz at $(a_{0}, b_{0}, c_{0}) \in T$ by 
(\ref{lem4.3-1}), (\ref{lem4.3-9}), and (\ref{lem4.3-10}).
$\qedd$
\end{proof}

\begin{lemma}\label{lem4.4}{\bf (Alexandrov's convexity for open triangles)} 
Let $(X, \partial X)$ be a complete connected 
Riemannian $n$-dimensional manifold $X$ with smooth convex boundary $\partial X$ 
whose radial curvature is bounded from below by that of $(\wt{X}, \partial \wt{X})$, and let 
${\rm OT}(\partial X, p, q) = (\partial X, p, q\,;\,\gamma, \mu_{1}, \mu_{2})$ be a non-degenerate open triangle in $X$, i.e., $\angle\,p, \angle\,q \in (0, \pi)$. 
Assume that, for each open triangle 
\[
{\rm OT}(\partial X, \mu_{1}(at), \mu_{2}(ct)) 
= 
(\partial X, \mu_{1}(at), \mu_{2}(ct)\,;\,\gamma_{t}, \mu_{1}|_{[0, \,at]}, \mu_{2}|_{[0, \,ct]}), \quad 
t \in (0, 1], 
\]
where $a = d(\partial X, p)$ and $c = d(\partial X, q)$, there exists a unique open triangle 
\[
{\rm OT}(\partial \wt{X}, \tilde{\mu}_{1}^{(t)}(at), \tilde{\mu}_{2}^{(t)}(ct)) 
= 
(\partial \wt{X}, \tilde{\mu}_{1}^{(t)}(at), \tilde{\mu}_{2}^{(t)}(ct)\,;\,
\tilde{\gamma}_{t}, \tilde{\mu}_{1}^{(t)}|_{[0, \,at]}, \tilde{\mu}_{2}^{(t)}|_{[0, \,ct]})
\]
up to an isometry  in $\wt{X}$ such that  
\begin{equation}\label{lem4.4-length1}
d(\partial \wt{X},\tilde{\mu}_{1}^{(t)}(at))=d(\partial X, \mu_{1}(at)), \quad 
d(\partial \wt{X},\tilde{\mu}_{2}^{(t)}(ct))=d(\partial X, \mu_{2}(ct)),
\end{equation}
\begin{equation}\label{lem4.4-length2}
d(\tilde{\mu}_{1}^{(t)}(at), \tilde{\mu}_{2}^{(t)}(ct))=d(\mu_{1}(at), \mu_{2} (ct)),  
\end{equation}
and that 
\begin{equation}\label{lem4.4-angle}
\angle\,\mu_{1}(at) \ge \angle\,\tilde{\mu}_{1}^{(t)}(at), \quad
\angle\,\mu_{2}(ct) \ge \angle\,\tilde{\mu}_{2}^{(t)}(ct).
\end{equation}
Then, the function 
\[
\cD (t) := | \tilde{y}(\tilde{\mu}_{1}^{(t)}(0)) - \tilde{y}(\tilde{\mu}_{2}^{(t)} (0)) |
\]
is locally Lipschitz on $(0, 1)$, 
and non-increasing on $(0, 1]$.
\end{lemma}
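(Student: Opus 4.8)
The plan is to reduce $\cD$ to the function $\Theta$ of Lemma~\ref{lem4.3}, which immediately yields the local Lipschitz property, and then to prove monotonicity by comparing first variations in $X$ and in $\wt X$, integrating the resulting inequality $\cD'\le 0$ by means of Dini's theorem (this is exactly the place where the Cantor--Lebesgue phenomenon recalled before the lemma must be kept in mind).

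First set $b(t):=d(\mu_1(at),\mu_2(ct))$. Since $\mu_1,\mu_2$ are $\partial X$-segments we have $d(\partial X,\mu_1(at))=at$ and $d(\partial X,\mu_2(ct))=ct$, and because the model open triangle with prescribed side lengths is unique up to isometry — here the uniqueness of the opposite side in $\wt X$ enters — we get
\[
\cD(t)=\Theta\bigl(at,\ b(t),\ ct\bigr),\qquad t\in(0,1].
\]
Now $t\mapsto(at,ct)$ is smooth, and $b$ is $(a+c)$-Lipschitz since $|b(t)-b(t')|\le d(\mu_1(at),\mu_1(at'))+d(\mu_2(ct),\mu_2(ct'))\le(a+c)|t-t'|$; moreover $(at,b(t),ct)$ lies in the open set $T$ for $t\in(0,1)$, because the sub-triangle $(\partial X,\mu_1(at),\mu_2(ct))$ has angles in $(0,\pi)$ by the hypothesis together with $\angle p,\angle q\in(0,\pi)$. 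Since $\Theta$ is locally Lipschitz on $T$ by Lemma~\ref{lem4.3}, the composition $\cD$ is locally Lipschitz on $(0,1)$, and as $\cD$ is continuous up to $t=1$ it suffices to prove that it is non-increasing on $(0,1)$.

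For monotonicity, by Dini's theorem $\cD(t_2)-\cD(t_1)=\int_{t_1}^{t_2}\cD'(t)\,dt$ for $[t_1,t_2]\subset(0,1)$, so it is enough to show $\cD'(t)\le 0$ wherever it exists. At a.e.\ $t$ — namely where the opposite side $\gamma_t$ of $(\partial X,\mu_1(at),\mu_2(ct))$ is the unique minimal geodesic, so $b$ is differentiable — the first variation formula gives $b'(t)=a\cos\angle\mu_1(at)+c\cos\angle\mu_2(ct)$. The key computation is the analogue in the model: writing $\widetilde\angle_1:=\angle\tilde\mu_1^{(t)}(at)$ and $\widetilde\angle_2:=\angle\tilde\mu_2^{(t)}(ct)$ for the vertex angles of $\tilde\Delta_t$, the first variation of distance in $\wt X$ yields, wherever $\Theta$ is differentiable,
\[
\frac{\partial\Theta}{\partial A}=-\cos\widetilde\angle_1\cdot\frac{\partial\Theta}{\partial B},\qquad
\frac{\partial\Theta}{\partial C}=-\cos\widetilde\angle_2\cdot\frac{\partial\Theta}{\partial B},\qquad \frac{\partial\Theta}{\partial B}\ge 0,
\]
the inequality $\partial\Theta/\partial B\ge 0$ expressing that lengthening the opposite side with the legs fixed widens the model triangle, a consequence of Lemma~\ref{lem4.1}. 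Combining these with the chain rule and with the hypothesis $\angle\mu_1(at)\ge\widetilde\angle_1$, $\angle\mu_2(ct)\ge\widetilde\angle_2$ (hence $\cos\angle\mu_1(at)\le\cos\widetilde\angle_1$, $\cos\angle\mu_2(ct)\le\cos\widetilde\angle_2$) gives
\[
\cD'(t)=\frac{\partial\Theta}{\partial B}\,\bigl[\,a(\cos\angle\mu_1(at)-\cos\widetilde\angle_1)+c(\cos\angle\mu_2(ct)-\cos\widetilde\angle_2)\,\bigr]\le 0 .
\]

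The main obstacle is that $\Theta$ is only Lipschitz, not $C^1$, so the chain rule above is legitimate only where $\Theta$ is differentiable at $(at,b(t),ct)$, and it is not automatic that the curve $t\mapsto(at,b(t),ct)$ meets the null singular set of $\Theta$ in a null parameter set — this is precisely the subtlety behind the paper's Cantor--Lebesgue remark. I would circumvent it by replacing the infinitesimal computation with a genuinely smooth comparison function: for $t_1<t_2$, choose $\partial\wt X$-rays $\tilde\nu_1,\tilde\nu_2$ with $\tilde y(\tilde\nu_1(0))=0$, $\tilde y(\tilde\nu_2(0))=\cD(t_1)$, and set $\beta(t):=d_{\wt X}(\tilde\nu_1(at),\tilde\nu_2(ct))$, which is smooth on the relevant interval and satisfies $\beta(t_1)=b(t_1)$ and $\Theta(at,\beta(t),ct)\equiv\cD(t_1)$. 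One then proves $b(t)\le\beta(t)$ on $[t_1,t_2]$ by a one-sided-derivative (maximum-principle) argument: at a first crossing time $t^\ast$ one has $b(t^\ast)=\beta(t^\ast)$, so by uniqueness the model triangle with legs $at^\ast,ct^\ast$ and opposite side $b(t^\ast)$ is $\tilde\Delta_{t^\ast}$, its feet-separation is $\cD(t_1)$ and its angles are $\widetilde\angle_i(t^\ast)$, whence $D^+b(t^\ast)\le a\cos\angle\mu_1(at^\ast)+c\cos\angle\mu_2(ct^\ast)\le a\cos\widetilde\angle_1(t^\ast)+c\cos\widetilde\angle_2(t^\ast)=\beta'(t^\ast)$; monotonicity of $\Theta$ in its middle argument then yields $\cD(t)=\Theta(at,b(t),ct)\le\Theta(at,\beta(t),ct)=\cD(t_1)$ on $[t_1,t_2]$. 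Propagating this comparison past $t^\ast$ is the delicate step where the non-smoothness of $b$ (equivalently semiconcavity of the distance function along the chosen curves) must be exploited; secondary technical points are the non-degeneracy of the sub-triangles, the injectivity-radius hypotheses if one instead assembles $\tilde\Delta_t$ from thin pieces via Lemmas~\ref{lem3.8} and~\ref{lem3.2}, and the usual care at the degenerate angle values.
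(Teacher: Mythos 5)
Your first step --- writing $\cD(t)=\Theta(at,\varphi(t),ct)$ with $\varphi(t):=d(\mu_1(at),\mu_2(ct))$ and invoking Lemma~\ref{lem4.3} --- is exactly the paper's argument for local Lipschitzness, and your subsequent diagnosis of the obstacle is also exactly right: $\Theta$ is merely Lipschitz, so you cannot push the chain rule through the composition, and the partial-derivative identities for $\Theta$ you write down are only heuristics. But the workaround you sketch is not the one the paper uses, and as you yourself concede, it is left with a real gap: the ``first crossing'' / maximum-principle argument requires a propagation step past $t^\ast$ that a one-sided derivative inequality at a single point does not supply, and you do not fill it.

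The paper's resolution is simpler and avoids the propagation problem entirely. Since $\cD$ is locally Lipschitz, Dini's theorem already gives $\cD(t_2)-\cD(t_1)=\int_{t_1}^{t_2}\cD'(t)\,dt$, so it suffices to show $\cD'(t_0)\le 0$ at each $t_0$ where $\cD$ (and $\varphi$) is differentiable --- a full-measure set. At such a $t_0$, one does not differentiate $\Theta$; instead one introduces the auxiliary function $\wt\psi(t):=d(\tilde\mu(at),\tilde\eta(ct))$, where $\tilde\mu,\tilde\eta$ are the two $\partial\wt X$-rays passing through the vertices of the model triangle $\wt\triangle_{t_0}$ at time $t_0$. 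Because those rays have fixed feet-separation $\cD(t_0)$, one has $\Theta(at,\wt\psi(t),ct)\equiv\cD(t_0)$ identically, and $\wt\psi(t_0)=\varphi(t_0)$. The uniqueness hypothesis on $\tilde\gamma_{t_0}$ is used to prove (via the midpoint estimate in the paper's equations \eqref{lem4.4-1.2new}--\eqref{lem4.4-1.9new}) that $\wt\psi$ is genuinely differentiable at $t_0$ with
\[
\wt\psi'(t_0)=\cos(\angle\,\tilde\mu_1^{(t_0)}(at_0))+\cos(\angle\,\tilde\mu_2^{(t_0)}(ct_0)),
\]
and the analogous computation for $\varphi$ together with \eqref{lem4.4-angle} yields $\varphi'(t_0)\le\wt\psi'(t_0)$. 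Combining $\varphi(t_0)=\wt\psi(t_0)$, $\varphi'(t_0)\le\wt\psi'(t_0)$, the monotonicity of $\Theta$ in its middle argument (from Lemma~\ref{lem4.1}), and the local Lipschitz bound on $\Theta$ then gives $\cD(t_0+h)\le\cD(t_0)+o(h)$, hence $\cD'(t_0)\le 0$, without ever applying a chain rule to $\Theta$. In short: your $\beta$ is the paper's $\wt\psi$, but the paper compares $\varphi$ and $\wt\psi$ only infinitesimally at a single differentiability point and then integrates, whereas your interval-level comparison requires a propagation lemma you do not prove. You should replace the ``first crossing'' argument by the pointwise derivative estimate to close the gap.
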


\begin{proof}
We will state the outline of the proof, 
since the proof is very similar to \cite[Lemma 4.4]{KT2}. 
If we define a Lipschitz function $\varphi$ on $[0, 1]$ by 
\[
\varphi (t) := d(\mu_{1} (at), \mu_{2} (ct)),
\]
then the function $\cD(t)$ is equal to $\Theta (at, \varphi (t), ct)$. 
Hence $\cD (t)$ is locally Lipschitz on $(0, 1]$ by Lemma \ref{lem4.3}. 
From Dini's theorem \cite{D} (cf.\,\cite[Section 2.3]{H}, \cite[Theorem 7.29]{WZ}), 
the function $\cD (t)$ is differentiable for almost all $t \in (0, 1)$. 
Let $t_{0} \in (0, 1)$ be any number where $\cD (t)$ is differentiable. 
Then, by the assumption, we have 
an open triangle 
\[
{\rm OT}(\partial \wt{X}, \tilde{\mu}_{1}^{(t_{0})}(at_{0}), \tilde{\mu}_{2}^{(t_{0})}(ct_{0})) 
= 
(\partial \wt{X}, \tilde{\mu}_{1}^{(t_{0})}(at_{0}), \tilde{\mu}_{2}^{(t_{0})}(ct_{0})\,;\,
\tilde{\gamma}_{t_{0}}, \tilde{\mu}_{1}^{(t_{0})}|_{[0, \,at_{0}]}, \tilde{\mu}_{2}^{(t_{0})}|_{[0, \,ct_{0}]})
\]
in $\wt{X}$ corresponding to the triangle 
\[
{\rm OT}(\partial X, \mu_{1}(at_{0}), \mu_{2}(ct_{0})) 
= 
(\partial X, \mu_{1}(at_{0}), \mu_{2}(ct_{0})\,;\,
\gamma_{t_{0}}, \mu_{1}|_{[0, \,at_{0}]}, \mu_{2}|_{[0, \,ct_{0}]})
\]
in $X$ such that (\ref{lem4.4-length1}),  (\ref{lem4.4-length2}), and (\ref{lem4.4-angle}) hold. 
Since ${\rm OT}(\partial X, \mu_{1}(at_{0}), \mu_{2}(ct_{0})) $ is non-degenerate, 
we may assume, without loss of generality, that 
\[
0 = \tilde{y} (\tilde{\mu}_{1}^{(t_{0})}(at_{0})) < \tilde{y} (\tilde{\mu}_{2}^{(t_{0})}(ct_{0}) ). 
\]
Let $\tilde{\mu}, \tilde{\eta} : [0, \infty) \rightarrow
 \wt{X}$ be $\partial \wt{X}$-rays passing through 
$\tilde{\mu}_{1}^{(t_{0})}(at_{0}) = \tilde{\mu} (at_{0})$, 
$\tilde{\mu}_{2}^{(t_{0})}(ct_{0}) = \tilde{\eta}(ct_{0})$, respectively. 
We define a function 
\[
\wt{\psi}(t) := d(\tilde{\mu}(at), \tilde{\eta}(ct)).
\]
Since $\tilde{\gamma}_{t_{0}}$ is unique, 
we may prove that the function $\wt{\psi}(t)$
is differentiable at $t = t_{0}$, and that 
\begin{equation}\label{lem4.4-1}
\wt{\psi}'(t_{0}) 
= 
\cos (\angle\,\tilde{\mu}_{1}^{(t_{0})}(at_{0})) + \cos (\angle\, \tilde{\mu}_{2}^{(t_{0})}(ct_{0})).
\end{equation}
Indeed, let $\tilde{z}_{0}$ and  $\tilde{z}_{t}$ denote the midpoints of $\tilde{\gamma}_{t_{0}}$ 
and $\tilde{\mu}(at)\tilde{\eta}(ct)$, respectively. 
Here, $\tilde{\mu}(at)\tilde{\eta}(ct)$ denotes a minimal geodesic segment 
joining $\tilde{\mu}(at)$ to $\tilde{\eta}(ct)$.
Since there exists a unique minimal geodesic segment 
joining $\tilde{\mu}_{1}^{(t_{0})}(at_{0}) = \tilde{\mu} (at_{0})$ to 
$\tilde{\mu}_{2}^{(t_{0})}(ct_{0}) = \tilde{\eta}(ct_{0})$, 
\begin{equation}\label{lem4.4-09-01-14}
\lim_{t \to t_{0}} \tilde{z}_{t} = \tilde{z}_{0}
\end{equation}
holds. 
By the triangle inequality, we have 
\[
\wt{\psi}(t) - \wt{\psi}(t_{0}) \le d(\tilde{\mu}(at), \tilde{z}_{0}) + d(\tilde{\eta}(ct), \tilde{z}_{0}) 
- d(\tilde{\mu}(at_{0}), \tilde{z}_{0}) - d(\tilde{\eta}(ct_{0}), \tilde{z}_{0})
\]
and 
\[
\wt{\psi}(t) - \wt{\psi}(t_{0}) \ge d(\tilde{\mu}(at), \tilde{z}_{t}) + d(\tilde{\eta}(ct), \tilde{z}_{t}) 
- d(\tilde{\mu}(at_{0}), \tilde{z}_{t}) - d(\tilde{\eta}(ct_{0}), \tilde{z}_{t}).
\]
Hence, 
\begin{align}\label{lem4.4-1.2new}
&\limsup_{t \downarrow t_{0}} \frac{\wt{\psi}(t) - \wt{\psi}(t_{0})}{ t - t_{0}}\\[2mm]
&\le 
\limsup_{t \downarrow t_{0}} 
\frac{d(\tilde{\mu}(at), \tilde{z}_{0}) - d(\tilde{\mu}(at_{0}), \tilde{z}_{0}) }{ t - t_{0} } 
+ 
\limsup_{t \downarrow t_{0}} 
\frac{d(\tilde{\eta}(ct), \tilde{z}_{0}) - d(\tilde{\eta}(ct_{0}), \tilde{z}_{0}) }{ t - t_{0} }\notag
\end{align}
and 
\begin{align}\label{lem4.4-1.3new}
&\liminf_{t \downarrow t_{0}} \frac{\wt{\psi}(t) - \wt{\psi}(t_{0})}{ t - t_{0}}\\[2mm]
&\ge 
\liminf_{t \downarrow t_{0}} 
\frac{d(\tilde{\mu}(at), \tilde{z}_{t}) - d(\tilde{\mu}(at_{0}), \tilde{z}_{t}) }{ t - t_{0} } 
+ 
\liminf_{t \downarrow t_{0}} 
\frac{d(\tilde{\eta}(ct), \tilde{z}_{t}) - d(\tilde{\eta}(ct_{0}), \tilde{z}_{t}) }{ t - t_{0} }\notag
\end{align}
hold. 
From the first variation formula, we have 
\begin{equation}\label{lem4.4-1.4new}
\limsup_{t \downarrow t_{0}} 
\frac{d(\tilde{\mu}(at), \tilde{z}_{0}) - d(\tilde{\mu}(at_{0}), \tilde{z}_{0}) }{ t - t_{0} } 
= \cos (\angle\,\tilde{\mu}_{1}^{(t_{0})}(at_{0}))
\end{equation}
and 
\begin{equation}\label{lem4.4-1.5new}
\limsup_{t \downarrow t_{0}} 
\frac{d(\tilde{\eta}(ct), \tilde{z}_{0}) - d(\tilde{\eta}(ct_{0}), \tilde{z}_{0}) }{ t - t_{0} } 
= \cos (\angle\, \tilde{\mu}_{2}^{(t_{0})}(ct_{0})).
\end{equation}
By imitating the proof of \cite[Lemma 2.1]{IT3}, we obtain 
\begin{equation}\label{lem4.4-1.6new}
\liminf_{t \downarrow t_{0}} 
\frac{d(\tilde{\mu}(at), \tilde{z}_{t}) - d(\tilde{\mu}(at_{0}), \tilde{z}_{t}) }{ t - t_{0} } 
=  \cos (\angle\,\tilde{\mu}_{1}^{(t_{0})}(at_{0}))
\end{equation}
and 
\begin{equation}\label{lem4.4-1.7new}
\liminf_{t \downarrow t_{0}} 
\frac{d(\tilde{\eta}(ct), \tilde{z}_{t}) - d(\tilde{\eta}(ct_{0}), \tilde{z}_{t}) }{ t - t_{0} } 
=  \cos (\angle\, \tilde{\mu}_{2}^{(t_{0})}(ct_{0})).
\end{equation}
In the above equations, notice (\ref{lem4.4-09-01-14}). 
Combining (\ref{lem4.4-1.2new}), (\ref{lem4.4-1.3new}), (\ref{lem4.4-1.4new}), 
(\ref{lem4.4-1.5new}), (\ref{lem4.4-1.6new}), and (\ref{lem4.4-1.7new}), 
we have 
\begin{equation}\label{lem4.4-1.8new}
\lim_{t \downarrow t_{0}} \frac{\wt{\psi}(t) - \wt{\psi}(t_{0})}{ t - t_{0}} 
= 
\cos (\angle\,\tilde{\mu}_{1}^{(t_{0})}(at_{0})) + \cos (\angle\, \tilde{\mu}_{2}^{(t_{0})}(ct_{0})).
\end{equation}
By the same way, we also see 
\begin{equation}\label{lem4.4-1.9new}
\lim_{t \uparrow t_{0}} \frac{\wt{\psi}(t) - \wt{\psi}(t_{0})}{ t - t_{0}} 
= 
\cos (\angle\,\tilde{\mu}_{1}^{(t_{0})}(at_{0})) + \cos (\angle\, \tilde{\mu}_{2}^{(t_{0})}(ct_{0})).
\end{equation}
From (\ref{lem4.4-1.8new}) and (\ref{lem4.4-1.9new}), 
we hence get (\ref{lem4.4-1}). 
As well as above, since $\varphi$ is differentiable at $t = t_{0}$, we also get 
\begin{equation}\label{lem4.4-2}
\varphi'(t_{0}) 
= 
\cos (\angle\,\mu_{1}(at_{0})) + \cos (\angle\, \mu_{2}(ct_{0})).
\end{equation}
By (\ref{lem4.4-angle}), (\ref{lem4.4-1}), and (\ref{lem4.4-2}), we get 
$\varphi'(t_{0}) \le \wt{\psi}'(t_{0})$. 
Hence, we conclude that $\cD'(t_{0}) \le 0$ (see the proof of  \cite[Lemma 4.4]{KT2}). 
Thus, $\cD'(t) \le 0$ for almost all $t \in (0, 1)$. 
This implies that $\cD(t)$ is non-increasing, since $\cD (t)$ is locally Lipschitz.
$\qedd$
\end{proof}

\medskip

\begin{remark}
As pointed out in \cite[Remark 4.5]{KT2}, it is a very important property that $\cD (t)$ is locally Lipschitz. 
Without this property, we can not conclude that $\cD(t)$ is non-increasing.
\end{remark}

\section{Toponogov's comparison theorem}\label{sec:TCTproof}
Our purpose of this section is to prove our main theorem, i.e., 
the Toponogov comparison theorem for open triangles (Theorem \ref{thm4.9}), 
by using new techniques established in \cite[Section 4]{KT2} and 
Lemmas \ref{lem4.1}, \ref{lem3.8}, and \ref{lem4.4}. 

\bigskip

Throughout this section, let $(\wt{X}, \partial \wt{X})$ denote 
a model surface with its metric (\ref{model-metric}). 

\begin{lemma}\label{lem4.6}
Let 
\[
{\rm OT}(\partial \wt{X}, \tilde{p}_{1}, \tilde{q}_{1}) 
= 
(\partial \wt{X}, \tilde{p}_{1}, \tilde{q}_{1})
\,;\;\tilde{\gamma}_{1}, \tilde{\mu}^{(1)}_{1}, \tilde{\mu}^{(1)}_{2})
\] and 
\[
{\rm OT}(\partial \wt{X}, \tilde{p}_{2}, \tilde{q}_{2}) 
= 
(\partial \wt{X}, \tilde{p}_{2}, \tilde{q}_{2}\,;\,
\tilde{\gamma}_{2}, \tilde{\mu}^{(2)}_{1}, \tilde{\mu}^{(2)}_{2})
\]
be open triangles in $\wt{X}$ such that 
\begin{equation}\label{lem4.6-length1}
d(\partial \wt{X}, \tilde{q}_{1}) = d(\partial \wt{X}, \tilde{p}_{2}),
\end{equation}
and that 
\begin{equation}\label{lem4.6-angle1}
\angle\,\tilde{q}_{1} + \angle\,\tilde{p}_{2} \le \pi.
\end{equation}
If there exists an open triangle 
${\rm OT}(\partial \wt{X}, \tilde{p}, \tilde{q}) 
= 
(\partial \wt{X}, \tilde{p}, \tilde{q}\,;\,\tilde{\gamma}, \tilde{\mu}_{1}, \tilde{\mu}_{2})
$ 
in a sector $\wt{X}(\theta_{0})$, which has no pair of cut points, satisfying 
\begin{equation}\label{lem4.6-length2}
d(\partial \wt{X}, \tilde{p}) = d(\partial \wt{X}, \tilde{p}_{1}), \quad 
d(\tilde{p}, \tilde{q}) = d(\tilde{p}_{1}, \tilde{q}_{1}) + d(\tilde{p}_{2}, \tilde{q}_{2}), \quad 
d(\partial \wt{X}, \tilde{q}) = d(\partial \wt{X}, \tilde{q}_{2}),
\end{equation}
then
\[
\angle\,\tilde{p}_{1} \ge \angle\,\tilde{p}, \quad \angle\,\tilde{q}_{2} \ge \angle\,\tilde{q}.
\]
\end{lemma}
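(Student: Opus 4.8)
The plan is to adapt the proof of Lemma \ref{lem3.2}, with the hypothesis that $\wt X(\theta_0)$ has no pair of cut points taking over the role played there by the injectivity radius bound $d(\tilde p_1,\tilde q_1)+d(\tilde p_2,\tilde q_2)<\inj(\tilde p_1)$. The extra input supplied by the no-cut-point hypothesis is a \emph{uniqueness} statement: inside $\wt X(\theta_0)$ any two points are joined by a unique minimal geodesic, so by Lemma \ref{lem4.1} an open triangle lying in $\wt X(\theta_0)$ with prescribed boundary distances at its two vertices and prescribed opposite-side length is determined up to an isometry, \emph{including} its opposite side, hence so are both of its vertex angles. This is what will let us pin down $\angle\,\tilde p$ and $\angle\,\tilde q$ simultaneously, rather than only one of them as in Lemma \ref{lem3.2}.

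First I would glue the two given triangles exactly as in the proof of Lemma \ref{lem3.2}: using (\ref{lem4.6-length1}), identify the common $\partial\wt X$-segment $\tilde{\mu}^{(1)}_{2}=\tilde{\mu}^{(2)}_{1}$, place the triangles side by side with $\tilde y(\tilde p_1)<\tilde y(\tilde q_1)=\tilde y(\tilde p_2)\le\tilde y(\tilde q_2)$, and form the broken geodesic $\tilde\gamma_1\cup\tilde\gamma_2$ from $\tilde p_1$ to $\tilde q_2$; set $a:=d(\partial\wt X,\tilde p_1)$, $b:=d(\tilde p_1,\tilde q_1)+d(\tilde p_2,\tilde q_2)$, $c:=d(\partial\wt X,\tilde q_2)$. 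If $\angle\,\tilde q_1+\angle\,\tilde p_2=\pi$, the broken geodesic is smooth, hence a geodesic, so the glued figure is itself an open triangle with data $(a,b,c)$; by the uniqueness above it is isometric to $(\partial\wt X,\tilde p,\tilde q)$ and the conclusion holds with equality. So I may assume $\angle\,\tilde q_1+\angle\,\tilde p_2<\pi$. I would next observe that the configuration is symmetric under interchanging the roles of $(\tilde p_1,\tilde q_1)$ and $(\tilde q_2,\tilde p_2)$: hypotheses (\ref{lem4.6-length1}), (\ref{lem4.6-angle1}) and (\ref{lem4.6-length2}) are invariant once $(\partial\wt X,\tilde p,\tilde q)$ is replaced by $(\partial\wt X,\tilde q,\tilde p)$. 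Hence it suffices to prove $\angle\,\tilde p_1\ge\angle\,\tilde p$; the inequality $\angle\,\tilde q_2\ge\angle\,\tilde q$ then follows by applying the same statement to the interchanged configuration.

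To prove $\angle\,\tilde p_1\ge\angle\,\tilde p$ I would run the construction from the proof of Lemma \ref{lem3.2}, working throughout inside $\wt X(\theta_0)$: place $\tilde p_1$ at the vertex $\tilde p$ of the given triangle (so $\tilde{\mu}^{(1)}_{1}$ is the $\partial\wt X$-segment $\tilde\mu_1$ to $\tilde p$), introduce geodesic polar coordinates $(r,\theta)$ about $\tilde p_1$ with $\theta=0$ along $\tilde{\mu}^{(1)}_{1}$ and $0<\theta(\tilde q_2)\le\theta(\tilde q_1)$, and set $\cA(\tilde p_1):=\theta^{-1}\bigl(0,\theta(\tilde q_1)\bigr)$; by the no-cut-point hypothesis the minimal geodesic from $\tilde p_1$ to any point of $\cA(\tilde p_1)$ is unique. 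Since $\angle\,\tilde q_1+\angle\,\tilde p_2<\pi$ we get $\tilde q_2\in\cA(\tilde p_1)$, and along the arc $\tilde\tau$ of $\tilde x=c$ through $\tilde q_2$ one has $d(\tilde p_1,\tilde\tau(s))\to\infty$ as $s\to\infty$; the intermediate value theorem then yields $s_1>\tilde y(\tilde q_2)$ with $d(\tilde p_1,\tilde\tau(s_1))=d(\tilde p_1,\tilde q_1)+d(\tilde q_1,\tilde q_2)=b$, and the argument of Lemma \ref{lem3.2} --- using the monotonicity statements of Lemma \ref{lem4.1} applied to $\tilde\tau$ and to the geodesic extension of $\tilde\gamma_1$ --- shows the subarc $\tilde{\tau}|_{[\tilde y(\tilde q_2),\,s_1]}$ stays in $\cA(\tilde p_1)$. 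Consequently the minimal geodesic from $\tilde p_1$ to $\tilde\tau(s_1)$ lies in $\overline{\cA(\tilde p_1)}$, so the open triangle $(\partial\wt X,\tilde p_1,\tilde\tau(s_1))$ has data $(a,b,c)$ and vertex angle at $\tilde p_1$ no larger than $\angle\,\tilde p_1$. Since this triangle and $(\partial\wt X,\tilde p,\tilde q)$ both have data $(a,b,c)$ and both lie in $\wt X(\theta_0)$, the uniqueness statement identifies them up to an isometry; hence $\angle\,\tilde p$, being the vertex angle at $\tilde p_1$ of $(\partial\wt X,\tilde p_1,\tilde\tau(s_1))$, is at most $\angle\,\tilde p_1$.

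The point requiring the most care is exactly the bookkeeping that keeps the construction inside $\wt X(\theta_0)$. In Lemma \ref{lem3.2} the wedge $\cA(\tilde p_1)$ sat safely inside a metric ball $B_a(\tilde p_1)$ of radius below $\inj(\tilde p_1)$; here one must instead verify that the relevant wedge and the subarc of $\tilde\tau$ can be taken within the sector $\wt X(\theta_0)$ --- equivalently, that the constructed triangle $(\partial\wt X,\tilde p_1,\tilde\tau(s_1))$ may be realized inside the sector --- so that the hypothesis that $\wt X(\theta_0)$ has no pair of cut points is applicable in place of the injectivity radius bound. This is what legitimizes both the uniqueness of minimal geodesics exploited in the construction and the final identification of the two triangles.
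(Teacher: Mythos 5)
Your approach is essentially the same as the paper's: glue the two triangles along the common $\partial\wt X$-segment, use the geodesic extension $\tilde\sigma$ of $\tilde\gamma_1$ together with Lemma~\ref{lem4.1} to constrain where the opposite side of the big triangle can lie, and thereby conclude $\angle\,\tilde p_1 \ge \angle\,\tilde p$; the second inequality follows by re-running the argument on the reversed configuration, exactly as you observe. The only real stylistic difference is that you re-run the Lemma~\ref{lem3.2} construction (polar coordinates at $\tilde p_1$, the wedge $\cA(\tilde p_1)$, the arc $\tilde\tau$, and the comparison triangle $(\partial\wt X,\tilde p_1,\tilde\tau(s_1))$) and then invoke a uniqueness statement to identify that triangle with $(\partial\wt X,\tilde p,\tilde q)$, whereas the paper argues directly that $\tilde\gamma$ lies in a domain $\wt{\cA}(\theta_0)$ bounded by $\tilde\mu_1$ and $\tilde\sigma$ (cut off along $\partial\wt X$ or the ray $\tilde y=\theta_0$). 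Both routes work, and the uniqueness step you use is legitimate under the no-pair-of-cut-points hypothesis combined with Lemma~\ref{lem4.1}.

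The one point you flag but do not actually carry out --- keeping the wedge, the subarc of $\tilde\tau$, and the constructed opposite side inside $\wt X(\theta_0)$ --- is precisely what the paper resolves with the short intermediate chain
\[
d(\tilde p_1,\tilde q_2) < d(\tilde p,\tilde q) = d(\tilde p_1,\tilde q)
\quad\Longrightarrow\quad
\tilde y(\tilde q_2) < \tilde y(\tilde q) < \theta_0,
\]
obtained from the triangle inequality, (\ref{lem4.6-length2}), and Lemma~\ref{lem4.1}. This is the step at which the sector hypothesis replaces the injectivity-radius bound of Lemma~\ref{lem3.2}: it forces $\tilde\gamma_2$ and the relevant subarc of $\tilde\tau$ to lie in $\wt X(\theta_0)$, so that the no-pair-of-cut-points hypothesis is applicable both for showing $\tilde\sigma$ does not re-enter the wedge and for the final identification of the two triangles. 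If you insert this verification where you currently say ``the point requiring the most care,'' your proof closes and agrees with the paper's.
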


\begin{proof}
By (\ref{lem4.6-length1}), we may assume that ${\rm OT}(\partial \wt{X}, \tilde{p}_{2}, \tilde{q}_{2})$ 
is adjacent to ${\rm OT}(\partial \wt{X}, \tilde{p}_{1}, \tilde{q}_{1})$ so as to have a common side 
$\tilde{\mu}^{(1)}_{2} = \tilde{\mu}^{(2)}_{1}$, i.e., $\tilde{q}_{1} = \tilde{p}_{2}$. 
We may also assume that 
\[
0 = \tilde{y}(\tilde{p}_{1}) < \tilde{y}(\tilde{q}_{1}) = \tilde{y}(\tilde{p}_{2}) < \tilde{y}(\tilde{q}_{2}).
\]
Furthermore, we may assume that $\tilde{p} = \tilde{p}_{1}$ and $\tilde{y} (\tilde{q}) > 0$.
Remark that $\tilde{\mu}_{1} = \tilde{\mu}^{(1)}_{1}$. 
If $\angle\,\tilde{q}_{1} + \angle\,\tilde{p}_{2} = \pi$ holds, 
then there is nothing to prove. 
Thus, by (\ref{lem4.6-angle1}), we may assume that 
$\angle\,\tilde{q}_{1} + \angle\,\tilde{p}_{2} < \pi$. 
Hence, from the triangle inequality and (\ref{lem4.6-length2}), 
we see 
\begin{equation}\label{lem4.6-1}
d(\tilde{p}_{1}, \tilde{q}_{2}) < d(\tilde{p}, \tilde{q}) =  d(\tilde{p}_{1}, \tilde{q}).
\end{equation}
Since $\tilde{q} \in \wt{X} (\theta_{0})$, 
it follows from Lemma \ref{lem4.1} and (\ref{lem4.6-1}) that 
\[
\tilde{y}(\tilde{q}_{2}) < \tilde{y}(\tilde{q}) < \theta_{0}.
\]
Thus, $\tilde{\gamma}_{2}$ lies in $\wt{X} (\theta_{0})$. 
Since $\wt{X}(\theta_{0})$ has no pair of cut points, 
the geodesic extension $\tilde{\sigma}$ of $\tilde{\gamma}_{1}$ does not 
intersect the side $\tilde{\gamma}_{2}$ except for $\tilde{q}_{1}$. 
We will prove that $\tilde{\sigma}$ does not intersect 
$\tilde{\tau} ([\tilde{y}(\tilde{q}_{2}), \tilde{y}(\tilde{q})])$, 
where $\tilde{\tau}$ 
denotes $\tilde{\tau} (t) := (\tilde{x} (\tilde{q}_{2}), t) \in \wt{X}$. 
Suppose that $\tilde{\sigma}$ intersect $\tilde{\tau} ([\tilde{y}(\tilde{q}_{2}), \tilde{y}(\tilde{q})])$ 
at a point $\tilde{\sigma} (s_{0})$. 
From Lemma \ref{lem4.1}, 
we have 
\begin{equation}\label{lem4.6-2}
d(\tilde{q}_{1}, \tilde{q}_{2}) < d(\tilde{q}_{1}, \tilde{\sigma} (s_{0})).
\end{equation}
Notice that $\tilde{\sigma}(s_{0}) \not= \tilde{q}_{2}$, 
since $\tilde{\gamma}_{2}$ does not meet $\tilde{\sigma}$ except for $\tilde{q}_{1}$. 
Thus, by (\ref{lem4.6-length2}) and (\ref{lem4.6-2}), 
\[
d(\tilde{p}_{1}, \tilde{q}) 
< d(\tilde{p}_{1}, \tilde{q}_{1}) + d(\tilde{q}_{1}, \tilde{\sigma} (s_{0})) 
= d(\tilde{p}_{1}, \tilde{\sigma} (s_{0})).
\]
Hence, by applying Lemma \ref{lem4.1} again, we get 
$\tilde{y}(\tilde{q}) < \tilde{y}(\tilde{\sigma} (s_{0}))$. 
This is impossible, 
since 
$\tilde{\sigma}(s_{0}) \in \tilde{\tau} ((\tilde{y}(\tilde{q}_{2}), \tilde{y}(\tilde{q})])$. 
Therefore, we have proved that $\tilde{\sigma}$ does not intersect 
$\tilde{\tau} ([\tilde{y}(\tilde{q}_{2}), \tilde{y}(\tilde{q})])$.\par
If the extension $\tilde{\sigma}$ intersects $\partial \wt{X}$ at a point $\tilde{\sigma} (s_{1})$ 
in $\wt{X} (\theta_{0})$, 
then we denote by $\wt{\cA} (\theta_{0})$ the domain bounded 
by $\tilde{\mu}_{1}$ and $\tilde{\sigma} ([0, s_{1}])$. 
If $\tilde{\sigma}$ does not intersect $\partial \wt{X}$ in $\wt{X} (\theta_{0})$, 
then $\tilde{\sigma}$ intersects the $\partial \wt{X}$-ray $\tilde{y} = \theta_{0}$ 
at a point $\tilde{\sigma} (s_{2})$. 
In this case, $\wt{\cA} (\theta_{0})$ denotes the domain bounded by $\tilde{\mu}_{1}$, 
$\tilde{\sigma} ([0, s_{2}])$, and the $\partial \wt{X}$-segment to $\tilde{\sigma} (s_{2})$. 
By the argument above, 
the point $\tilde{q}$ lies in the domain $\wt{\cA} (\theta_{0})$. 
Hence, the opposite side $\tilde{\gamma}$ of ${\rm OT}(\partial \wt{X}, \tilde{p}, \tilde{q})$ to 
$\partial \wt{X}$ must lie in the closure of $\wt{\cA} (\theta_{0})$, since $\wt{X} (\theta_{0})$ has no pair of cut points. 
In particular, $\angle\,\tilde{p}_{1} \ge \angle\,\tilde{p}$ is now clear. 
By repeating the same argument above for the pair of open triangles 
${\rm OT}(\partial \wt{X}, \tilde{q}_{2}, \tilde{p}_{2})$ and 
${\rm OT}(\partial \wt{X}, \tilde{q}_{1}, \tilde{p}_{1})$, 
we also get $\angle\,\tilde{q}_{2} \ge \angle\,\tilde{q}$. $\qedd$
\end{proof}

\bigskip

From now on, we denote by $(X, \partial X)$ 
a complete connected Riemannian $n$-dimensional manifold $X$ with smooth 
{\bf convex} boundary $\partial X$ whose radial curvature is bounded from below by that 
of $(\wt{X}, \partial \wt{X})$. 

\begin{lemma}\label{lem4.7}
If an open triangle ${\rm OT}(\partial X, p, q) = (\partial X, p, q\,;\,\gamma, \mu_{1}, \mu_{2})$ in $X$ 
admits an open triangle 
${\rm OT}(\partial \wt{X}, \tilde{p}, \tilde{q}) 
= 
(\partial \wt{X}, \tilde{p}, \tilde{q}\,;\,\tilde{\gamma}, \tilde{\mu}_{1}, \tilde{\mu}_{2})
$ 
in a sector $\wt{X}(\theta_{0})$ satisfying 
\begin{equation}\label{lem4.7-0}
d(\partial \wt{X}, \tilde{p}) = d(\partial X, p), \quad 
d(\tilde{p}, \tilde{q}) = d(p, q), \quad 
d(\partial \wt{X}, \tilde{q}) = d(\partial X, q),
\end{equation}
then, for any $s \in (0, d(p, q))$, there exists an open triangle 
${\rm OT}(\partial \wt{X}, \tilde{p}, \tilde{\gamma}(s))$ in $\wt{X}(\theta_{0})$ 
satisfying $(\ref{lem4.7-0})$ for $q = \gamma (s)$ and $\tilde{q} = \tilde{\gamma}(s)$. 
\end{lemma}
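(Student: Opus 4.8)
The plan is to realise the ``initial'' subtriangle $(\partial X,p,\gamma(s))$ by a comparison triangle that still lies inside the sector $\wt{X}(\theta_0)$; here $\sigma:=\gamma$ (so $\sigma(s)=\gamma(s)$), and $\tilde\sigma(s)$ — written $\tilde\gamma(s)$ in the statement — denotes the far endpoint of the opposite side of that model triangle. Write $L:=d(p,q)$, $a:=d(\partial X,p)$, $c:=d(\partial X,q)$, and $b_s:=d(\partial X,\gamma(s))$. Choosing a $\partial X$-segment $\nu_s$ to $\gamma(s)$, the triples $(\partial X,p,\gamma(s))=(\gamma|_{[0,s]},\mu_1,\nu_s)$ and $(\partial X,\gamma(s),q)=(\gamma|_{[s,L]},\nu_s,\mu_2)$ are open triangles in $X$: their opposite sides are minimal and, by Lemma \ref{lem3.10}, miss $\partial X$, so $b_s>0$. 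The triangle inequality for $d(\partial X,\cdot)$ gives $|a-b_s|\le s$, with strict inequality unless $\angle p\in\{0,\pi\}$; since $(\partial X,p,q)$ admits the open triangle $(\partial\wt{X},\tilde p,\tilde q)$ in $\wt{X}(\theta_0)$ it is nondegenerate, so $(a,s,b_s)\in T$. Hence, by the discussion preceding Lemma \ref{lem4.3} (which rests on Lemma \ref{lem4.1}), there is an open triangle in $\wt{X}$ of side lengths $(a,s,b_s)$, unique up to isometry apart from its opposite side, with angular width $\Theta(a,s,b_s)$.

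Now I would reduce the lemma to the single inequality $\Theta(a,s,b_s)\le\Theta(a,L,c)$. Indeed, $\tilde y$-translations are isometries of $\wt{X}$, every $\partial\wt{X}$-segment is an arc $\tilde y\equiv\mathrm{const}$, and an opposite side, being a minimal geodesic, has monotone $\tilde y$-coordinate confined between the $\tilde y$-values of its endpoints (Clairaut; Lemma \ref{lem4.1}); so the model triangle of side lengths $(a,s,b_s)$ can be translated so that its $\tilde p$-vertex becomes the given $\tilde p$ and it lies entirely in $\wt{X}(\theta_0)=\tilde y^{-1}((0,\theta_0))$ as soon as its angular width does not exceed $\Theta(a,L,c)$ — the latter being the $\tilde y$-gap between $\tilde\mu_1(0)$ and $\tilde\mu_2(0)$, hence $<\theta_0$. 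Granting the inequality, the translated triangle is the required $(\partial\wt{X},\tilde p,\tilde\sigma(s))$, and (\ref{lem4.7-0}) holds for $q=\sigma(s)$, $\tilde q=\tilde\sigma(s)$ by construction.

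To prove $\Theta(a,s,b_s)\le\Theta(a,L,c)$, fix a subdivision $0=s_0<s_1<\dots<s_m=L$ of $[0,L]$ through $s$, fine enough that each $(\partial X,\gamma(s_k),\gamma(s_{k+1}))$ is a thin open triangle. By Lemma \ref{lem3.8} attach to each a comparison open triangle in $\wt{X}$ whose two angles do not exceed the corresponding angles in $X$. The angles of $\gamma$ at every interior $\gamma(s_k)$ sum to $\pi$, so consecutive model pieces have angle sum $\le\pi$ at their shared vertex and may be glued one after another by Lemma \ref{lem3.2}, just as in the proof of Lemma \ref{lem3.5}. After gluing the pieces over $[0,s_k]$ one has an open triangle of side lengths $(a,s_k,b_{s_k})$, hence of angular width $\Theta(a,s_k,b_{s_k})$; and the straightening step in Lemma \ref{lem3.2} pushes the foot of the outermost $\partial\wt{X}$-segment strictly farther in the $\tilde y$-direction, so this angular width does not decrease when the next piece is appended. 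Comparing the stage $s_k=s$ with the final stage $s_m=L$ (where $b_L=c$) yields $\Theta(a,s,b_s)\le\Theta(a,L,c)$.

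The hard part is this last step. First, one must make sure the repeated applications of Lemma \ref{lem3.2} are admissible: this is exactly where its injectivity-radius hypothesis is used, and where one needs the relevant portion of $\wt{X}$ to carry no conjugate points — which is the reason for assuming a sector with no pair of cut points. Second, one must extract explicitly from the construction of Lemma \ref{lem3.2} the fact that each glue-and-straighten step enlarges the $\tilde y$-spread of the feet, i.e.\ that straightening a two-piece model configuration of fixed total length can only increase its angular width. The degenerate possibilities $\angle p\in\{0,\pi\}$ and $|a-b_s|=s$ are dispatched by an easy limiting argument before the model triangles are invoked.
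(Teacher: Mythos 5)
Your proof takes a substantially heavier route than the paper intends, and as you yourself partly acknowledge, it leaves real gaps. The paper treats Lemma \ref{lem4.7} as immediate, and the argument it has in mind is the same elementary intermediate-value argument that opens the proof of Proposition \ref{lem4.8}, applied to the parallel $\tilde x=b_s$ in place of $\tilde x=c$: with $a:=d(\partial X,p)$, $L:=d(p,q)$, $c:=d(\partial X,q)$, $b_s:=d(\partial X,\gamma(s))$ and (say) $0<\tilde y(\tilde p)<\tilde y(\tilde q)<\theta_0$, one seeks $\tilde q_s$ on $\tilde\tau_{b_s}$ with $d(\tilde p,\tilde q_s)=s$ and $\tilde y(\tilde q_s)\in[\tilde y(\tilde p),\tilde y(\tilde q)]$. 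At the left endpoint the meridian distance is $|a-b_s|\le s$; at the right endpoint the triangle inequality in $\wt X$ together with $|c-b_s|\le L-s$ (triangle inequality in $X$ along $\gamma|_{[s,L]}$) gives $d(\tilde p,\tilde\tau_{b_s}(\tilde y(\tilde q)))\ge d(\tilde p,\tilde q)-|c-b_s|\ge s$. By Lemma \ref{lem4.1} the function $d(\tilde p,\tilde\tau_{b_s}(\cdot))$ is strictly increasing, so the unique $\tilde q_s$ with $d(\tilde p,\tilde q_s)=s$ has $\tilde y$-coordinate between $\tilde y(\tilde p)$ and $\tilde y(\tilde q)$. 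All three $\tilde y$-coordinates then lie in $(0,\theta_0)$, the opposite side from $\tilde p$ to $\tilde q_s$ has monotone $\tilde y$ (Clairaut) and is unique because $\wt X(\theta_0)$ has no pair of cut points, so the whole open triangle sits in $\wt X(\theta_0)$. No subdivision, no thin triangles, no gluing.

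Your route, by contrast, reproves in effect the Alexandrov convexity of Lemma \ref{lem4.4} (which comes later and is proved differently), and the two issues you flag at the end are genuine obstructions, not loose ends. First, the hypothesis of Lemma \ref{lem3.2} requires $d(\tilde p_1,\tilde q_1)+d(\tilde p_2,\tilde q_2)<\inj(\tilde p_1)$; the injectivity radius is a quantity of the full $\wt X$, and the assumption that $\wt X(\theta_0)$ has no pair of cut points does \emph{not} give a lower bound on $\inj(\tilde p_1)$ for a point $\tilde p_1\in\wt X(\theta_0)$, since the cut locus of $\tilde p_1$ may lie outside the sector. The sector-adapted gluing statement, Lemma \ref{lem4.6}, avoids this but instead assumes the straightened triangle already lies in $\wt X(\theta_0)$ --- precisely what your monotonicity claim $\Theta(a,s,b_s)\le\Theta(a,L,c)$ is supposed to deliver --- so you would need to interleave a continuity/``$\sup S$'' argument as in Lemma \ref{lem3.5} or Proposition \ref{lem4.8}. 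Second, to apply the gluing inductively you need the angle-sum condition at each shared vertex, which requires controlling the angle at \emph{both} ends of the accumulated straightened triangle; Lemma \ref{lem3.2} controls only one end, and the double application with reversed roles used in the proof of Lemma \ref{lem3.5} must be carried along the induction, which your sketch omits. None of this machinery is necessary for the present lemma.
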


\begin{proof}
It is clear from Lemmas \ref{lem3.8} and \ref{lem4.6}. 
See also the proof of \cite[Lemma 4.9]{KT2}.
$\qedd$
\end{proof}

\begin{proposition}\label{lem4.8}
Let ${\rm OT}(\partial X, p, q) = (\partial X, p, q\,;\,\gamma, \mu_{1}, \mu_{2})$ 
be an open triangle in $X$. 
Then, there exists an open triangle 
${\rm OT}(\partial \wt{X}, \tilde{p}, \tilde{q}) = 
(\partial \wt{X}, \tilde{p}, \tilde{q}\,;\,\wt{\gamma}, \wt{\mu}_{1}, \wt{\mu}_{2})$ 
in $\wt{X}$ satisfying 
\begin{equation}\label{lem4.8-length}
d(\partial \wt{X}, \tilde{p}) = d(\partial X, p), \quad 
d(\tilde{p}, \tilde{q}) = d(p, q), \quad 
d(\partial \wt{X}, \tilde{q}) = d(\partial X, q).
\end{equation}
Furthermore, if the ${\rm OT}(\partial \wt{X}, \tilde{p}, \tilde{q})$ lies in a sector $\wt{X}(\theta_{0})$, 
which has no pair of cut points, then 
\begin{equation}\label{lem4.8-angle}
\angle\,p \ge \angle\,\tilde{p}, \quad \angle\,q \ge \angle\,\tilde{q}.
\end{equation}
\end{proposition}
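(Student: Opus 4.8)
The plan is to prove the two assertions separately. For the \emph{existence} of a model triangle realizing the side lengths, put $a:=d(\partial X,p)$, $b:=d(p,q)$, $c:=d(\partial X,q)$; then $a,c>0$ since $p,q\notin\partial X$, and $|a-c|\le b$ by the triangle inequality for $d(\partial X,\cdot\,)$. If $|a-c|<b$, i.e.\ $(a,b,c)\in T$, an open triangle $(\partial\wt X,\tilde p,\tilde q)=(\tilde\gamma,\tilde\mu_1,\tilde\mu_2)$ in $\wt X$ with $\angle\,\tilde p,\angle\,\tilde q\in(0,\pi)$ and these side lengths exists by the discussion preceding Lemma \ref{lem4.3}. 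If $|a-c|=b$, say $a=b+c$, take a $\partial\wt X$-ray $\tilde\mu_1$ and set $\tilde p:=\tilde\mu_1(a)$, $\tilde q:=\tilde\mu_1(c)$, $\tilde\mu_2:=\tilde\mu_1|_{[0,c]}$, letting $\tilde\gamma$ run along $\tilde\mu_1|_{[c,a]}$ from $\tilde p$ to $\tilde q$; this degenerate open triangle has the required side lengths. Hence (\ref{lem4.8-length}) always holds.

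Now suppose this $(\partial\wt X,\tilde p,\tilde q)$ lies in a sector $\wt X(\theta_0)$ having no pair of cut points; write $\ell:=d(p,q)$ and let $\gamma:[0,\ell]\lra X$ be the opposite side of $(\partial X,p,q)$. By Lemma \ref{lem4.7}, for each $s\in(0,\ell)$ there is an open triangle $(\partial\wt X,\tilde p,\tilde\gamma(s))$ in $\wt X(\theta_0)$ corresponding to $(\partial X,p,\gamma(s))$ with matching side lengths, and it is unique up to an isometry (Lemma \ref{lem4.1} together with the no-cut-pair hypothesis); hence its angle $\tilde\alpha(s)$ at $\tilde p$ and its angle $\tilde\beta(s)$ at $\tilde\gamma(s)$ are well defined and depend continuously on $s$. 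Let $S$ be the set of $s\in[0,\ell]$ with $\angle\,p\ge\tilde\alpha(s)$ and $\angle\,\gamma(s)\ge\tilde\beta(s)$, where $\angle\,p$ and $\angle\,\gamma(s)$ are the angles of the triangle $(\partial X,p,\gamma(s))$ at its two vertices; note that $\angle\,p$ is the angle between $\gamma'(0)$ and $-\mu_1'(a)$, hence independent of $s$. It suffices to prove $\ell\in S$, for then the two inequalities at $s=\ell$ are precisely (\ref{lem4.8-angle}). Now $S$ is closed, because $\tilde\alpha,\tilde\beta$ are continuous, $\angle\,p$ is constant, and $s\mapsto\angle\,\gamma(s)$ is upper semicontinuous by \cite[Lemma 2.1]{IT3}; and $S$ is non-empty, since for small $s>0$ the triangle $(\partial X,p,\gamma(s))$ is thin (as in the proof of Lemma \ref{lem3.10}, $\inj(\tilde q_u)$ is bounded below for $u$ in a compact subinterval), so Lemma \ref{lem3.8} yields a model triangle with the angle comparison, which for $s$ small is unique up to an isometry and therefore isometric to $(\partial\wt X,\tilde p,\tilde\gamma(s))$.

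It remains to show $\sup S=\ell$. Suppose $s_0:=\sup S<\ell$, and choose $\epsilon>0$ so small that $(\partial X,\gamma(s_0),\gamma(s_0+\epsilon))$ is thin. Let $(\partial\wt X,\tilde p_2,\tilde q_2)$ in $\wt X$ be the model triangle furnished by Lemma \ref{lem3.8} for it, so that $\alpha\ge\angle\,\tilde p_2$ and $\angle\,\gamma(s_0+\epsilon)\ge\angle\,\tilde q_2$, where $\alpha$ is the angle of $(\partial X,\gamma(s_0),\gamma(s_0+\epsilon))$ at $\gamma(s_0)$; and, since $s_0\in S$, the triangle $(\partial\wt X,\tilde p_1,\tilde q_1):=(\partial\wt X,\tilde p,\tilde\gamma(s_0))$ satisfies $\angle\,p\ge\angle\,\tilde p_1$ and $\beta\ge\angle\,\tilde q_1$, where $\beta$ is the angle of $(\partial X,p,\gamma(s_0))$ at $\gamma(s_0)$. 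As $\gamma'(s_0)$ and $-\gamma'(s_0)$ split the straight angle at $\gamma(s_0)$, we have $\alpha+\beta=\pi$, whence $\angle\,\tilde q_1+\angle\,\tilde p_2\le\pi$; moreover $d(\partial\wt X,\tilde q_1)=d(\partial X,\gamma(s_0))=d(\partial\wt X,\tilde p_2)$. By Lemma \ref{lem4.7}, the triangle $(\partial\wt X,\tilde p,\tilde\gamma(s_0+\epsilon))$ in $\wt X(\theta_0)$ has side lengths $a$, $s_0+\epsilon=d(\tilde p_1,\tilde q_1)+d(\tilde p_2,\tilde q_2)$ and $d(\partial X,\gamma(s_0+\epsilon))=d(\partial\wt X,\tilde q_2)$, so Lemma \ref{lem4.6} applies to $(\partial\wt X,\tilde p_1,\tilde q_1)$, $(\partial\wt X,\tilde p_2,\tilde q_2)$ and this triangle, giving $\tilde\alpha(s_0+\epsilon)\le\angle\,\tilde p_1$ and $\tilde\beta(s_0+\epsilon)\le\angle\,\tilde q_2$. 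Chaining the inequalities, $\angle\,p\ge\angle\,\tilde p_1\ge\tilde\alpha(s_0+\epsilon)$ and $\angle\,\gamma(s_0+\epsilon)\ge\angle\,\tilde q_2\ge\tilde\beta(s_0+\epsilon)$, so $s_0+\epsilon\in S$, contradicting $s_0=\sup S$. Therefore $\ell\in S$, which proves (\ref{lem4.8-angle}). The \textbf{main obstacle} is this extension step, where one must verify the two hypotheses of Lemma \ref{lem4.6} — the angle-sum bound $\angle\,\tilde q_1+\angle\,\tilde p_2\le\pi$, which reduces to $\alpha+\beta=\pi$ and the two thin-triangle comparisons, and the length matching, which is exactly the output of Lemma \ref{lem4.7} — and then correctly glue the thin piece $(\partial X,\gamma(s_0),\gamma(s_0+\epsilon))$ onto the already-controlled triangle $(\partial X,p,\gamma(s_0))$; closedness of $S$ additionally hinges on the upper semicontinuity of the angle at the moving vertex.
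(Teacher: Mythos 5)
Your proof is correct and follows essentially the same continuity-induction scheme as the paper's: establish existence by the intermediate value theorem on $s\mapsto d(\tilde p,\tilde\tau_c(s))$ (or, equivalently, the discussion of the set $T$), then define the set $S$ of good parameters, show it is non-empty via thin triangles and Lemma \ref{lem3.8}, and extend past any supposed supremum by gluing a thin piece with Lemmas \ref{lem4.6} and \ref{lem4.7}. The one place where you are more careful than the paper is the closedness of $S$: the paper simply asserts ``Since $s_1\in S$'' at $s_1=\sup S$, whereas you justify this via continuity of $\tilde\alpha,\tilde\beta$ and the upper semicontinuity of $s\mapsto\angle\,\gamma(s)$ from \cite[Lemma 2.1]{IT3}, which is a welcome tightening of the same argument.
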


\begin{proof}
Since $\mu_{1}$ (resp. $\mu_{2}$) is the $\partial X$-segment to $p$ (resp. to $q$), 
we obtain 
\[
c \le a + b, \quad a \le b + c.
\]
Here we set 
$a := d(\partial X, p)$, $b := d(p, q)$, and $c := d(\partial X, q)$. 
Hence, we have $|a -c| \le b$. 
Choose any point $\tilde{p} \in \wt{X}$ satisfying $d(\partial \wt{X}, \tilde{p}) = a$, and fix the point. 
Since we have $d (\tilde{p}, \tilde{\tau}_{c}(s)) = |a - c|$ at $s = \tilde{y} (\tilde{p})$ and 
$\lim_{s \to \infty} d (\tilde{p}, \tilde{\tau}_{c}(s)) = \infty$,
we may find a number $s_{0} \ge \tilde{y} (\tilde{p})$ such that 
$d (\tilde{p}, \tilde{\tau}_{c}(s_{0})) = b$. 
Here $\tilde{\tau}_{c}$ denotes the arc $\tilde{x} = c$, i.e., $\tilde{\tau}_{c} (s) = (c, s) \in \wt{X}$. 
Putting $\tilde{q} := \tilde{\tau}_{c}(s_{0})$, 
we therefore find a triangle ${\rm OT}(\partial \wt{X}, \tilde{p}, \tilde{q})$ 
satisfying (\ref{lem4.8-length}).\par
Hereafter, we assume that the 
${\rm OT}(\partial \wt{X}, \tilde{p}, \tilde{q})$ lies in the sector $\wt{X}(\theta_{0})$. 
Let $S$ be the set of all $s \in (0, d(p, q))$ such that 
there exists an open triangle 
${\rm OT}(\partial \wt{X}, \tilde{p}, \tilde{\gamma} (s)) \subset \wt{X}(\theta_{0})$ 
corresponding to the triangle ${\rm OT}(\partial X, p, \gamma (s)) \subset X$ satisfying 
(\ref{lem4.8-length}) and (\ref{lem4.8-angle}) for $q = \gamma(s)$ and $\tilde{q} = \tilde{\gamma}(s)$. 
Since ${\rm OT}(\partial X, p, \gamma (\ve)) \subset X$ is a thin open triangle in $X$ 
for any sufficiently small $\ve > 0$, it follows from Lemma \ref{lem3.8} that $S$ is non-empty. 
Since there is nothing to prove in the case where $\sup S = d(p, q)$, 
we then suppose that 
\[
s_{1} := \sup S < d(p, q).
\]
Since $s_{1} \in S$, there exists an open triangle 
${\rm OT}(\partial \wt{X}, \tilde{p}_{1}, \tilde{q}_{1}) \subset \wt{X} (\theta_{0})$ 
corresponding to the triangle ${\rm OT}(\partial X, p, \gamma (s_{1})) \subset X$ such that 
(\ref{lem4.8-length}) and (\ref{lem4.8-angle}) hold 
for $q = \gamma(s_{1})$, $\tilde{p} = \tilde{p}_{1}$, and $\tilde{q} = \tilde{q}_{1}$. 
Choose any $\ve_{1} \in (0, d(p, q) - s_{1})$ in such a way that the open triangle 
${\rm OT}(\partial X, \gamma (s_{1}), \gamma (s_{1} + \ve_{1})) \subset X$ is thin. 
From Lemma \ref{lem3.8}, there exists an open triangle 
${\rm OT}(\partial \wt{X}, \tilde{p}_{2}, \tilde{q}_{2}) \subset \wt{X}$ 
corresponding to the 
${\rm OT}(\partial X, \gamma (s_{1}), \gamma (s_{1} + \ve_{1})) \subset X$ 
such that (\ref{lem4.8-length}) and (\ref{lem4.8-angle}) hold 
for $p = \gamma(s_{1})$, $q = \gamma(s_{1} + \ve_{1})$, $\tilde{p} = \tilde{p}_{2}$, 
and $\tilde{q} = \tilde{q}_{2}$. 
It is clear that the pair of open triangles ${\rm OT}(\partial \wt{X}, \tilde{p}_{1}, \tilde{q}_{1})$ and 
${\rm OT}(\partial \wt{X}, \tilde{p}_{2}, \tilde{q}_{2})$ satisfy 
(\ref{lem4.6-length1}) and (\ref{lem4.6-angle1}) in Lemma \ref{lem4.6}. 
For this pair, it is clear from Lemma \ref{lem4.7} that there exists an open triangle 
${\rm OT}(\partial \wt{X}, \wh{p}, \wh{q}\,) \subset \wt{X} (\theta_{0})$ such that (\ref{lem4.6-length2}) 
holds for $\tilde{p} = \wh{p}$ and $\tilde{q} = \wh{q}$. 
This implies that $s_{1} + \ve_{1} \in S$. 
This therefore contradicts the fact that $s_{1} = \sup S$.
$\qedd$
\end{proof}

\begin{theorem}\label{thm4.9}{\bf (Toponogov's comparison theorem for open triangles)} 
Let $(X, \partial X)$ be a complete connected Riemannian 
$n$-dimensional manifold $X$ with smooth convex boundary $\partial X$ 
whose radial curvature is bounded from below by that of 
a model surface $(\wt{X}, \partial \wt{X})$ with its metric $(\ref{model-metric})$. 
Assume that 
$\wt{X}$ admits a sector $\wt{X}(\theta_{0})$ which has no pair of cut points. 
Then, for every open triangle ${\rm OT}(\partial X, p, q) = (\partial X, p, q\,;\,\gamma, \mu_{1}, \mu_{2})$ in $X$ 
with 
\[
d (\mu_{1}(0), \mu_{2}(0)) < \theta_{0},
\]
there exists an open triangle 
${\rm OT}(\partial \wt{X}, \tilde{p}, \tilde{q}) = 
(\partial \wt{X}, \tilde{p}, \tilde{q}\,;\,\tilde{\gamma}, \tilde{\mu}_{1}, \tilde{\mu}_{2})$ in $\wt{X}(\theta_{0})$ 
such that
\begin{equation}\label{thm4.9-length}
d(\partial \wt{X},\tilde{p}) = d(\partial X, p), \quad 
d(\tilde{p},\tilde{q}) = d(p, q), \quad 
d(\partial \wt{X},\tilde{q}) = d(\partial X, q)
\end{equation}
and that
\begin{equation}\label{thm4.9-angle}
\angle\,p \ge \angle\,\tilde{p}, \quad  
\angle\,q \ge \angle\,\tilde{q}, \quad 
d (\mu_{1}(0), \mu_{2}(0)) \ge d (\tilde{\mu}_{1}(0), \tilde{\mu}_{2}(0)).
\end{equation}
Furthermore, if 
\[
d (\mu_{1}(0), \mu_{2}(0)) = d (\tilde{\mu}_{1}(0), \tilde{\mu}_{2}(0))
\] 
holds, then 
\[
\angle\,p = \angle\,\tilde{p}, \quad  \angle\,q = \angle\,\tilde{q}
\]
hold. 
\end{theorem}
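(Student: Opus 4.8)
Put $a:=d(\partial X,p)$, $b:=d(p,q)$, $c:=d(\partial X,q)$ and, for $t\in[0,1]$, $\varphi(t):=d(\mu_{1}(at),\mu_{2}(ct))$, a Lipschitz function with $\varphi(0)=d(\mu_{1}(0),\mu_{2}(0))$ and $\varphi(1)=b$. Since $\mu_{1}|_{[0,at]}$ and $\mu_{2}|_{[0,ct]}$ are $\partial X$-segments, $(at,\varphi(t),ct)\in T$ for all $t\in(0,1]$, so (with $\Theta$ as in Lemma \ref{lem4.3})
\[
\cD(t):=\Theta(at,\varphi(t),ct)
\]
is well defined and locally Lipschitz, hence continuous, on $(0,1]$ by Lemma \ref{lem4.3}. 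Since $\wt{X}(\theta_{0})$ has no pair of cut points, the sub-segments of the totally geodesic $\partial\wt{X}$ of $\tilde{y}$-length $<\theta_{0}$ are minimizing; consequently $\Theta$ extends continuously to $(0,\varphi(0),0)$, and combining Lemma \ref{lem3.8} for the thin triangles $(\partial X,\mu_{1}(at),\mu_{2}(ct))$ (small $t$) with the quadrilateral inequality applied to the feet of their comparison triangles, one obtains $\lim_{t\downarrow 0}\cD(t)=\varphi(0)=d(\mu_{1}(0),\mu_{2}(0))$, which is $<\theta_{0}$ by hypothesis. Everything will follow once $\cD$ is shown to be non-increasing on $(0,1]$: then $\cD(1)=\Theta(a,b,c)<\theta_{0}$, so the length-matching model triangle provided by Proposition \ref{lem4.8} for $(\partial X,p,q)$ has $\tilde{y}$-span $<\theta_{0}$ and — its opposite side being $\tilde{y}$-monotone by the Clairaut relation underlying Lemma \ref{lem4.1} — may be placed inside $\wt{X}(\theta_{0})$; Proposition \ref{lem4.8} then yields $\angle\,p\ge\angle\,\tilde{p}$ and $\angle\,q\ge\angle\,\tilde{q}$, while $d(\tilde{\mu}_{1}(0),\tilde{\mu}_{2}(0))\le\cD(1)\le\cD(0^{+})=d(\mu_{1}(0),\mu_{2}(0))$ is the third inequality in (\ref{thm4.9-angle}).

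\textbf{Monotonicity of $\cD$: a bootstrap.} Let $s^{*}:=\sup\{s\in(0,1]:\cD<\theta_{0}\text{ on }(0,s]\}$, which is positive since $\cD(0^{+})<\theta_{0}$ and $\cD$ is continuous. For $t\in(0,s^{*})$ the model triangle of Proposition \ref{lem4.8} for $(\partial X,\mu_{1}(at),\mu_{2}(ct))$ has $\tilde{y}$-span $\cD(t)<\theta_{0}$, hence — after a $\tilde{y}$-translation, using the no-cut-pair property exactly as in the proof of Lemma \ref{lem4.6} — lies, opposite side included, in $\wt{X}(\theta_{0})$, where it is the unique such triangle and satisfies the angle inequalities by Proposition \ref{lem4.8}. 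Thus the hypothesis of Lemma \ref{lem4.4} is met on $(0,s^{*})$, and its proof, being local in $t$, gives $\cD$ non-increasing on $(0,s^{*})$. If $s^{*}<1$, then $\cD(s^{*})=\lim_{t\uparrow s^{*}}\cD(t)\le\cD(0^{+})<\theta_{0}$, so $\cD<\theta_{0}$ near $s^{*}$ by continuity, contradicting maximality. Hence $s^{*}=1$, $\cD$ is non-increasing on $(0,1]$, and the first part of the theorem follows as above.

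\textbf{The equality case.} Assume $d(\mu_{1}(0),\mu_{2}(0))=d(\tilde{\mu}_{1}(0),\tilde{\mu}_{2}(0))$, the common value being positive (otherwise everything is degenerate). Then the chain $d(\tilde{\mu}_{1}(0),\tilde{\mu}_{2}(0))\le\cD(1)\le\cD(0^{+})=d(\mu_{1}(0),\mu_{2}(0))$ collapses, so $\cD(1)=\cD(0^{+})$ and, $\cD$ being non-increasing, $\cD$ is constant on $(0,1]$. Realize $(\partial\wt{X},\tilde{p},\tilde{q})$ inside $\wt{X}(\theta_{0})$, extend $\tilde{\mu}_{1},\tilde{\mu}_{2}$ to $\partial\wt{X}$-rays, and set $\Psi(t):=d(\tilde{\mu}_{1}(at),\tilde{\mu}_{2}(ct))$; since these points lie in $\wt{X}(\theta_{0})$ (no cut pair), $\Psi$ is smooth near $t=1$. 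For each $t$ the sub-configuration $(\partial\wt{X},\tilde{\mu}_{1}(at),\tilde{\mu}_{2}(ct))$ has $\tilde{y}$-span $d(\tilde{\mu}_{1}(0),\tilde{\mu}_{2}(0))=\cD(1)=\cD(t)=\Theta(at,\varphi(t),ct)$ and also $\Theta(at,\Psi(t),ct)$; as $\Theta(a',\,\cdot\,,c')$ is strictly increasing by Lemma \ref{lem4.1}, $\Psi(t)=\varphi(t)$ for all $t$. Hence $\varphi\equiv\Psi$ on $(0,1]$, so the left derivative $\varphi'_{-}(1)=\Psi'(1)$ exists; by the first variation formula in $\wt{X}$ along the unique minimal segment $\tilde{\gamma}$ (normalization as in (\ref{lem4.4-1})--(\ref{lem4.4-2})), $\varphi'_{-}(1)=a\cos(\angle\,\tilde{p})+c\cos(\angle\,\tilde{q})$, whereas the first variation formula in $X$ along $\gamma$ gives the one-sided inequality $\varphi'_{-}(1)\ge a\cos(\angle\,p)+c\cos(\angle\,q)$ (several minimal segments may join $p$ to $q$). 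Therefore $a\cos(\angle\,\tilde{p})+c\cos(\angle\,\tilde{q})\ge a\cos(\angle\,p)+c\cos(\angle\,q)$; combined with $\angle\,p\ge\angle\,\tilde{p}$, $\angle\,q\ge\angle\,\tilde{q}$ and the strict monotonicity of $\cos$ on $[0,\pi]$, this forces $\angle\,p=\angle\,\tilde{p}$ and $\angle\,q=\angle\,\tilde{q}$.

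\textbf{Where the difficulty lies.} The crux is the bootstrap step: one must be sure that as soon as the $\tilde{y}$-span of a model triangle drops below $\theta_{0}$, the \emph{whole} triangle — above all its opposite side — can be accommodated inside the sector $\wt{X}(\theta_{0})$, and this is exactly where the assumption that $\wt{X}(\theta_{0})$ contains no pair of cut points is used (as already in Lemma \ref{lem4.6}); without it neither the angle comparison through Proposition \ref{lem4.8} nor the monotonicity of $\cD$ through Lemma \ref{lem4.4} would be available. The only other delicate point is the careful use of one-sided first derivatives in the equality case.
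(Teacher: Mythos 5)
Your proposal follows the paper's route very closely: the comparison function $\cD(t)=\Theta(at,\varphi(t),ct)$, the limit $\cD(0^{+})=d(\mu_{1}(0),\mu_{2}(0))<\theta_{0}$, a bootstrap showing $\cD<\theta_{0}$ and $\cD$ non-increasing on $(0,1]$ via Lemma \ref{lem4.4} and Proposition \ref{lem4.8}, and the equality case via constancy of $\cD$ and the first variation formula as in the proof of Lemma \ref{lem4.4}. Your bootstrap and equality-case arguments are spelled out in more detail than the paper's, and they are correct.

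One intermediate justification is off, however. The triangles $(\partial X,\mu_{1}(at),\mu_{2}(ct))$ for small $t>0$ are \emph{not} thin open triangles in the sense of Definition \ref{def3.3}: their opposite side $\gamma_{t}$ has length roughly $d(\mu_{1}(0),\mu_{2}(0))$, which need not be small, so (TOT-1) and (TOT-2) can fail and Lemma \ref{lem3.8} is not applicable. The needed statement $\lim_{t\downarrow 0}\cD(t)=d(\mu_{1}(0),\mu_{2}(0))$ is instead obtained exactly as in the paper's (\ref{thm4.9-1})--(\ref{thm4.9-3}): the first assertion of Proposition \ref{lem4.8} produces a length-matching comparison triangle for every $t$ without any thinness or sector hypothesis, and the triangle inequality in $X$ and in $\wt{X}$ gives $|\cD(t)-d(\mu_{1}(0),\mu_{2}(0))|\le 2(a+c)t$. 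Relatedly, the remark that subarcs of $\partial\wt{X}$ of $\tilde{y}$-length $<\theta_{0}$ are minimizing, and the appeal to a continuous extension of $\Theta$ to $(0,\varphi(0),0)$, are both unnecessary for the argument and dubious in general (when $m<1$ off the boundary, interior paths can be shorter than the boundary arc). Once the limit of $\cD$ is established by the triangle-inequality estimate, the rest of your proof goes through as written.
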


\begin{proof}
Since the claim of our theorem is trivial for degenerate open triangles, 
we assume that the open triangle ${\rm OT}(\partial X, p, q)$ is not degenerate. 
Here, we make use of the same notations used in Lemma \ref{lem4.4} and its proof.\par
Applying the triangle inequality to the open triangle 
${\rm OT}(\partial X, \mu_{1}(at), \mu_{2}(ct)) \subset X$, we see
\begin{equation}\label{thm4.9-1}
\varphi (t) - (a + c)t \le d(\mu_{1}(0), \mu_{2}(0)) \le \varphi (t) + (a + c)t
\end{equation}
for all $t \in (0, 1]$, where $a := d(\partial X, p)$, $c := d(\partial X, q)$, and 
$\varphi (t) := d(\mu_{1}(at), \mu_{2}(ct))$. 
By the first assertion of Proposition \ref{lem4.8}, for each $t \in (0, 1]$, 
we may find an open triangle 
\[
{\rm OT}(\partial \wt{X}, \tilde{\mu}_{1}^{(t)}(at), \tilde{\mu}_{2}^{(t)}(ct)) 
= 
(\partial \wt{X}, \tilde{\mu}_{1}^{(t)}(at), \tilde{\mu}_{2}^{(t)}(ct)\,;\,
\tilde{\gamma}_{t}, \tilde{\mu}_{1}^{(t)}|_{[0, \,at]}, \tilde{\mu}_{2}^{(t)}|_{[0, \,ct]})
\]
in $\wt{X}$ which has the same side lengths as the ${\rm OT}(\partial X, \mu_{1}(at), \mu_{2}(ct))$. 
Thus, as well as (\ref{thm4.9-1}), we see 
\begin{equation}\label{thm4.9-2}
\varphi (t) - (a + c)t \le d(\tilde{\mu}_{1}^{(t)}(0), \tilde{\mu}_{2}^{(t)}(0)) \le \varphi (t) + (a + c)t
\end{equation}
for all $t \in (0, 1]$. 
From (\ref{thm4.9-1}) and (\ref{thm4.9-2}), we obtain 
\begin{equation}\label{thm4.9-3}
d(\mu_{1}(0), \mu_{2}(0)) - 2(a + c)t \le 
d(\tilde{\mu}_{1}^{(t)}(0), \tilde{\mu}_{2}^{(t)}(0)) \le 
d(\mu_{1}(0), \mu_{2}(0)) + 2(a + c)t
\end{equation}
for all $t \in (0, 1]$. 
Since $d(\mu_{1}(0), \mu_{2}(0)) < \theta_{0}$, it follows from (\ref{thm4.9-3}) that 
there exists a number $\ve_{1} > 0$ such that 
\begin{equation}\label{thm4.9-4}
d(\tilde{\mu}_{1}^{(t)}(0), \tilde{\mu}_{2}^{(t)}(0)) < \theta_{0}
\end{equation}
holds on $(0, \ve_{1})$. 
Hence, 
\begin{equation}\label{thm4.9-5}
{\rm OT}(\partial \wt{X}, \tilde{\mu}_{1}^{(t)}(at), \tilde{\mu}_{2}^{(t)}(ct)) \subset \wt{X} (\theta_{0})
\end{equation}
for each $t \in (0, \ve_{1})$. 
By the second assertion of Proposition \ref{lem4.8}, we get 
\begin{equation}\label{thm4.9-6}
\angle\,\mu_{1}(at) \ge \angle\,\tilde{\mu}_{1}^{(t)} (at), \quad 
\angle\,\mu_{2}(ct) \ge \angle\,\tilde{\mu}_{2}^{(t)} (ct)
\end{equation}
for each $t \in (0, \ve_{1})$. 
Since $\wt{X}(\theta_{0})$ has no pair of cut points, 
it follows from (\ref{thm4.9-5}) that the opposite side $\tilde{\gamma}_{t}$ of 
${\rm OT}(\partial \wt{X}, \tilde{\mu}_{1}^{(t)}(at), \tilde{\mu}_{2}^{(t)}(ct))$ 
to $\partial \wt{X}$ is unique for all $t \in (0, \ve_{1})$. 
From Lemma \ref{lem4.4} and (\ref{thm4.9-4}), 
it follows that the function $\cD (t) = d(\tilde{\mu}_{1}^{(t)}(0), \tilde{\mu}_{2}^{(t)}(0))$ is non-increasing 
on $(0, \ve_{1})$ and $\cD (t) < \theta_{0}$ holds on $(0, \ve_{1})$. 
Thus, we finally see that $\cD (t)$ is non-increasing on $(0, 1]$, $\cD (t) < \theta_{0}$ holds on $(0, 1]$, 
and (\ref{thm4.9-6}) holds on $(0, 1]$. 
In particular, setting 
\[
{\rm OT}(\partial \wt{X}, \tilde{p}, \tilde{q}) 
= (\partial \wt{X}, \tilde{p}, \tilde{q}\,;\,\tilde{\gamma}, \tilde{\mu}_{1}, \tilde{\mu}_{2}) 
:= {\rm OT}(\partial \wt{X}, \tilde{\mu}_{1}^{(1)}(a), \tilde{\mu}_{2}^{(1)}(c)) 
\]
in $\wt{X} (\theta_{0})$, we get 
\begin{equation}\label{thm4.9-7}
\angle\,p \ge \angle\,\tilde{p}, \quad  
\angle\,q \ge \angle\,\tilde{q}.
\end{equation}
Moreover, by (\ref{thm4.9-3}), 
\begin{equation}\label{thm4.9-8}
\cD(t) = d(\tilde{\mu}_{1}^{(t)}(0), \tilde{\mu}_{2}^{(t)}(0)) \le d(\mu_{1}(0), \mu_{2}(0)) + 2(a + c)t
\end{equation}
holds on $(0, 1]$. 
Since $\cD (t)$ is non-increasing on $(0, 1]$, we have, by (\ref{thm4.9-8}), 
\[
\cD(1) = 
d(\tilde{\mu}_{1}(0), \tilde{\mu}_{2}(0)) 
\le d(\mu_{1}(0), \mu_{2}(0)) + 2(a + c)t
\]
on $(0, 1]$. 
Hence we get 
\begin{equation}\label{thm4.9-9}
d(\mu_{1}(0), \mu_{2}(0)) \ge d(\tilde{\mu}_{1}(0), \tilde{\mu}_{2}(0)). 
\end{equation}
By (\ref{thm4.9-7}) and (\ref{thm4.9-9}), 
the open triangle 
${\rm OT}(\partial \wt{X}, \tilde{p}, \tilde{q})$ is therefore an open triangle 
satisfying conditions (\ref{thm4.9-length}) and (\ref{thm4.9-angle}).\par 
Assume that 
$d(\mu_{1}(0), \mu_{2}(0)) = d(\tilde{\mu}_{1}(0), \tilde{\mu}_{2}(0)) = \cD(1)$ holds. 
By (\ref{thm4.9-8}), 
\[
\cD(t) \le 2(a + c)t + \cD(1)
\]
holds on $(0, 1]$. 
Thus, we get 
\[
\lim_{t \downarrow 0} \cD(t) \le \cD(1).
\]
Hence, $\cD(t)$ must be constant on $(0, 1]$, 
since $\cD (t)$ is non-increasing on $(0, 1]$. 
From the proof of Lemma \ref{lem4.4}, it follows that 
$\angle\,\mu_{1}(at) = \angle\,\tilde{\mu}_{1}^{(t)} (at)$ 
and 
$\angle\,\mu_{2}(ct) = \angle\,\tilde{\mu}_{2}^{(t)} (ct)$ 
hold on $(0, 1]$. 
In particular, we obtain $\angle\,p = \angle\,\tilde{p}$ and $\angle\,q = \angle\,\tilde{q}$.
$\qedd$
\end{proof}

\medskip

\begin{center}
Department of Mathematics\\
Tokai University\\
Hiratsuka City, Kanagawa Pref.\\ 
259\,--\,1292\\
Japan

\bigskip

{\small
$\bullet$\,our e-mail addresses\,$\bullet$

\bigskip 
\textit{e-mail of Kondo} \,:

\medskip
{\tt keikondo@keyaki.cc.u-tokai.ac.jp}

\medskip
\textit{e-mail of Tanaka}\,:

\medskip
{\tt tanaka@tokai-u.jp}
}
\end{center}

\end{document}